\def\@tocline#1#2#3#4#5#6#7{\relax
\ifnum #1>\c@tocdepth 
\else
\par \addpenalty\@secpenalty\addvspace{#2}%
\begingroup \hyphenpenalty\@M
\@ifempty{#4}{%
\@tempdima\csname r@tocindent\number#1\endcsname\relax
}{%
\@tempdima#4\relax
}%
\parindent\z@ \leftskip#3\relax \advance\leftskip\@tempdima\relax
\rightskip\@pnumwidth plus4em \parfillskip-\@pnumwidth
#5\leavevmode\hskip-\@tempdima
\ifcase #1
\or\or \hskip 2em \or \hskip 2em \else \hskip 3em \fi%
#6\nobreak\relax
\dotfill\hbox to\@pnumwidth{\@tocpagenum{#7}}\par
\nobreak
\endgroup
\fi}
\theoremstyle{plain}
\newtheorem{theorem}{Theorem}[section]
\newtheorem{lemma}[theorem]{Lemma}
\newtheorem{corollary}[theorem]{Corollary}
\newtheorem{proposition}[theorem]{Proposition}
\theoremstyle{definition}
\newtheorem{conventions}[theorem]{Conventions}
\newtheorem{notation}[theorem]{Notation}
\newtheorem{remark}[theorem]{Remark}
\newtheorem{definition}[theorem]{Definition}
\numberwithin{equation}{section}
\def\<{\langle}
\def\>{\rangle} 
\def\-{\overline} 
\def\~{\widetilde}
\def\^{\widehat}
\def\@{\mathcal}
\def\!{\mathscr}
\def\#{\mathbb}
\def\_{\underline}
\DeclareMathOperator{\Fields}{Fields}
\DeclareMathOperator{\colim}{colim}
\DeclareMathOperator{\Spec}{Spec}
\DeclareMathOperator{\Gal}{Gal}
\DeclareMathOperator{\Cor}{Cor}
\DeclareMathOperator{\Ker}{Ker}
\DeclareMathOperator{\Coker}{Coker}
\DeclareMathOperator{\Img}{Im}
\DeclareMathOperator{\CH}{CH}
\DeclareMathOperator{\Div}{div}
\newcommand{\KM}{{\rm K}^{\rm M}} 		
\begin{document}

\title{Tame and curve-tame cohomology of $\mathbb A^1$-invariant \'etale sheaves}

\author{Sandeep S}
\address{School of Mathematics, Tata Institute of Fundamental Research, Homi Bhabha Road, Colaba, Mumbai 400005, India.}
\email{sandeeps@math.tifr.res.in}

\author{Anand Sawant}
\address{School of Mathematics, Tata Institute of Fundamental Research, Homi Bhabha Road, Colaba, Mumbai 400005, India.}
\email{asawant@math.tifr.res.in}
\date{}
\thanks{The authors acknowledge the support of SERB MATRICS grant MTR/2023/000228, India DST-DFG Project on Motivic Algebraic Topology DST/IBCD/GERMANY/DFG/2021/1 and the Department of Atomic Energy, Government of India, under project no. 12-R\&D-TFR-5.01-0500.}

\begin{abstract}
We extend the definition of the unramified curve-tame cohomology groups to $\mathbb{A}^1$-invariant \'etale sheaves under some additional hypotheses. We define a pairing of this group with the Suslin homology satisfying desirable properties and using this, we show that the unramified curve-tame cohomology of a smooth geometrically connected variety over a field of positive characteristic agrees with the cohomology of the base field. 
\end{abstract}

\maketitle
\tableofcontents

\setlength{\parskip}{4pt plus1pt minus1pt}
\raggedbottom

\section{Introduction}

The notion of homotopy invariance is the cornerstone of Voevodsky's construction of the triangulated category of motives \cite{Voevodsky-DM} in the sense that the category of homotopy invariant sheaves with transfers on smooth schemes a field is used as an essential building block.  A certain special class of homotopy invariant sheaves with transfers, called \emph{homotopy modules}, was identified with \emph{cycle modules} in the sense of \cite{Rost1996} in the Ph.D. thesis of D\'eglise \cite{Deglise} (see also \cite{Deglise-MG}).  Rost's theory of cycle modules gives an alternate approach and a generalization of classical intersection theory and can be seen as an axiomatization of fundamental properties of Milnor $K$-theory. 

One of the motivations for this work is to develop some tools needed to attack the question of Rost nilpotence for cycles having torsion primary to the characteristic $p>0$ of the base field, using a combination of the methods in \cite{Rosenschon-Sawant}, \cite{Diaz} and \cite{Gille-Surfaces}.  A precise obstruction to the Rost nilpotence principle for smooth projective varieties of dimension $\geq 3$ can be explicitly written down in terms of actions of correspondences on certain cohomology groups of \'etale motivic complexes $\#Q_\ell/\#Z_\ell(q)$ (see \cite[Remark 4.7]{Rosenschon-Sawant} and \cite[Theorem 2.4]{Diaz}), where $\ell$ runs through all the primes.  The obstruction in the case of cycles having torsion primary to the characteristic is determined by the analogues of Rost-style cycle cohomology groups associated with functors of the form $H^{q+i}(-,\#Q_p/\#Z_p(q))$, for $i=0,1$. One of the main examples of an interesting reciprocity sheaf that is not $\#A^1$-invariant is the logarithmic de Rham-Witt sheaf of Illusie \cite{Illusie}.  The importance of this example is that over the \'etale site, the logarithmic de Rham-Witt sheaf $\nu_r(q)$ in weight $q$ can be identified up to a shift with the \'etale motivic complex $\#Z/p^r\#Z(q)$ in weight $q$, by the work of Geisser and Levine \cite{Geisser-Levine-Crelle}.  Therefore, it is an interesting question to investigate whether the functors $H^{q+i}(-,\#Q_p/\#Z_p(q))$ or $H^{q+i}(-,\#Z/p^r\#Z(q))$ for $i=0$ admit a structure similar to cycle modules.  

In the case $i=0$, the above functor can be identified with Rost's cycle complex for the cycle module corresponding to mod-$p^r$ Milnor $K$-theory under the isomorphism $H^{q}_{\text{\'et}}(F, \#Z/p^r\#Z(q)) \simeq \KM_q(F)/p^r$ for any field $F$ obtained by Bloch-Gabber-Kato (see \cite{Bloch-Kato}).  In the case $i=1$, it is known that the groups $H^1(-,\nu_r(q)) = H^{q+1}(-, \#Z/p^r\#Z(q))$ do not form a cycle module as the residue map is not defined for all valuations (see \cite{Totaro}) and in fact, $\#A^1$-invariance fails.  However, we verify in Section \ref{section tame cohomology} that this \emph{partial cycle premodule data} does satisfy all the cycle premodule and cycle module axioms (see Theorem \ref{intro thm H1} below). One may attempt to define functoriality in the style of Rost on the Kato complexes associated with de Rham-Witt sheaves; however, such an attempt fails to define nontrivial pullback maps at the level of complexes. 

The first main result of the paper investigates the partial cycle module structure on the functor $H^1(-, \@G)$ for an \'etale $\#A^1$-invariant sheaf over a field of characteristic $p>0$ under additional hypotheses.  

\begin{theorem}(see Theorem \ref{theorem existence_Kato_complex} for a precise version)
\label{intro thm H1}
Let $k$ be a field of characteristic $p>0$ and let $\mathcal{G}$ be an $\mathbb{A}^1$-invariant \'etale sheaf on $Sm_k$ satisfying Conventions \ref{additional_assumptions_on_G}. Then the groups $M_i(K)=H^1_{\text{\'et}}(K,\mathcal{G}_i)$ for $i\in \mathbb{Z}$ (where $\@G_i$ denotes the $i$th twist of $\@G$) admit a partial cycle premodule structure satisfying all the cycle premodule axioms.  Consequently, for $q\ge 0$, there exists a Kato Complex
\[
C(X,\mathcal{G}_q): 0\rightarrow\bigoplus_{x\in X_{(d)}} H^1_{\text{\'et}}(k(x),\mathcal{G}_{d+q})\xrightarrow{d} \cdots \xrightarrow{d}  \bigoplus_{x\in X_{(0)}}H^1_{\text{\'et}}(k(x),\mathcal{G}_q)\rightarrow 0.
\] 
\end{theorem}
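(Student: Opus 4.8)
The plan is to equip the family $M_\bullet=(M_i)_{i\in\mathbb{Z}}$, where $M_i(K):=H^1_{\text{\'et}}(K,\mathcal{G}_i)$, with Rost's cycle premodule data $\mathrm{D1}$--$\mathrm{D4}$, taking the residue data $\mathrm{D4}$ to be defined only for the \emph{divisorial} discrete valuations $v$ of a field $K$ (those admitting a model as a codimension-one point of a normal variety of finite type over $k$); this restriction is the meaning of ``partial'' in the statement, and it is forced because the purity isomorphism underlying $\partial_v$, although available in this geometric situation by virtue of $\mathbb{A}^1$-invariance, is not available for every discrete valuation of $K$ in characteristic $p$ (compare the failure discussed in \cite{Totaro}). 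Concretely: $\mathrm{D1}$ is pullback in \'etale cohomology along $\Spec L\to\Spec K$; $\mathrm{D2}$ is Galois-cohomological corestriction for a finite separable extension, extended to an arbitrary finite extension by factoring it through its separable closure and using that a purely inseparable extension induces an isomorphism on \'etale cohomology, with the standard normalisation for the purely inseparable part (cf.\ \cite{Geisser-Levine-Crelle} for the analogous normalisation in the mod-$p$ motivic setting); $\mathrm{D3}$ is obtained from the $\mathbb{G}_m$-multiplication $\mathbb{G}_m\wedge\mathcal{G}_i\to\mathcal{G}_{i+1}$ provided by the twist structure of Conventions \ref{additional_assumptions_on_G}, combined with the cup product, giving $K^\times\otimes M_i(K)\to M_{i+1}(K)$, whose $\KM_\bullet$-linearity (in particular the Steinberg relation) is exactly the statement that $\mathcal{G}_\bullet$ is a sheaf of $\KM_\bullet$-modules, a consequence of the homotopy-module structure carried by an $\mathbb{A}^1$-invariant sheaf (cf.\ \cite{Deglise}, \cite{Deglise-MG}); iterating yields the $\KM_n$-action for all $n$. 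Finally $\mathrm{D4}$: for an admissible trait with generic point $\Spec K$ and closed point $\Spec\kappa(v)$, the residue $\partial_v\colon M_i(K)\to M_{i-1}(\kappa(v))$ is the connecting homomorphism of the localisation sequence composed with the purity isomorphism $H^2_{\kappa(v)}(\mathcal{O}_v,\mathcal{G}_i)\cong H^1_{\text{\'et}}(\kappa(v),\mathcal{G}_{i-1})$, the latter coming from $\mathbb{A}^1$-invariance (the $\mathbb{G}_m$-delooping $\mathcal{G}_i=(\mathcal{G}_{i+1})_{-1}$, equivalently the \'etale form of the Gersten/Cousin resolution of strictly $\mathbb{A}^1$-invariant sheaves, as in Conventions \ref{additional_assumptions_on_G}).

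With the data in place, the premodule axioms $\mathrm{R1a}$--$\mathrm{R3f}$ are checked in three groups. The compatibilities of $\res$ and $\Cor$ ($\mathrm{R1a}$--$\mathrm{R1d}$) reduce to functoriality of \'etale pullback, the Galois-cohomological projection formula, and base change for finite morphisms of traits. The interactions with the $\KM_\bullet$-action ($\mathrm{R2a}$--$\mathrm{R2e}$) follow from associativity and functoriality of cup products together with the $\KM_\bullet$-module structure on $\mathcal{G}_\bullet$. The residue axioms $\mathrm{R3a}$--$\mathrm{R3f}$ are the substantial part: $\mathrm{R3d}$ (vanishing of $\partial_v$ on classes unramified at $v$) is built into the definition; $\mathrm{R3a}$ and $\mathrm{R3c}$ (behaviour of $\partial_v$ under $\res$ along (un)ramified extensions and under $\Cor$) follow by applying functoriality of the localisation sequence and of the purity isomorphism to morphisms of traits; and $\mathrm{R3b}$ together with $\mathrm{R3e}$ --- the formulas $\partial_v(\{u\}\cdot\rho)=-\{\bar u\}\cdot\partial_v(\rho)$ for a unit $u$, and $\partial_v(\{\pi\}\cdot\res(\bar\rho))=\bar\rho$ for a uniformiser $\pi$ and $\bar\rho\in M_{i-1}(\kappa(v))$ --- are obtained by transporting, through the $\KM_\bullet$-module structure, the classical symbol computation of residues in Milnor $K$-theory (cf.\ \cite{Rost1996}; see also the de Rham--Witt analogues, cf.\ \cite{Illusie}), after passing to the henselian trait where the localisation sequence splits. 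I expect the purity isomorphism in $\mathrm{D4}$ and the symbol computation behind $\mathrm{R3b}$/$\mathrm{R3e}$ to be the main obstacle: in equicharacteristic $p$ the sheaf $\mathcal{G}$ need not be a twist of $\mu_{p^r}$, so Gabber's absolute purity does not apply directly, and one must genuinely use $\mathbb{A}^1$-invariance --- via the Gersten-type resolution guaranteed by Conventions \ref{additional_assumptions_on_G} --- both to construct $\partial_v$ and to identify $\partial_v(\{\pi\}\cdot(-))$ with the specialisation map; a secondary technical point is the handling of transfers along purely inseparable residue extensions, which occur because $k(x)/k(y)$ need not be separable.

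The Kato complex then follows from Rost's and Kato's formalism. For $X$ of finite type over $k$ with $\dim X=d$, and for $x\in X_{(j)}$, $y\in X_{(j-1)}\cap\overline{\{x\}}$, let $\nu\colon\widetilde Z\to Z:=\overline{\{x\}}$ be the normalisation and set $\partial^x_y:=\sum_{\widetilde y\mapsto y}\Cor_{\kappa(\widetilde y)/\kappa(y)}\circ\partial_{\widetilde y}$, the sum over the finitely many points $\widetilde y$ of $\widetilde Z$ above $y$; each $\partial_{\widetilde y}$ is the residue at a codimension-one point of the normal $k(x)$-variety $\widetilde Z$, hence at a divisorial valuation, so it lies in the domain of the partial premodule, and the differential of $C(X,\mathcal{G}_q)$ is $d:=\sum\partial^x_y$. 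Finite support of divisors holds because any class in $H^1_{\text{\'et}}(k(x),\mathcal{G}_{j+q})$ extends, by continuity of \'etale cohomology, to a class over some dense open of $\widetilde Z$, on which it is unramified by $\mathrm{R3d}$; and closedness $d\circ d=0$ is Rost's reduction \cite{Rost1996} to the case of two-dimensional excellent local rings, where it is a formal consequence of $\mathrm{R1}$--$\mathrm{R3}$ and again involves only residues at divisorial valuations. Hence $C(X,\mathcal{G}_q)$ is a complex, as asserted.
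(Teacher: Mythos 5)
There is a genuine gap, and it sits exactly at the step you yourself flag as the main obstacle: the construction of $\mathrm{D4}$. You propose to define $\partial_v$ as the connecting map of a localisation sequence composed with a purity isomorphism $H^2_{\kappa(v)}(\mathcal{O}_v,\mathcal{G}_i)\cong H^1_{\text{\'et}}(\kappa(v),\mathcal{G}_{i-1})$, asserting that this purity follows from $\mathbb{A}^1$-invariance and from a ``Gersten-type resolution guaranteed by Conventions \ref{additional_assumptions_on_G}''. But Conventions \ref{additional_assumptions_on_G} do not assert any Gersten or purity statement: they are the vanishing hypothesis $H^1_{\text{\'et}}(K^{sh},\mathcal{G}_{i+1})=0$ when $(k(v):k(v)^p)\le p^i$, and your argument never uses it. More seriously, the purity you invoke is false in general for the sheaves in question: for $\mathcal{G}=\nu_r(q)$ the functor $H^1(-,\nu_r(q))$ is not $\mathbb{A}^1$-invariant and the residue fails to exist on wildly ramified classes (this is precisely Totaro's point, recalled in Section \ref{section log drw}). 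Your reading of ``partial'' as ``defined only at divisorial valuations'' therefore misdiagnoses the obstruction: even for a divisorial $v$, $H^1(K_v,\mathcal{G}_{i+1})$ contains classes with no residue; the genuine constraint is wild ramification, controlled by the $p$-rank of the residue field, not the geometric origin of $v$.

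The paper's route avoids purity entirely. The residue is defined first on the tame subgroup $H^1_{\mathrm{tame},v}(K,\mathcal{G}_{i+1})$, identified via the Hochschild--Serre spectral sequence for $K_v\to K_v^{sh}$ with $H^1(k(v),H^0(K_v^{sh},\mathcal{G}_{i+1}))$, and $\partial_v$ is obtained by applying $H^1(k(v),-)$ to the residue of the honest cycle module $H^0(-,\mathcal{G}_\ast)$ of D\'eglise (Definition \ref{tameResidue}). The vanishing in Conventions \ref{additional_assumptions_on_G} then forces the tame subgroup to be all of $H^1(K,\mathcal{G}_{i+1})$ (Lemma \ref{lemma tame=all}), which is what makes $\mathrm{D4}$ everywhere defined and the Kato complex exist. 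The axioms $\mathrm{R3}$ are verified by explicit cocycle manipulations reducing to the corresponding identities for $H^0(-,\mathcal{G}_\ast)$ (Proposition \ref{prop R3}, Lemma \ref{corestrictions-residues}), not by functoriality of a localisation sequence. To repair your argument you would need to replace the purity step by this Hochschild--Serre construction, or else prove the purity isomorphism you assume, which for $p$-torsion coefficients in characteristic $p$ is not available. The remaining parts of your proposal ($\mathrm{D1}$--$\mathrm{D3}$, the finiteness of support, and $d\circ d=0$ via Rost's reduction) are consistent with what the paper does.
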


The main point in the proof of Theorem \ref{intro thm H1} is the construction of appropriate residue maps. In the case of the logarithmic K\"{a}hler differentials, Izhboldin in \cite{Izhboldin} and Totaro in \cite{Totaro} define a residue map from a certain subgroup of the first cohomology group, called the \textit{tame} subgroup. In \cite{Kato1986}, Kato defined a residue map for the logarithmic de Rham-Witt sheaves under some restrictions on the fields which imply that the tame subgroup coincides with the first cohomology group. The two definitions of residue maps agree (see Lemma \ref{lemma residues agree}). We extend the idea of Totaro to consider general $\mathbb{A}^1$-invariant \'{e}tale sheaves with transfers and define a residue map along with several functoriality properties.

As of now, the logarithmic de Rham-Witt sheaves $\nu_r(q)$ are the only examples of \'etale sheaves satisfying the hypotheses of Theorem \ref{intro thm H1} that we are aware of.  Our work can be seen as a study of the underlying abstract formalism behind the results of \cite{Totaro} and \cite{Otabe2023}. 

In \cite{Otabe2023}, Otabe defines a further refinement of the tame cohomology called the \emph{unramified curve-tame cohomology} and uses it to give a positive answer to a question of \cite{Auel-Bigazzi-Boehning}, asking if the natural map $H^1(k, \Omega^{i}_{log}) \rightarrow H^1_{\mathrm{ur}}(k(X)/k, \Omega^i_{log})$ is an isomorphism under certain conditions. To this end, Otabe defines a pairing of the unramified curve-tame cohomology with the Suslin Homology and proves an analogous result with the unramified curve-tame cohomology replacing the unramified cohomology. Specializing to the case where the two agree, he gives a positive answer to the question.

In Sections \ref{section unramified} and \ref{section unramified ct}, we extend the definition of Otabe to the case of $\mathbb{A}^1$-invariant \'{e}tale sheaves with transfers satisfying the \emph{specialization property} and whose tame unramified cohomology is $\mathbb{A}^1$-invariant. In Section \ref{section pairing}, we generalize the pairing defined by Otabe for log-K\"ahler differentials as follows (see Proposition \ref{well_definedness_of_pairing}).

\begin{proposition}
Let $X$ be a smooth geometrically connected variety over $k$. Let $\mathcal{G}$ be such that the projection map $\mathbb{A}^1_k \rightarrow \Spec k$ induces an isomorphism $H^1(k,\mathcal{G}) \xrightarrow{\sim} H^1_{\mathrm{tame, ur}}(\mathbb{A}^1_k/k, \mathcal{G})$. Then we have a pairing\[
\langle-,-\rangle: H^S_0(X) \times H^1_{\mathrm{ct, ur}}(X/k, \mathcal{G}) \rightarrow H^1(k, \mathcal{G}).
\] satisfying certain conditions.
\end{proposition}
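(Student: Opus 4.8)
The plan is to write down the pairing on the Suslin complex and then check that it descends to homology; the only non-formal point is the vanishing on boundaries, which I will obtain by transferring the problem to $\mathbb{A}^1_k$ and invoking the $\mathbb{A}^1$-invariance hypothesis. Recall that $H^S_0(X)$ is the cokernel of $i_0^{*}-i_1^{*}\colon C_1(X)\to z_0(X)$, where $C_1(X)$ denotes the group of finite correspondences from $\Delta^1_k\cong\mathbb{A}^1_k$ to $X$, $z_0(X)$ is the group of zero-cycles, and $i_\epsilon^{*}W$ (for $\epsilon=0,1$) is the zero-cycle obtained by intersecting a finite correspondence $W$ with the fibre over $\epsilon\in\mathbb{A}^1_k(k)$ and pushing forward to $X$. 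Since $X$ is smooth, the specialization property for $\mathcal{G}$ provides, for each closed point $x\in X$, a specialization map $s_x\colon H^1_{\mathrm{ct, ur}}(X/k,\mathcal{G})\to H^1(k(x),\mathcal{G})$, and Theorem \ref{intro thm H1} equips $H^1_{\text{\'et}}(-,\mathcal{G}_\bullet)$ with corestrictions $\mathrm{N}_{k(x)/k}$. For $\alpha\in H^1_{\mathrm{ct, ur}}(X/k,\mathcal{G})$ I set
\[
\big\langle \textstyle\sum_i n_i[x_i],\ \alpha\big\rangle\ :=\ \sum_i n_i\,\mathrm{N}_{k(x_i)/k}\big(s_{x_i}(\alpha)\big)\ \in\ H^1(k,\mathcal{G}),
\]
which is visibly biadditive, so everything comes down to showing it kills $(i_0^{*}-i_1^{*})(W)$.

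By additivity we may assume $W\subseteq\mathbb{A}^1_k\times X$ is integral, finite and surjective over $\mathbb{A}^1_k$, with projections $g\colon W\to\mathbb{A}^1_k$ and $f\colon W\to X$; if $f(W)$ is a point then $i_0^{*}W=i_1^{*}W$ and there is nothing to prove, so assume $f(W)$ is a curve. Using the constructions of Section \ref{section unramified ct} (the specialization property together with the curve-tameness of $\alpha$), the class $\alpha$ restricts to the curve $\overline{f(W)}$ and pulls back to a class $f^{*}\alpha\in H^1(k(W),\mathcal{G})$ that is unramified at every closed point of the normalization $\widetilde W$ and tame at the finitely many points of the smooth projective model $\overline W$ over $\infty\in\mathbb{P}^1_k$. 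Using the norm of the cycle premodule $H^1_{\text{\'et}}(-,\mathcal{G}_\bullet)$ I form the transfer $W^{*}\alpha:=\mathrm{N}_{k(W)/k(t)}(f^{*}\alpha)\in H^1(k(t),\mathcal{G})$, where $k(t)=k(\mathbb{A}^1_k)$. The key point is that $W^{*}\alpha$ lies in $H^1_{\mathrm{tame, ur}}(\mathbb{A}^1_k/k,\mathcal{G})$: it is unramified over every closed point of $\mathbb{A}^1_k$ because residues commute with norms along finite maps of curves (cycle premodule axiom R3) and the relevant points of $\widetilde W$ are precisely points where $f^{*}\alpha$ is unramified; and it is tame at $\infty$ since $f^{*}\alpha$ is tame over $\infty$ and the tame subgroup is stable under restriction and norm (Section \ref{section tame cohomology}). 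The hypothesis of the proposition then yields a unique $\gamma\in H^1(k,\mathcal{G})$ with $W^{*}\alpha=\pi^{*}\gamma$, where $\pi\colon\mathbb{A}^1_k\to\Spec k$.

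It remains to identify $\langle i_\epsilon^{*}W,\alpha\rangle$ with the value $s_\epsilon(W^{*}\alpha)$ of $W^{*}\alpha$ at the rational point $\epsilon$. This is an instance of the compatibility of proper pushforward with the divisorial Gysin/specialization maps in Rost's cycle-module formalism, applied to the finite morphism $g\colon W\to\mathbb{A}^1_k$ and the divisor $\{\epsilon\}\subset\mathbb{A}^1_k$, using that $W$ is flat over $\mathbb{A}^1_k$ (being a one-dimensional domain over a regular base) and that all ramification indices in the fibre are prime to $p$ by tameness, so that the formalism applies to the merely partial cycle premodule $H^1_{\text{\'et}}(-,\mathcal{G}_\bullet)$ with no wild correction terms. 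Granting this, and since $s_\epsilon\circ\pi^{*}=\mathrm{id}$ on $H^1(k,\mathcal{G})$, we get $\langle i_0^{*}W,\alpha\rangle=s_0(\pi^{*}\gamma)=\gamma=s_1(\pi^{*}\gamma)=\langle i_1^{*}W,\alpha\rangle$, so $\langle(i_0^{*}-i_1^{*})(W),\alpha\rangle=0$ and the pairing descends to $H^S_0(X)$. The remaining properties asserted in the statement (behaviour under corestriction along finite extensions of $k$, naturality in $X$, and compatibility with $H^1(k,\mathcal{G})\xrightarrow{\sim}H^1_{\mathrm{tame, ur}}(\mathbb{A}^1_k/k,\mathcal{G})$ when $X=\mathbb{A}^1_k$) are then read off from the construction together with the functoriality of the cycle premodule.

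I expect the genuine difficulty to be concentrated in two places. The first is verifying that the pulled-back class $f^{*}\alpha$ is as unramified as claimed when $f\colon W\to X$ is neither dominant nor finite and $f(W)$ may be singular — this is exactly where the specialization property, applied through the normalization $\widetilde W$, does its work. The second is the bookkeeping in the last step, where one must reconcile the scheme-theoretic intersection multiplicities in the fibre of $g$ on $W$ with the ramification indices on $\widetilde W$ and confirm that the tameness hypotheses genuinely suffice to run Rost's pushforward/Gysin formalism on the partial cycle premodule $H^1_{\text{\'et}}(-,\mathcal{G}_\bullet)$. Once $W^{*}\alpha$ is known to land in $H^1_{\mathrm{tame, ur}}(\mathbb{A}^1_k/k,\mathcal{G})$, the hypothesis of the proposition delivers the conclusion with no further effort.
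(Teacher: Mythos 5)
Your construction and descent argument follow essentially the same route as the paper: the pairing is restriction to closed points followed by corestriction to $k$, and the vanishing on boundaries is obtained by pushing the pulled-back class forward along the correspondence to $\mathbb{A}^1_k$, checking it lands in $H^1_{\mathrm{tame,ur}}(\mathbb{A}^1_k/k,\mathcal{G})$, and invoking the hypothesis to equate the fibres at $0$ and $1$. The one step you ``grant'' --- the compatibility $\langle i_\epsilon^{*}W,\alpha\rangle=s_\epsilon(W^{*}\alpha)$ --- is exactly what the paper isolates as Lemma \ref{well_definedness_aux_2} and proves from axiom \textbf{R2b} together with the corestriction--residue compatibility of Lemma \ref{corestrictions-residues}; note that no prime-to-$p$ condition on ramification indices is needed there (``tame'' in this paper refers to the tame cohomology subgroup, not to tame ramification of the cover $W\to\mathbb{A}^1_k$).
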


Using this, one gets the following analogue of the main result of \cite{Otabe2023} (see Theorem \ref{curve_tame_equals_cohomology_of_base_field}).

\begin{theorem} Let $\mathcal{G}$ be an $\mathbb{A}^1$-invariant \'{e}tale sheaf with transfers over a field of characteristic $p>0$ satisfying the specialization Property \ref{specialization} and such that the canonical map $H^1(k,\mathcal{G}) \xrightarrow{\sim} H^1_{\mathrm{tame, ur}}(\mathbb{A}^1_k/k, \mathcal{G})$ is an isomorphism. Let $X$ be a smooth geometrically connected variety over $k$. Suppose that for any finitely generated field $K$ over $k$, the degree map $H^0_S(X_K) \rightarrow \mathbb{Z}$ is an isomorphism. Then the pullback $H^1(k, \mathcal{G}) \rightarrow H^1_{\mathrm{ct, ur}}(X/k,\mathcal{G}) $ is an isomorphism.
\end{theorem}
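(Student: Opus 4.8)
The strategy is to exhibit pairing with the canonical zero-cycle of degree one as a two-sided inverse of the pullback map. By hypothesis the degree map identifies $H^S_0(X_K)$ with $\mathbb{Z}$ for every finitely generated field extension $K/k$; in particular $H^S_0(X)=\mathbb{Z}$, and we let $z_0$ denote the generator of degree one, while for any finitely generated $K/k$ the base change $z_{0,K}\in H^S_0(X_K)$ is the unique class of degree one. Writing $f\colon X\to\Spec k$ for the structure morphism and using the pairing of Proposition~\ref{well_definedness_of_pairing}, I would define
\[
s\colon H^1_{\mathrm{ct, ur}}(X/k,\mathcal{G})\longrightarrow H^1(k,\mathcal{G}),\qquad s(\alpha)=\langle z_0,\alpha\rangle .
\]

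First I would check that $s\circ f^{*}=\mathrm{id}$. This is immediate from the projection formula $\langle z_0,f^{*}\beta\rangle=(\deg z_0)\cdot\beta=\beta$, which is among the properties of the pairing recorded in Proposition~\ref{well_definedness_of_pairing}; in particular $f^{*}$ is injective. The content lies in the reverse identity $f^{*}\circ s=\mathrm{id}$, that is, $f^{*}\langle z_0,\alpha\rangle=\alpha$ for every $\alpha\in H^1_{\mathrm{ct, ur}}(X/k,\mathcal{G})$. Here I would use that $H^1_{\mathrm{ct, ur}}(X/k,\mathcal{G})$ is, by construction, a subgroup of $H^1(k(X),\mathcal{G})$ via restriction to the generic point $\eta=\Spec k(X)$, and that the composite of $f^{*}$ with this restriction is just the restriction map along the field extension $k\subseteq k(X)$; thus it suffices to compare the images of $\alpha$ and of $f^{*}\langle z_0,\alpha\rangle$ in $H^1(k(X),\mathcal{G})$.

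Base changing along $k\to k(X)$, the variety $X_{k(X)}$ carries the canonical $k(X)$-rational point $\Delta$ supported at the generic point of $X$; its class $[\Delta]\in H^S_0(X_{k(X)})$ has degree one, hence equals $z_{0,k(X)}$ by the injectivity of the degree map over $k(X)$. Using the compatibility of the pairing and of the pullback with base change, the image of $\langle z_0,\alpha\rangle$ in $H^1(k(X),\mathcal{G})$ is $\langle z_{0,k(X)},\alpha_{k(X)}\rangle=\langle[\Delta],\alpha_{k(X)}\rangle$, where $\alpha_{k(X)}$ is the image of $\alpha$ in $H^1_{\mathrm{ct, ur}}(X_{k(X)}/k(X),\mathcal{G})$. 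Finally, pairing against the class of a rational point computes the pullback along that point, so $\langle[\Delta],\alpha_{k(X)}\rangle$ is the pullback of $\alpha_{k(X)}$ along $\Delta$, which is precisely the image of $\alpha$ under restriction to $\eta$. Hence $\alpha$ and $f^{*}\langle z_0,\alpha\rangle$ coincide in $H^1(k(X),\mathcal{G})$, and therefore in $H^1_{\mathrm{ct, ur}}(X/k,\mathcal{G})$. This shows $f^{*}$ is an isomorphism with inverse $s$.

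The main obstacle is not the formal argument above but the verification that the pairing of Proposition~\ref{well_definedness_of_pairing} satisfies exactly the three compatibilities invoked: the projection formula, base change along finitely generated field extensions, and the identification of the pairing against the class of a rational point with the corresponding pullback. These are the ``certain conditions'' in the statement of that proposition; once they are established, the theorem follows by combining them with the hypothesis $H^S_0(X_{k(X)})\cong\mathbb{Z}$ to identify $z_{0,k(X)}$ with $[\Delta]$, together with the fact that $H^1_{\mathrm{ct, ur}}(X/k,\mathcal{G})$ embeds into $H^1(k(X),\mathcal{G})$ compatibly with pullback from the base.
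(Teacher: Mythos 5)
Your proposal follows essentially the same route as the paper: injectivity via pairing a degree-one zero-cycle against the pullback, and surjectivity by base-changing to $K=k(X)$, identifying the degree-one class with the generic point $\eta\in (X_K)_{(0)}$ using injectivity of the degree map, and using that pairing with $\eta$ recovers the inclusion $H^1_{\mathrm{ct, ur}}(X/k,\mathcal{G})\subset H^1(K,\mathcal{G})$ (the paper's Lemmas \ref{curve_tame_special_compatibility} and \ref{curve_tame_generic_compatibility}). The one point you gloss over is that the base-change compatibility $\langle z,\alpha\rangle_K=\langle z_K,\alpha_K\rangle$ is only established for cycles supported at points with \emph{separable} residue fields (otherwise $K\otimes_k k(x)$ need not be a product of fields), and producing a degree-one cycle with separable support from the index-one hypothesis requires the theorem of Gabber--Liu--Lorenzini, which the paper invokes explicitly; aside from that, your ``certain conditions'' are exactly the compatibilities the paper proves.
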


\subsection*{Conventions}

We work over a perfect field $k$.  We assume that every scheme is equidimensional, separated and of finite type over $k$.  

All fields will be assumed to be finitely generated over $k$.  Let $\Fields_k$ denote the category of finitely generated field extensions of $k$.  All valuations on a field are assumed to be of rank $1$ and of geometric type over $k$, which means that the local ring of the valuation is a regular local ring which is the localization of a height 1 prime ideal of an integral domain finitely generated over $k$.

For a field $F$, we will denote its Henselization by $F^h$ and its strict Henselization by $F^{sh}$, with respect to a separable closure $F^{sep}$. The absolute Galois group of $F$ will be denoted by $\Gamma_F:= {\rm Gal}(F^{sep}/F)$. The $i$th Milnor $K$-group of $F$ will be denoted by $\KM_i(F)$. For any $\Gamma_F$-module $M$, the Galois cohomology groups $H^i(\Gamma_F, M)$ will be denoted by $H^i(F, M)$, which is also the notation for the corresponding \'etale cohomology groups.  We will abuse the notation and denote the Galois cohomology classes and cocycles representing them by the same symbol as long as there is no confusion.

For a scheme $X$ over $k$, we write $X^{(i)}$ for the set of points of codimension $i$ on $X$ and $X_{(i)}$ for the set of points of dimension $i$ on $X$. We will write $Z_i(X)$ for the group of algebraic cycles of dimension $i$ on $X$ and $\CH_i(X)$ for the Chow group of algebraic cycles of dimension $i$ on $X$ (that is, the quotient of $Z_i(X)$ modulo rational equivalence).

\section{Logarithmic de Rham-Witt sheaves}
\label{section log drw}

Let $k$ be a field of characteristic $p > 0$ such that $\deg (k:k^p)= p^e$.  Let $X$ be a scheme of dimension $d$ over $k$.  For any integer $r>0$, let $W_r\Omega_X^\bullet$ denote the de Rham-Witt complex of $X$ defined in \cite{Illusie}.  For any integer $q = d+e$, we denote by $\nu_r(q): = W_r\Omega_{X, \rm log}^q$ the logarithmic de Rham-Witt sheaf of $X$ defined in \cite[Definition 2.6]{Shiho} to be the \'etale sheaf on $X$ defined to the image of 
\[
\left(\@O_X^\times \right)^{\otimes q} \to W_r\Omega_X^q; \quad x_1 \otimes \cdots \otimes x_q \mapsto {\rm dlog}[x_1] \wedge \cdots {\rm dlog}[x_q],
\]
where $[x_i] \in W_r \@O_X$ is the Teichm\"uller representative of $x_i$, for each $i$.  

In \cite{Kato1986}, Kato defined a family of complexes for $q\in \mathbb{Z}$ when $n \neq 1$ and $q\ge0$ when $n=1$ given by:
\begin{equation}
\label{eqn Kato complex cohomological}
C^\bullet(X,\mathbb{Z}/p^r\mathbb{Z}(q), n)\colon 0\rightarrow\bigoplus_{x\in X^{(0)}} H^n(k(x),\nu_r(q))\rightarrow\dots\rightarrow \bigoplus_{x\in X^{(d)}}H^n(k(x),\nu_r(q-d))\rightarrow 0, 
\end{equation}
under the identification $\#Z/p^r\#Z(q)[q] = \nu_r(q)$.  In the case $n=0$, it can be identified with Rost's cycle complex for the cycle module corresponding to mod-$p^r$ Milnor $K$-theory under the isomorphism $H^0_{\text{\'et}}(F, \mu_r(q)) = H^{n}_{\text{\'et}}(F, \#Z/p^r\#Z(n)) \simeq \KM_n(F)/p^r$ for any field $F$ obtained by Bloch-Gabber-Kato (see \cite{Bloch-Kato}).  

Our aim is to study the case $n=1$ above.  It is known that the functor $F \mapsto H^1(F, \nu_r(q))$ does not give rise to a cycle module \cite{Totaro}.  In fact, the functor $H^1(-, \nu_r(q))$ is not $\#A^1$-invariant on $Sm_k$ unlike $H^0(-, \mu_\ell^{\otimes q})$, where $\ell$ is coprime to $p$.  In Theorem \ref{theorem existence_Kato_complex} below, we will exhibit that this functor carries a slightly weaker structure than that of a cycle module.  In order to motivate our constructions for a general \'etale sheaf, we recall the construction of the tame subgroup of $H^1(F, \nu_r(q))$ studied in \cite{Totaro}.  In \cite[Section 4]{Totaro}, the tame cohomology group is defined as follows.

\begin{definition}\label{TotaroTamenessDefinition} Let $K \in \Fields_k$ and let $v$ be a discrete valuation on $K$ with residue field $k(v)$. Let $K_v$ be the completion of $K$ with respect to $v$. For $i \ge 0$, define the tame cohomology group  $H^{i+1,i}_{\mathrm{tame},v}(K)\subset H^1(K,\nu_r(i))$ as the inverse image of $\Ker (H^1(K_v, \nu_r(i))\rightarrow H^1(K_{v,\mathrm{tame}}, \nu_r(i)))$ under the restriction map $H^1(K,\nu_r(i))\rightarrow H^1(K_v,\nu_r(i))$, where $K_v$ is the completion of $K$ with respect to $v$.
\end{definition}

As pointed out in \cite{Otabe2023}, the maximal tamely ramified extension $K_{\mathrm{tame}}$ in the above definition may be replaced by the maximal unramified extension $K_{ur}$ by \cite[Remark 3.7]{Auel-Bigazzi-Boehning}.

\begin{lemma}\label{equivalenceoftameness}
If $K$ is complete and $i\ge 0$, then $H^{i+1,i}_{\mathrm{tame},v}(K)\cong H^1(k(v), H^0(K^{sh}, \nu_r(i)))$, where $K^{sh}$ is the strict Henselization of $K$ with respect to some separable closure of $k(v)$.
\end{lemma}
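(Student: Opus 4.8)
The plan is to unwind the definition of the tame subgroup (Definition \ref{TotaroTamenessDefinition}) using the Hochschild--Serre spectral sequence for the tower $K^{sh}/K_{ur}/K$, together with the identification $\Gal(K^{sh}/K_{ur}) \cong \Gamma_{k(v)}$ coming from the fact that $K$ is complete (hence Henselian) and the residue map on Galois groups is an isomorphism onto the absolute Galois group of the residue field. Since $K$ is already complete, the restriction map $H^1(K,\nu_r(i)) \to H^1(K_v,\nu_r(i))$ in the definition is the identity, so $H^{i+1,i}_{\mathrm{tame},v}(K) = \Ker\bigl(H^1(K,\nu_r(i)) \to H^1(K_{ur},\nu_r(i))\bigr)$, using the observation recorded after Definition \ref{TotaroTamenessDefinition} that $K_{\mathrm{tame}}$ may be replaced by $K_{ur}$.

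First I would write down the Hochschild--Serre spectral sequence $H^s(\Gal(K_{ur}/K), H^t(K_{ur},\nu_r(i))) \Rightarrow H^{s+t}(K,\nu_r(i))$ and extract the five-term exact sequence in low degrees. The kernel of the edge map $H^1(K,\nu_r(i)) \to H^1(K_{ur},\nu_r(i))$ is precisely $H^1(\Gal(K_{ur}/K), H^0(K_{ur},\nu_r(i)))$. Now I would identify the two ingredients: $\Gal(K_{ur}/K) \cong \Gal(k(v)^{sep}/k(v)) = \Gamma_{k(v)}$ since $K$ is Henselian, and $H^0(K_{ur},\nu_r(i)) = H^0(K^{sh},\nu_r(i))$, since the inertia subgroup $\Gal(K^{sep}/K_{ur})$ acts trivially on $\nu_r(i)$ (the logarithmic de Rham--Witt sheaf is built from units and $\mathrm{dlog}$, and the tame/wild inertia acts through the residue field in a way that fixes the relevant $p$-torsion classes -- more precisely $H^0(K^{sh},\nu_r(i))$ is already the inertia-invariants). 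Putting these together yields $H^{i+1,i}_{\mathrm{tame},v}(K) \cong H^1(\Gamma_{k(v)}, H^0(K^{sh},\nu_r(i))) = H^1(k(v), H^0(K^{sh},\nu_r(i)))$, which is the claim.

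The step I expect to be the main obstacle is justifying $H^0(K_{ur},\nu_r(i)) = H^0(K^{sh},\nu_r(i))$, i.e.\ that the extra (wild) inertia picked up in passing from $K_{ur}$ to $K^{sh}$ contributes nothing to the degree-zero cohomology, and more carefully that the five-term sequence really does give an \emph{isomorphism} onto the whole tame subgroup rather than just an inclusion -- this needs the vanishing (or irrelevance) of the $H^2$-differential, or equivalently a dimension-shifting argument. I would handle the first point using the explicit structure of $\nu_r(i)$ over a Henselian discrete valuation ring (Bloch--Kato--Gabber-type filtration, or directly the fact that $\nu_r(i)$ over the strictly Henselian local ring is generated by symbols in units, on which the residual Galois action is the only nontrivial one) and the second by noting that the sequence computing $\Ker$ only needs the five-term exactness, which is unconditional. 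One could alternatively, and perhaps more cleanly, observe that $\nu_r(i)$ restricted to the small \'etale site of $\Spec \@O_K^h$ is locally constant in a suitable sense and invoke the comparison between \'etale cohomology of the generic point and Galois cohomology of the residue field directly; I would mention this as a remark but carry out the Hochschild--Serre argument as the main line of proof.
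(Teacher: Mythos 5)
Your proposal is correct and follows essentially the same route as the paper: the Hochschild--Serre (inflation--restriction) sequence identifies the kernel of $H^1(K,\nu_r(i))\to H^1(K^{sh},\nu_r(i))$ with $H^1(k(v),H^0(K^{sh},\nu_r(i)))$, and the tame subgroup is that kernel once $K_{\mathrm{tame}}$ is replaced by $K_{ur}$. The one point to correct is the step you single out as the main obstacle: for $K$ complete (hence Henselian), the strict Henselization $K^{sh}$ \emph{is} the maximal unramified extension $K_{ur}$ (this is the paper's one-line appeal to \cite[pg.~414]{Fu2011}), so no wild inertia is acquired in passing from $K_{ur}$ to $K^{sh}$ and the equality $H^0(K_{ur},\nu_r(i))=H^0(K^{sh},\nu_r(i))$ is a tautology, not something requiring a Bloch--Kato--Gabber-type analysis; your other worry is correctly dispatched by the unconditional exactness of the five-term sequence (the paper instead invokes $E_2^{a,b}=0$ for $a,b>1$, which is more than is needed here).
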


\begin{proof}

Recall that $\Gal(K^{sh}/K) \cong \Gal(k(v)^{sep}/k(v))$. Therefore, for the extension $K \rightarrow K^{sh}$, we have the Hochschild-Serre spectral sequence
\[
E^{a,b}_2=H^{a}(k(v), H^{b}(K^{sh}, \nu_r(i))) \Rightarrow H^{a+b}(K, \nu_r(i)).
\]
Since $E^{a,b}_2=0$ whenever $a$ or $b$ is greater than $1$, we have the following exact sequence
\[
0\rightarrow H^{1}(k(v), H^{0}(K^{sh}, \nu_r(i))) \rightarrow H^1(K, \nu_r(i))) \xrightarrow{f} H^{0}(k(v), H^{1}(K^{sh}, \nu_r(i))) \rightarrow 0.
\]
Note that $H^{0}(k(v), H^{1}(K^{sh}, \nu_r(i)))$ is the subgroup of $H^1(K^{sh}, \nu_r(i))$ invariant under the action of $\Gal(K^{sh}/K)$. The composition of $f$ with the inclusion is the restriction map $H^1(K, \nu_r(i))\rightarrow H^1(K^{sh}, \nu_r(i))$.

Since $K$ is complete and in particular Henselian, $K^{sh}$ is the maximal unramified extension $K_{ur}$ with respect to $v$ (see \cite[pg. 414]{Fu2011}). Therefore, we have
\[
H^{1}(k(v), H^{0}(K^{sh}, \nu_r(i))) \cong \Ker (H^1(K, \nu_r(i))\rightarrow H^1(K^{sh}, \nu_r(i))) = H^{i+1,i}_{\mathrm{tame},v}(K).
\]
\end{proof}

In \cite[Section 4]{Totaro}, Totaro defines a residue map $\partial'_v: H^{i+2, i+1}_{\mathrm{tame}, v}(K) \rightarrow H^1(k(v), \nu_1(i))$, whose definition we recall as follows.  We have an exact sequence
\[
0 \rightarrow H^0(K, \nu_1(i+1)) \rightarrow \Omega^{i+1}_{K} \xrightarrow{\mathcal{P}} \Omega^{i+1}_{K}/d\Omega^{i}_{K} \xrightarrow{\delta} 
H^1(K, \nu_1(i+1)) \rightarrow 0,\] where $\mathcal{P}$ is the Artin-Schreier map given by
\[
\mathcal{P}(a(db_0/b_0)\wedge \dots \wedge (db_i/b_i)) = (a^p-a)(db_0/b_0)\wedge \dots \wedge (db_i/b_i)
\]
and $\delta$ is the connecting map. By \cite[Theorem 4.4]{Totaro}, the subgroup $H^{i+2,i+1}_{\mathrm{tame},v}(K)$ is generated by elements of the form $[a, b_0, \dots, b_i\} := \delta(a(db_0/b_0)\wedge \dots \wedge (db_i/b_i))$, where $v(a) \ge 0$. The residue map is defined as 
\begin{equation}
\label{eqn Totaro residue}
\partial'_v([a,b_0,\dots,b_i\}) = [\overline{a}, \partial_v\{b_0,\dots,b_i\}.
\end{equation}
In Section \ref{section tame cohomology}, we extend the definition of the residue map to the case where the logarithmic de Rham-Witt sheaves are replaced by an arbitrary $\#A^1$-invariant \'etale sheaf with some additional hypotheses motivated by the case of logarithmic de Rham-Witt sheaves.

\section{The tame cohomology group }
\label{section tame cohomology}

Let $Sm_k$ be the category of smooth separated schemes over a perfect field $k$ of characteristic $p>0$.  All fields considered will be finitely generated over $k$. For a field $K$, let $Val(K/k)$ denote the set of rank 1 discrete valuations of geometric type on $K$ over $k$. For a valuation $v$ on a field $K$, $K_v$ will denote the completion of $K$ with respect to $v$.

\begin{notation}
\label{notation G} 
For the rest of this section, we use the following notation:
\begin{enumerate}
\item  $\mathcal{G}$ is an $\mathbb{A}^1$-invariant \'etale sheaf with transfers. By \cite[Definition 3.4.3]{Deglise}, there exist twists $\mathcal{G}_i$ for negative integers $i$.  As in \cite[Definition 3.4.17]{Deglise}, one can define the twists $\mathcal{G}_i$ for $i \in \#N$.

\item For $i\in \mathbb{Z}$ and $K\in \Fields_k$, we denote $M_i(K)\colon=H^1_{\text{\'et}}(K,\mathcal{G}_i)$.

\end{enumerate}
\end{notation}

We can define the cycle-premodule data $\mathbf{(D1)}-\mathbf{(D3)}$ for $\{M_i\}_{i\in \mathbb{Z}}$. However the residue map \textbf{(D4)} is not defined in general.

\begin{enumerate}[label={$\mathbf{(D\arabic*)}$}]
\item For an extension $\varphi\colon K\rightarrow L$ in $\Fields_k$, there are restriction maps $\varphi_*\colon M_i(K)\rightarrow M_i(L)$.

\item For a finite field extension $\varphi:K\rightarrow L$ in $\Fields_k$, there are corestriction maps $\varphi^*\colon M_i(L)\xrightarrow{}M_i(K)$ defined as follows:

Choose models $X$ and $X'$ for $K$ and $L$ respectively and a finite dominant morphism $f:X'\xrightarrow{}X$ such that it induces $\varphi$ on the fraction fields. The transpose of $f$ induces a morphism of sheaves $$f_*\mathcal{G}_{X'}\xrightarrow{}\mathcal{G}_{X}$$ on $X_{\text{\'et}}$. Applying $H^1_{\text{\'et}}(X,-)$, we get a map $$H^1_{\text{\'et}}(X,f_*\mathcal{G}_{X'})\xrightarrow{}H^1_{\text{\'et}}(X,\mathcal{G}_{X}).$$ We have such maps for each $f^{-1}(U)\xrightarrow{}U$ for every open subset $U\subset X$. Now, since $f$ is finite, $R^1f_*\mathcal{G}_{X'}=0$ (see \cite[\href{https://stacks.math.columbia.edu/tag/03QP}{Tag 03QP}]{stacks-project}); therefore, by the Leray spectral sequence, $H^1_{\text{\'et}}(X,f_*\mathcal{G}_{X'})\cong H^1_{\text{\'et}}(X',\mathcal{G}_{X'})$.

This gives us a map $H^1_{\text{\'et}}(X',\mathcal{G}_{X'})\xrightarrow{}H^1_{\text{\'et}}(X,\mathcal{G}_{X})$. Open subsets of the form $f^{-1}U$ for open subsets $U$ of $X$ are cofinal among the open subsets of $X'$. So we can define the corestriction map as
\begin{align*}\varphi^*\colon H^1_{\text{\'et}}(L,\mathcal{G})\cong \colim_{\emptyset\ne U'\subset X'}H^1_{\text{\'et}}(U',\mathcal{G})\cong\colim_{\emptyset\ne U\subset X}H^1_{\text{\'et}}(f^{-1}(U),\mathcal{G})\\
\xrightarrow{}\colim_{\emptyset\ne U\subset X}H^1_{\text{\'et}}(U,\mathcal{G})\cong H^1_{\text{\'et}}(K,\mathcal{G}).
\end{align*}

\item For $K\in \Fields_k$, there is an action $\KM_i(K)\times M_j(K)\xrightarrow{}M_{i+j}(K)$ induced by the cup product in Galois cohomology and the action 
\[
\KM_i(K)\times \mathcal{G}_{j}(K)\xrightarrow{}\mathcal{G}_{i+j}(K)\] 
defined in \cite[Definition 5.5.17]{Deglise}.  We will use $\cdot$ to denote the product as well as the action of Milnor $K$-theory groups.
\end{enumerate}

The data $\mathbf{(D1)}$--$\mathbf{(D3)}$ given above satisfies the cycle premodule axioms $\mathbf{(R1)}$--$\mathbf{(R3)}$ of \cite[Definition 1.1]{Rost1996}.  These are easy to verify and are left to the reader.  

\begin{enumerate}[label={$\mathbf{R1\alph*.}$}]
\item For field extensions $\varphi\colon K\xrightarrow{}L$ and $\psi\colon K\xrightarrow{}F$, we have $(\psi \circ \varphi)_*=\psi_* \circ \varphi_*$.

\item For finite field extensions $\varphi\colon K\xrightarrow{}L$ and $\psi\colon L\xrightarrow{}F$, we have $(\psi \circ \varphi)^*=\varphi^* \circ \psi^*$.

\item For field extensions $\varphi\colon K\xrightarrow{}L$ and $\psi\colon K\xrightarrow{}F$, where $\varphi$ is finite and $R=F\otimes_{K}L$, we have \[\psi_* \circ \varphi^*=\sum_{p\in\Spec R}l(R_p)\varphi^*_p(\psi_z)_*,\] where $\varphi_p$ is the extension $K\xrightarrow{}L\xrightarrow{}R\xrightarrow{}R/p$ and $\psi_z$ is similarly defined.
\end{enumerate}

\begin{enumerate}[label={$\mathbf{R2\alph*.}$}]
\item For a field extension $\varphi\colon K\xrightarrow{}L$, $\alpha\in K^M_n(K)$ and $\rho\in M_i(K)$, we have $\varphi_*(\alpha \cdot \rho)=\varphi_*(\alpha)\cdot \varphi_*(\rho)$.

\item If $\varphi\colon K\xrightarrow{}L$ is a finite extension and $\mu\in M_i(L)$, then $\varphi^*((\varphi_*\alpha) \cdot\mu)=\alpha \cdot \varphi^*(\mu)$.

\item If $\varphi\colon K\xrightarrow{}L$ is a finite extension and $\beta\in \KM_n(L)$, then  $\varphi^*(\beta \cdot\varphi_*(\rho))=\varphi^*(\beta)\cdot(\rho)$.
\end{enumerate}

The residue map, i.e, datum \textbf{(D4)} is not defined in general. However, we can define a residue map from a subgroup of $H^1(K, \mathcal{G}_i)$ called the \textit{tame cohomology group}. This notion was originally defined for the sheaf of K\"ahler differentials by Izhboldin in \cite{Izhboldin} and developed by Totaro in \cite{Totaro}.  We can generalize the definition of the tame subgroup for any $\mathbb{A}^1$-invariant \'etale sheaf with transfers as follows.

\begin{definition}
Let $K$ be a field and $v\in Val(K/k)$ with residue field $k(v)$. Let $\mathcal{F}$ be an $\mathbb{A}^1$-invariant sheaf over $k$. We define the tame cohomology group  $H^1_{\mathrm{tame},v}(K, \mathcal{G}_{i})\subset H^1(K,\@G_i)$ as the inverse image of \[\Ker (H^1(K_v,\mathcal{G}_{i})\rightarrow H^1(K_v^{sh}, \mathcal{G}_{i})) = \Img(H^1(k(v), H^0(K_v^{sh}, \mathcal{G}_{i})) \hookrightarrow H^1(K_v, \mathcal{G}_{i}))\]  under the restriction map $H^1(K,\mathcal{G}_{i})\rightarrow H^1(K_v,\mathcal{G}_{i})$. For a general field $K$, define \[H^1_{\mathrm{tame}}(K/k, \mathcal{G}_{i}) = \bigcap_{v\in Val(K/k)}H^1_{\mathrm{tame},v}(K, \mathcal{G}_{i}).\] 
\end{definition}

By Lemma \ref{equivalenceoftameness}, when $\mathcal{G}_{i} = \Omega^i$, this agrees with Definition \ref{TotaroTamenessDefinition}. We now define a generalization of Totaro's residue map and will show that it agrees with Totaro's definition when $\mathcal{G}_{i} = \Omega^i$. 

\begin{definition}\label{tameResidue}
Define the residue map $\partial_v$ as the composite
\begin{align*}
H^1_{\mathrm{tame},v}(K, \mathcal{G}_{i+1}) &\rightarrow H^1_{\mathrm{tame},v}(K_v, \mathcal{G}_{i+1}) \cong  H^1(k(v), H^0(K_v^{sh}, \mathcal{G}_{i+1}))\\
&\xrightarrow{H^1(k(v),\partial_v)} H^1(k(v),H^0(k(v)^{sep},\mathcal{G}_{i})) \cong H^1(k(v), \mathcal{G}_{i}),
\end{align*} 
where the two isomorphisms are given by the Hochschild-Serre spectral sequence.  This map restricts to give a residue map $\partial_v:H^1_{\mathrm{tame}}(K/k, \mathcal{G}_{i+1})\rightarrow H^1(k(v), \mathcal{G}_{i}),$ which we take as the datum $\mathbf{(D4)}.$
\end{definition}

\begin{remark}
\label{remark cocycles}
We will freely use the description of elements of $M_i(K) = H^1(K, \mathcal{G}_i)$ in terms of 1-cocycles.  If $K$ is complete, then $H^1_{\mathrm{tame},v}(K,\mathcal{G}_{i+1}) \cong H^1(k(v),H^0(K^{sh},\mathcal{G}_{i+1}))$ and the residue map in terms of 1-cocycles representing $H^1(k(v),H^0(K^{sh},\mathcal{G}_{i}))$ is given by: 
\[
\partial_v(\alpha)(\sigma)=\partial_v(\alpha(q^{-1}\sigma)),
\] 
where $\sigma\in \Gamma_{k(v)}$, $q$ is the quotient map $\Gamma_F \xrightarrow{} \Gal(F^{sh}/F)\cong\Gamma_{k(v)}$ and the $\partial_v$ is the residue map on the cycle module $H^0(-,\mathcal{G}_{\ast}).$

We will use the explicit description of the corestriction map in $\mathbf{(D2)}$ in terms of cocycles \cite[Chapter 1, Section 5.4]{Neukirch-Schmidt-Wingberg}. If $H$ is an open subgroup of a profinite group $G$ and $A$ is a $G$-module, the corestriction map $H^1(H,A)\xrightarrow{}H^1(G,A)$ is as follows: let $\alpha\colon H\xrightarrow{}A$ be a cocycle; then the corestriction of $\alpha$ to $G$ sends $\sigma\in G$ to \[\sum_{\tau\in H\backslash G}s(\tau)^{-1}\alpha(\overline{s(\tau)\sigma s(\tau\sigma)^{-1}}),\] where $s$ is a set-theoretic splitting of $G\xrightarrow{}H\backslash G$.
\end{remark}

\begin{lemma}
\label{lemma residues agree}
When $r=1$ and $\@G_i =  \Omega^i = \nu_1(i)$, the residue map $\partial_v$ agrees with Totaro's residue map $\partial'_v$ defined in \eqref{eqn Totaro residue}.
\end{lemma}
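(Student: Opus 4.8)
The plan is to trace through both definitions of the residue map and show they produce the same class in $H^1(k(v), \Omega^i)$. By Theorem 4.4 of \cite{Totaro}, the tame subgroup $H^{i+2,i+1}_{\mathrm{tame},v}(K)$ is generated by symbols $[a, b_0, \dots, b_i\} = \delta(a\, (db_0/b_0) \wedge \cdots \wedge (db_i/b_i))$ with $v(a) \ge 0$, so it suffices to check the two residue maps agree on such a generator. Since both residue maps are defined via restriction to the completion $K_v$ (Definition \ref{tameResidue} for $\partial_v$, and Totaro's construction is also compatible with completion), I would first reduce to the case where $K$ is complete. Then, by Lemma \ref{equivalenceoftameness} and Remark \ref{remark cocycles}, $\partial_v$ is computed as the map on $H^1(k(v), -)$ induced by the residue map of the cycle module $H^0(-, \nu_1(\ast))$ applied to the coefficients, precomposed with the quotient $\Gamma_F \to \Gamma_{k(v)}$.

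The key step is to identify, for a complete field $K$ with a symbol $[a, b_0, \dots, b_i\}$ with $v(a) \ge 0$, the corresponding class in $H^1(k(v), H^0(K^{sh}, \nu_1(i+1)))$ under the isomorphism of Lemma \ref{equivalenceoftameness}, and then to check that applying $H^1(k(v), \partial_v)$ (where $\partial_v$ on $H^0$-coefficients is the Milnor-$K$-theoretic / Geisser--Levine residue) recovers $[\overline{a}, \partial_v\{b_0, \dots, b_i\}$, which is Totaro's formula \eqref{eqn Totaro residue}. Concretely, one writes the cocycle representing $[a, b_0, \dots, b_i\}$ using the Artin--Schreier description: a class in $H^1(K, \nu_1(i+1)) = \Omega^{i+1}_K / (d\Omega^i_K + (\mathcal{P} - 1)\text{-image})$ corresponds under $\delta$ to the Galois cocycle $\sigma \mapsto (\sigma(c) - c) \cdot \{b_0, \dots, b_i\}$ where $c^p - c = a$; one then checks that the decomposition $H^1(K, \nu_1(i+1)) \supset H^1_{\mathrm{tame},v}$ splitting off $H^1(k(v), H^0(K^{sh}, \nu_1(i+1)))$ sends this cocycle to the cocycle valued in the residue-field coefficients whose residue is governed by the ordinary tame symbol $\partial_v\{b_0, \dots, b_i\}$ in Milnor $K$-theory mod $p$, tensored with the Artin--Schreier class of $\overline{a}$.

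The main obstacle I expect is the careful bookkeeping of the two identifications of $H^1_{\mathrm{tame},v}$: Totaro's description is via the Artin--Schreier sequence and the explicit symbols $[a, b_0, \dots, b_i\}$, whereas ours is via the Hochschild--Serre spectral sequence for $K \to K^{sh}$ and the cocycle formula of Remark \ref{remark cocycles}. Reconciling these amounts to verifying that, for $v(a) \ge 0$, the element $a \in \mathcal{O}_{K_v}$ has Teichm\"uller-type reduction compatible with the projection $\Gamma_{K_v} \twoheadrightarrow \Gamma_{k(v)}$, so that the Artin--Schreier root $c$ of $a$ lives in the unramified (hence strict Henselization) tower and its coboundary descends to the Artin--Schreier class of $\overline{a}$ over $k(v)$; meanwhile $\{b_0, \dots, b_i\} \in \KM_{i+1}(K_v)/p$ has residue $\partial_v\{b_0, \dots, b_i\} \in \KM_i(k(v))/p = H^0(k(v)^{sep}, \nu_1(i))$ by the Geisser--Levine identification, matching the residue map on the cycle module $H^0(-, \nu_1(\ast))$ used in Definition \ref{tameResidue}. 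Once this compatibility is in place, both composites yield $[\overline{a}, \partial_v\{b_0, \dots, b_i\}$ on generators, and since the generators span $H^1_{\mathrm{tame},v}$ the maps agree. Finally I would note that the descent from $K_v$ back to $K$ is automatic since both $\partial_v$ and $\partial'_v$ are by definition obtained by restricting to $K_v$ first.
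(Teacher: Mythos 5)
Your proposal is correct and follows essentially the same route as the paper: reduce to the complete case, write the generator $[a,b_0,\dots,b_i\}$ as a cup product whose cocycle is $\sigma \mapsto (\sigma(a')-a')\,(db_0/b_0)\wedge\cdots\wedge(db_i/b_i)$ with $\mathcal{P}(a')=a$, observe that tameness lets the cocycle factor through $\Gamma_{k(v)}$ with the Artin--Schreier root taken in $K^{sh}$ of nonnegative valuation, and conclude that reduction of coefficients gives $[\overline{a},\partial_v\{b_0,\dots,b_i\}$. The compatibility you flag as the main obstacle, namely $[\overline{a}](\overline{\sigma})=\overline{[a](\sigma)}$, is exactly the computation the paper carries out.
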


\begin{proof}
We may assume that $K$ is complete. 
Notice that any $\alpha := [a, b_0, \dots, b_i\} \in H^{i+2,i+1}_{\mathrm{tame},v}(K)$ is the cup product $[a] \smallsmile \{b_0,\dots, b_i\}$, where $[a]\in H^1(K, \mathbb{Z}/p\mathbb{Z})$ is the class of $a$ under the isomorphism $H^1(K, \mathbb{Z}/p\mathbb{Z}) \cong K/\mathcal{P}(K)$ and $\{b_0,\dots, b_i\} \in H^0(K, \nu_1(i+1)).$ As a cycle, $\alpha$ is given by $\alpha(\sigma) = [a](\sigma)(db_0/b_0)\wedge\dots\wedge(db_i/b_i)$ for $\sigma \in \Gamma_K$. Since $\alpha$ lies in the image of $H^1(k, H^0(K^{sh}, \nu_r(i+1))) \rightarrow H^1(k(v), \nu_r(i+1))$, it factors through the quotient $\Gamma_K \rightarrow \Gamma_{k(v)}$. Therefore, $\partial_v(\alpha)\overline{(\sigma)} = \overline{[a](\sigma)}\partial_v\{b_0,\dots, b_i\}$.

Therefore, to prove the lemma, it suffices to show that $[\overline{a}](\overline{\sigma}) = \overline{[a](\sigma)}$. We can explicitly compute $[a](\sigma)$ using the Artin-Schreier sequence; let $a' \in K^{sep}$ be such that $\mathcal{P}(a')=a$. Then $[a](\sigma) = \sigma(a')-a'$. By the assumptions on $\alpha$, we may take $a'\in K^{sh}$ with $v(a')\ge 0$. Then $\mathcal{P}(\overline{a'}) = \overline{a}$ so that $\overline{[a](\sigma)} = \overline{\sigma(a') - a'} = \sigma(\overline{a'}) - \overline{a'} = [\overline{a}](\overline{\sigma}).$
\end{proof}

We now show that the restriction, corestriction, and multiplication by Milnor K-theory on $H^1(-,\mathcal{G}_{i})$ restrict to $H^1_{\mathrm{tame}}(-/k, \mathcal{G}_{i}).$ From this it follows automatically that the cycle pre-module axioms {\bf (R1a-c)} and {\bf (R2a-c)} hold for the groups $H^1_{\mathrm{tame}}(-/k,\mathcal{G}_i)$.

\begin{lemma}
\label{lemma res}
Let $K\rightarrow L$ be a field extension over $k$. The restriction map $\phi_*\colon H^1(K,\mathcal{G}_{i}) \rightarrow H^1(L,\mathcal{G}_{i})$ restricts to a map $H^1_{\mathrm{tame}}(K/k, \mathcal{G}_{i}) \rightarrow H^1_{\mathrm{tame}}(L/k, \mathcal{G}_{i})$, making $H^1_{\mathrm{tame}}(-/k,\mathcal{G}_{i})$ into a functor on the category of finitely generated fields over $k$.
\end{lemma}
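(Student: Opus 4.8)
The plan is to reduce the statement to a pointwise check at the geometric valuations of $L$ and then to argue by cases. Since $H^1_{\mathrm{tame}}(L/k,\mathcal{G}_i) = \bigcap_{w \in Val(L/k)} H^1_{\mathrm{tame},w}(L,\mathcal{G}_i)$, it suffices to fix $\alpha \in H^1_{\mathrm{tame}}(K/k,\mathcal{G}_i)$ and $w \in Val(L/k)$ and to show that $\phi_*(\alpha) \in H^1_{\mathrm{tame},w}(L,\mathcal{G}_i)$. Unwinding the definition (and using the Hochschild--Serre exact sequence exactly as in the proof of Lemma \ref{equivalenceoftameness}), a class $\beta \in H^1(F,\mathcal{G}_i)$ lies in $H^1_{\mathrm{tame},v}(F,\mathcal{G}_i)$ if and only if its image under the restriction map $H^1(F,\mathcal{G}_i) \to H^1(F_v^{sh},\mathcal{G}_i)$ vanishes; so it is enough to prove that the image of $\phi_*(\alpha)$ in $H^1(L_w^{sh},\mathcal{G}_i)$ is zero. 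Let $v = w|_K$. Because $w$ is of geometric type, hence discrete of rank $1$, the value group of $v$ embeds in $\mathbb{Z}$, so $v$ is either trivial or discrete of rank $1$; in the latter case a comparison of transcendence degrees (Abhyankar's inequality for $(K,v)$ together with $\mathrm{trdeg}(L/K) \ge \mathrm{trdeg}(k(w)/k(v))$, valid since $w(L^\times)/v(K^\times)$ is torsion, and $\mathrm{trdeg}_k k(w) = \mathrm{trdeg}_k L - 1$) forces the Abhyankar equality for $(K,v)$, whence $k(v)$ is finitely generated over $k$ of transcendence degree $\mathrm{trdeg}_k K - 1$, so that $v$ is of geometric type, i.e.\ $v \in Val(K/k)$.

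Suppose first that $v$ is nontrivial. Completing $K \hookrightarrow L$ along $v$ and $w$ gives $K_v \hookrightarrow L_w$, and lifting a cofinal family of finite separable residue extensions via Hensel's lemma gives $K_v^{sh} \hookrightarrow L_w^{sh}$; thus we obtain a commutative square with top row $K \hookrightarrow L$ and bottom row $K_v^{sh} \hookrightarrow L_w^{sh}$. By functoriality of \'etale cohomology, the image of $\phi_*(\alpha)$ in $H^1(L_w^{sh},\mathcal{G}_i)$ equals the image of the class $\alpha|_{K_v^{sh}} \in H^1(K_v^{sh},\mathcal{G}_i)$, and $\alpha|_{K_v^{sh}} = 0$ because $\alpha \in H^1_{\mathrm{tame},v}(K,\mathcal{G}_i)$ by hypothesis. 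Hence $\phi_*(\alpha)$ is tame at $w$ in this case.

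Now suppose that $v = w|_K$ is trivial. Then $K$ is contained in the valuation ring of $w$ and its maximal ideal meets $K$ in $(0)$, so $K$ sits inside $\mathcal{O}_{L_w^{sh}}$ and reduces isomorphically onto a subfield of the residue field $k(w)^{sep}$ of the Henselian ring $\mathcal{O}_{L_w^{sh}}$. Lifting, for each finite separable subextension $K'/K$ of $K^{sep}$, a simple residue root through $\mathcal{O}_{L_w^{sh}}$ (and making the lifts compatible, which is possible since $k(w)^{sep}$ is separably closed and Hensel lifts are unique), we obtain an embedding $K^{sep} \hookrightarrow L_w^{sh}$ extending $K \hookrightarrow L \hookrightarrow L_w^{sh}$. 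Therefore the image of $\phi_*(\alpha)$ in $H^1(L_w^{sh},\mathcal{G}_i)$ factors through $H^1(K^{sep},\mathcal{G}_i)$, which vanishes since the small \'etale site of the spectrum of a separably closed field is trivial. This settles the second case, so $\phi_*(\alpha) \in H^1_{\mathrm{tame},w}(L,\mathcal{G}_i)$ for every $w \in Val(L/k)$, i.e.\ $\phi_*(\alpha) \in H^1_{\mathrm{tame}}(L/k,\mathcal{G}_i)$. Finally, the identities $(\psi\circ\phi)_* = \psi_*\circ\phi_*$ and $(\mathrm{id})_* = \mathrm{id}$ on the tame subgroups are inherited from the corresponding identities for $H^1(-,\mathcal{G}_i)$, giving the asserted functoriality of $H^1_{\mathrm{tame}}(-/k,\mathcal{G}_i)$.

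I expect the case in which $w$ restricts to the trivial valuation on $K$ to be the main obstacle: there is no residue-theoretic input there, and one must instead use the vanishing of $H^1$ over a separably closed field together with a Hensel-lifting argument to place a separable closure of $K$ inside $L_w^{sh}$. A second point deserving care is the assertion that a nontrivial restriction of a geometric valuation is again geometric, where Abhyankar's inequality enters.
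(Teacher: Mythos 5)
Your proof is correct and is essentially the argument the paper intends: the paper's own proof of Lemma \ref{lemma res} simply defers to \cite[Lemma 4.6]{Otabe2023}, whose argument is exactly your reduction to a single $w\in Val(L/k)$ followed by the case division on whether $w|_K$ is trivial (where $K\rightarrow L_w^{sh}$ factors through a separably closed field, killing all of $H^1$) or nontrivial (where $w|_K$ is again a geometric valuation and one compares the strict henselizations $K_v^{sh}\hookrightarrow L_w^{sh}$). Your two flagged points --- the Abhyankar-inequality verification that a nontrivial restriction of a geometric valuation is geometric, and the Hensel lifting of $K^{sep}$ into $L_w^{sh}$ --- are precisely the details the paper leaves implicit, and both are handled correctly.
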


\begin{proof}
The proof is exactly analogous to \cite[Lemma 4.6]{Otabe2023}.
\end{proof}

\begin{lemma}
\label{lemma cores}
Let $\phi:K\rightarrow L$ be a finite field extension over $k$. Then the co-restriction map $\phi^*:H^1(L,\mathcal{G}_{i}) \rightarrow H^1(K,\mathcal{G}_{i})$ restricts a map $H^1_{\mathrm{tame}}(K/k, \mathcal{G}_{i}) \rightarrow H^1_{\mathrm{tame}}(L/k, \mathcal{G}_{i})$.
\end{lemma}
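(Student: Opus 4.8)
The plan is to verify tameness one valuation at a time and reduce the statement to the base-change axiom $\mathbf{(R1c)}$. Since $\phi^*$ maps $H^1(L,\mathcal{G}_i)$ to $H^1(K,\mathcal{G}_i)$, what we must show is that $\phi^*$ carries $H^1_{\mathrm{tame}}(L/k,\mathcal{G}_i)$ into $H^1_{\mathrm{tame}}(K/k,\mathcal{G}_i)$. So fix $\alpha\in H^1_{\mathrm{tame}}(L/k,\mathcal{G}_i)$ and $v\in Val(K/k)$. Unwinding the definition of $H^1_{\mathrm{tame},v}(K,\mathcal{G}_i)$ and using functoriality of restriction $\mathbf{(R1a)}$, it suffices to show that the image of $\phi^*(\alpha)$ under $\mathrm{res}_{K\to K_v^{sh}}=\mathrm{res}_{K_v\to K_v^{sh}}\circ\mathrm{res}_{K\to K_v}$ vanishes in $H^1(K_v^{sh},\mathcal{G}_i)$.

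Next I would apply $\mathbf{(R1c)}$ to the finite extension $\phi\colon K\to L$ and the extension $K\to K_v^{sh}$, the latter viewed as a filtered colimit of finite \'etale extensions of $K_v$ so that the relevant cohomology is computed by continuity. With $R:=K_v^{sh}\otimes_K L$ this gives
\[
\mathrm{res}_{K\to K_v^{sh}}\circ\phi^*=\sum_{\mathfrak p\in\Spec R}\ell(R_{\mathfrak p})\,\mathrm{cor}_{(R/\mathfrak p)/K_v^{sh}}\circ\mathrm{res}_{L\to R/\mathfrak p},
\]
and since \'etale cohomology is insensitive to nilpotents we may replace $R$ by $R_{\mathrm{red}}$. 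The key point is to identify $\Spec R_{\mathrm{red}}$ with a disjoint union of strict Henselizations. The reduced ring underlying $L\otimes_K K_v$ is $\prod_{w\mid v}L_w$; as $K_v$ is Henselian, $K_v^{sh}$ is its maximal unramified extension, so base-changing each $L_w$ along $K_v\to K_v^{sh}$ and reducing yields a finite product of fields, each of which is a filtered union of finite unramified extensions of $L_w$ — hence a Henselian discrete valuation field whose residue field is a purely inseparable extension of $k(v)^{sep}$. Because a purely inseparable extension of a separably closed field is separably closed, this residue field is $k(w)^{sep}$, so each such factor is canonically the strict Henselization $L_w^{sh}$. Therefore every $R/\mathfrak p$ occurring in the formula is a copy of some $L_w^{sh}$ with $w\mid v$, and the structure map $L\to R/\mathfrak p$ factors as $L\to L_w\to L_w^{sh}$.

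It then remains to observe that $\alpha$, being tame on $L$, is in particular tame at every $w\in Val(L/k)$ lying over $v$; by definition this means $\mathrm{res}_{L\to L_w}(\alpha)\in\Ker\bigl(H^1(L_w,\mathcal{G}_i)\to H^1(L_w^{sh},\mathcal{G}_i)\bigr)$, so $\mathrm{res}_{L\to R/\mathfrak p}(\alpha)=0$ for every $\mathfrak p$. Consequently each summand in the displayed formula is a corestriction of $0$, whence $\mathrm{res}_{K\to K_v^{sh}}(\phi^*\alpha)=0$; as $v$ was arbitrary, $\phi^*\alpha\in H^1_{\mathrm{tame}}(K/k,\mathcal{G}_i)$. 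The cycle-premodule axioms $\mathbf{(R1b)}$, $\mathbf{(R2b)}$, $\mathbf{(R2c)}$ for the groups $H^1_{\mathrm{tame}}(-/k,\mathcal{G}_i)$ then follow immediately from the corresponding axioms for $H^1(-,\mathcal{G}_i)$ together with Lemma \ref{lemma res} and the present lemma. I expect the main obstacle to be the bookkeeping in the middle paragraph: pinning down the connected components of $\Spec R_{\mathrm{red}}$ as strict Henselizations, correctly handling the possibly inseparable residue extensions $k(w)/k(v)$, and making the continuity arguments for the cohomology of the non–finite-type rings $K_v^{sh}$ and $L_w^{sh}$ precise (the latter being used tacitly throughout the paper).
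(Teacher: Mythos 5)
Your argument is correct, but it takes a genuinely different route from the paper's. The paper reduces, as you do, to the completions (implicitly via $\mathbf{(R1c)}$ and $(L\otimes_K K_v)_{\mathrm{red}}\cong\prod_{w\mid v}L_w$), but then switches to the \emph{image} description of the tame subgroup: it uses the identification $H^1_{\mathrm{tame},w}(L_w,\mathcal{G}_i)\cong H^1(k(w),H^0(L_w^{sh},\mathcal{G}_i))$ coming from Hochschild--Serre (as in Lemma \ref{equivalenceoftameness}) and invokes the compatibility of the Hochschild--Serre spectral sequence with corestriction to see that $\phi^*$ carries the $E_2^{1,0}$-part for $L_w$ into the $E_2^{1,0}$-part for $K_v$. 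You instead stay with the \emph{kernel} description, push the base change all the way up to $K_v^{sh}$, identify the factors of $(K_v^{sh}\otimes_KL)_{\mathrm{red}}$ with the fields $L_w^{sh}$, and kill every term of the Mackey formula because $\alpha$ already dies in each $H^1(L_w^{sh},\mathcal{G}_i)$. What each approach buys: yours avoids any appeal to spectral-sequence functoriality with transfers and makes the vanishing completely explicit, at the cost of the bookkeeping you flag yourself --- the identification of the connected components of $\Spec(K_v^{sh}\otimes_KL)_{\mathrm{red}}$ as strict Henselizations (including the purely inseparable residue extensions) and a colimit argument to extend $\mathbf{(R1c)}$ beyond finitely generated fields; the paper's version packages exactly that content into the (also not entirely formal) statement that Hochschild--Serre is functorial for corestrictions. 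Note also that the reliance on non-finitely-generated fields such as $K_v$ and $K_v^{sh}$ is common to both proofs, so your version introduces no new foundational debt there. Your closing remark that the remaining premodule axioms for $H^1_{\mathrm{tame}}(-/k,\mathcal{G}_i)$ follow formally matches the paper's stance.
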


\begin{proof}
It suffices to show that for any valuation $v$ on $K$ and any valuation $w$ on $L$ restricting to $v$, the co-restriction map $H^1(L_w,\mathcal{G}_{i}) \rightarrow H^1(K_v,\mathcal{G}_{i})$ restricts to a map $H^1_{\mathrm{tame},v}(K_v/k, \mathcal{G}_{i}) \rightarrow H^1_{\mathrm{tame},w}(L_w/k, \mathcal{G}_{i})$. This follows from the following commutative diagram, which in turn follows from the functoriality of the Hochschild-Serre spectral sequence with co-restriction maps. \[\begin{tikzcd}
{H^1(L_w,\mathcal{G}_{i})} & {H^1(K_v,\mathcal{G}_{i})} \\
{H^1(k(w),H^0(L^{sh}_w,\mathcal{G}_{i}))} & {H^1(k(v),H^0(K^{sh}_v,\mathcal{G}_{i})).}
\arrow[from=1-1, to=1-2]
\arrow[hook, from=2-1, to=1-1]
\arrow[from=2-1, to=2-2]
\arrow[hook, from=2-2, to=1-2]
\end{tikzcd}\]
\end{proof}

\begin{lemma}
\label{lemma K-action}
The action of Milnor K-theory $\KM_i(K)\times H^1(K, \mathcal{G}_j) \rightarrow H^1(K,\mathcal{G}_{i+j})$ restricts to an action $\KM_i(K)\times H^1_{\mathrm{tame}}(K/k, \mathcal{G}_j) \rightarrow H^1_{\mathrm{tame}}(K/k,\mathcal{G}_{i+j})$.
\end{lemma}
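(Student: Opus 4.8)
The plan is to verify the claim one valuation at a time, using that $H^1_{\mathrm{tame}}(K/k, \mathcal{G}_\bullet) = \bigcap_{v \in Val(K/k)} H^1_{\mathrm{tame},v}(K, \mathcal{G}_\bullet)$. Fix $\alpha \in \KM_i(K)$, $\rho \in H^1_{\mathrm{tame}}(K/k, \mathcal{G}_j)$ and $v \in Val(K/k)$, and write $\varphi \colon K \to K_v$ and $\psi \colon K_v \to K_v^{sh}$ for the two field extensions appearing in the definition of the tame subgroup. Unwinding that definition, what must be shown is $\psi_*\varphi_*(\alpha \cdot \rho) = 0$, since this says precisely that $\varphi_*(\alpha\cdot\rho)$ lies in $\Ker\big(H^1(K_v,\mathcal{G}_{i+j})\to H^1(K_v^{sh},\mathcal{G}_{i+j})\big)$, i.e. that $\alpha \cdot \rho \in H^1_{\mathrm{tame},v}(K, \mathcal{G}_{i+j})$.

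The key --- and essentially only --- inputs are: (i) the action of Milnor $K$-theory on $M_\bullet = H^1_{\text{\'et}}(-, \mathcal{G}_\bullet)$ commutes with restriction along field extensions, which is the cycle-premodule axiom $\mathbf{(R2a)}$ already recorded to hold for $M_\bullet$, giving $\psi_*\varphi_*(\alpha \cdot \rho) = (\psi_*\varphi_*\alpha) \cdot (\psi_*\varphi_*\rho)$; and (ii) the action is additive in its $M_\bullet$-argument --- indeed bilinear, being induced by cup product in Galois cohomology --- and so annihilates a zero second argument. Since $\rho \in H^1_{\mathrm{tame},v}(K, \mathcal{G}_j)$ is exactly the statement $\psi_*\varphi_*(\rho) = 0$, combining (i) and (ii) yields $\psi_*\varphi_*(\alpha \cdot \rho) = 0$. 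As $v \in Val(K/k)$ was arbitrary, $\alpha \cdot \rho \in H^1_{\mathrm{tame}}(K/k, \mathcal{G}_{i+j})$; together with Lemmas \ref{lemma res} and \ref{lemma cores} this also gives the cycle-premodule axioms $\mathbf{(R2a)}$, $\mathbf{(R2b)}$ and $\mathbf{(R2c)}$ for the tame groups.

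I do not anticipate a genuine obstacle. The only point deserving a moment's care is to check that the two ingredients above apply to the extensions at hand --- the completion $K \to K_v$ and the strict Henselization $K_v \to K_v^{sh}$, whose residue extension may be inseparable --- but this is fine because $\mathbf{(R1a)}$ and $\mathbf{(R2a)}$ hold for $M_\bullet$ in full generality, before any passage to the tame subgroup, as discussed after Notation \ref{notation G}. One could alternatively argue via the description of the tame subgroup as the image of $H^1(k(v), H^0(K_v^{sh}, \mathcal{G}_\bullet))$ in $H^1(K_v,\mathcal{G}_\bullet)$, but that would require tracking the compatibility of the action with the Hochschild--Serre edge maps; the kernel description keeps the argument to a couple of lines, so that is the route I would take.
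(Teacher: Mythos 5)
Your proof is correct. It takes the second of the two equivalent descriptions of $H^1_{\mathrm{tame},v}$ that you mention at the end, whereas the paper takes the first: the paper works with the image of the inflation map $H^1(k(v), H^0(K^{sh},\mathcal{G}_j)) \hookrightarrow H^1(K_v,\mathcal{G}_j)$ and, at the level of $1$-cocycles, exhibits an explicit preimage of $\theta\cdot\alpha$, namely the cocycle $\overline{\sigma}\mapsto \theta\cdot\beta(\overline{\sigma})$ where $\beta$ inflates to $\alpha$. Your route via the kernel description is the slicker of the two: it needs only $\mathbf{(R2a)}$ applied to the composite $K\to K_v\to K_v^{sh}$ together with additivity of the action in the $M_\bullet$-variable, and it sidesteps the verifications (left implicit in the paper) that $\theta\cdot\beta(\overline{\sigma})$ again lies in the $\Gamma_{k(v)}$-submodule $H^0(K^{sh},\mathcal{G}_{i+j})$ and that the cocycle condition is preserved. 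What the paper's cocycle-level argument buys in exchange is an explicit formula for the class of $\theta\cdot\alpha$ in $H^1(k(v),H^0(K^{sh},\mathcal{G}_{i+j}))$, which is in the spirit of the surrounding computations (Remark \ref{remark cocycles}, Proposition \ref{prop R3}). Both arguments share the caveat you correctly flag: the action and its compatibility with restriction must be available over the non-finitely-generated fields $K_v$ and $K_v^{sh}$, which holds because the action is defined at the level of sheaves and cup products and is therefore compatible with arbitrary base change.
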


\begin{proof}
Let $v\in Val(K/k)$ and let $\alpha \in H^1_{\mathrm{tame},v}(K, \mathcal{G}_j)$ and $\theta \in \KM_i(K)$. We may assume that $v$ is complete. Then $\theta\cdot\alpha$ is the cocycle sending $\sigma \in \Gamma_K$ to $\theta\cdot\alpha(\sigma)$. Now, since $\alpha$ is in the tame subgroup, it is the image of some $\beta \in H^1(k(v), H^0(K^{sh},\mathcal{G}_{j}))$. So if we let $\beta' \in H^1(k(v), H^0(K^{sh},\mathcal{G}_{i+j}))$ be the cocycle sending $\overline{\sigma} \in \Gamma_{k(v)}$ to $\theta\cdot\beta(\overline{\sigma})$, then $\sigma\cdot\alpha$ is the image of $\beta'$. Therefore, $\sigma\cdot\alpha \in H^1_{\mathrm{tame}}(K/k,\mathcal{G}_{i+j})$
\end{proof}

We now verify the axioms \textbf{R3(a-e)} below.  This may be well-known to experts in the case of logarithmic de Rham-Witt sheaves.

\begin{proposition}
\label{prop R3}

Let $\varphi: K \rightarrow L$ be a field extension in $\Fields_k$ and let $w$ be a valuation on $L$ restricting to a valuation $v$ on $K$. Let $\overline{\varphi}: k(v) \rightarrow k(w)$ be the induced extension of residue fields and $i\in \mathbb{N}$. The following relations hold.
\begin{enumerate}[label=$\mathbf{R3\alph*}$.]

\item If $v$ is a nontrivial valuation with ramification index $e$ and $\partial_v: H^1_{\mathrm{tame}}(K/k, \mathcal{G}_{i+1}) \rightarrow H^1(k(v),\mathcal{G}_{i} )$ and $\partial_w: H^1_{\mathrm{tame}}(L/k, \mathcal{G}_{i+1})  \rightarrow H^1(k(w), \mathcal{G}_{i})$ are the residue maps, and $\varphi_*$ and $\-{\varphi}_*$ are the restriction maps, then
\[
\partial_w \circ \varphi_* = e \cdot \-{\varphi}_* \circ \partial_v. 
\]

\item Assume that $\varphi$ is a finite extension and let $\partial _{v'}: H^1_{\mathrm{tame}}(K/k, \mathcal{G}_{i+1}) \rightarrow H^1(k(v'), \mathcal{G}_{i})$ be the residue maps. Then we have 
\[
\partial_v\circ\varphi^*=\sum_{v'}\varphi_{v'}^* \circ \partial_{v'},
\] 
where $v'$ runs through the extensions of $v$ to $L$ and $\-\varphi_{v'}: k(v) \to k(v')$ denotes the induced extension of residue fields. 

\item If $w$ is trivial on $K$ (that is, $v$ is trivial), with the same notation as in $\mathbf{R3a}$, we have 
\[
\partial_w \circ \varphi_* = 0. 
\]

\item Suppose $w$ is trivial on $K$ and let $\-\varphi: K \to k(w)$ denote the induced map. For any uniformizer $\pi$ for $w$, let $s_w^\pi: H^1_{\mathrm{tame}}(L/k, \mathcal{G}_{i+1}) \rightarrow H^1(k(w),\mathcal{G}_{i})$ be the specialization map. Then we have
\[
s_w^\pi \circ \varphi_* = \-\varphi_*. 
\]

\item For a unit $u$ with respect to $v$ and for any $\alpha \in H^1_{\mathrm{tame}}(K/k, \mathcal{G}_{i})$, we have
\[
\partial_v(\{u\} \cdot \alpha)=-\{\-u\} \cdot \partial_v(\alpha).
\]
\end{enumerate}
\end{proposition}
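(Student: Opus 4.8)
The plan is to handle all five relations by a single mechanism: reduce to complete base fields and then transfer the statement to the cycle module $N_\ast := H^0(-,\mathcal{G}_\ast) = \mathcal{G}_\ast(-)$ associated with $\mathcal{G}$ via \cite{Deglise}, for which the cycle-module residue maps $\partial_v^N$ and the full list of axioms $\mathbf{R3a}$--$\mathbf{R3e}$ are available. Concretely, in the complete case Lemma \ref{equivalenceoftameness} gives $H^1_{\mathrm{tame},v}(K,\mathcal{G}_{i+1}) \cong H^1(\Gamma_{k(v)},N_{i+1}(K^{sh}))$ and Remark \ref{remark cocycles} describes $\partial_v$ on cocycles by $\partial_v(\alpha)(\bar\sigma)=\partial_v^{N}(\alpha(q^{-1}\bar\sigma))$, where $q\colon \Gamma_K\twoheadrightarrow \Gamma_{k(v)}$ is the canonical surjection and $\partial_v^N\colon N_{i+1}(K^{sh})\to N_i(k(v)^{sep})$ is the residue of $N_\ast$ for the valuation on $K^{sh}=K_{ur}$, which has residue field $k(v)^{sep}$ and ramification index $1$ over $v$. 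Lemmas \ref{lemma res}, \ref{lemma cores} and \ref{lemma K-action} ensure that the classes produced by restriction, corestriction and the $\KM$-action stay inside the tame subgroups throughout.

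For the reduction, recall that $\partial_v = \partial_v^{K_v}\circ \res_{K\to K_v}$. Since restriction, corestriction, the $\KM$-action and the specialization map all commute with restriction to a completion, each of $\mathbf{R3a}$, $\mathbf{R3c}$, $\mathbf{R3d}$, $\mathbf{R3e}$ reduces immediately to its analogue over complete fields (for $\mathbf{R3a}$ one completes $K$ at $v$ and $L$ at $w$, using the canonical embedding $K_v\hookrightarrow L_w$ when $w|_K=v$; for $\mathbf{R3c}$, $\mathbf{R3d}$ one completes $L$ at $w$, keeping $v=w|_K$ trivial). For $\mathbf{R3b}$ one uses in addition the base-change decomposition $L\otimes_K K_v\cong\prod_{v'|v}L_{v'}$ and the localization formula $\mathbf{R1c}$ for the corestriction, which rewrites $\res_{K\to K_v}\circ\varphi^\ast$ as $\sum_{v'|v}\varphi_{v'}^\ast\circ\res_{L\to L_{v'}}$; since $v$ then has a single extension to each $L_{v'}$, $\mathbf{R3b}$ over $k$ follows by summing the single-term complete-field identities $\partial_v^{K_v}\circ\varphi_{v'}^\ast=\bar\varphi_{v'}^\ast\circ\partial_{v'}^{L_{v'}}$.

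In the complete case each relation is a short cocycle computation using the formula above. For $\mathbf{R3a}$ one runs it through the morphism of Hochschild--Serre sequences induced by $K_v\to L_w$; the residue of $N_\ast$ that appears is that of $L_w^{sh}/K_v^{sh}$, whose ramification index equals $e(w/v)=e$ because the unramified extensions $K_v^{sh}/K_v$ and $L_w^{sh}/L_w$ do not change it, and $\mathbf{R3a}$ for $N_\ast$ then yields the factor $e$. Relations $\mathbf{R3b}$ and $\mathbf{R3e}$ follow the same way from $\mathbf{R3b}$, resp. $\mathbf{R3e}$, for $N_\ast$, using the explicit cocycle formula for corestriction from Remark \ref{remark cocycles} in the case of $\mathbf{R3b}$. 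For $\mathbf{R3c}$, since $w$ is trivial on $K$ and $L$ is complete of equal characteristic, one chooses a Cohen coefficient field $k(w)^{sep}\hookrightarrow L^{sh}$ containing the image of $K^{sep}$; then $\varphi_\ast\alpha$, as a cocycle valued in $N_{i+1}(L^{sh})$, is pulled back from $N_{i+1}(k(w)^{sep})$ and hence killed by $\partial_w^N$ by $\mathbf{R3c}$ for $N_\ast$. Finally $\mathbf{R3d}$ is immediate from the definition of the specialization map $s_w^\pi(\rho)=\partial_w(\{\pi\}\cdot\rho)$ together with $\mathbf{R3d}$ for $N_\ast$ (equivalently, from $\mathbf{R3c}$ and $\mathbf{R3e}$ as in Rost's derivation).

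The main obstacle is the reduction step for $\mathbf{R3b}$: the corestriction $\varphi^\ast$ is defined only indirectly, via the Leray spectral sequence for a finite morphism of models, so one must verify that it behaves as expected under completion --- equivalently, that under the Hochschild--Serre identification it is carried to the corestriction of the cycle module $N_\ast$ recalled in Remark \ref{remark cocycles}. Once this compatibility is in place, each of $\mathbf{R3a}$--$\mathbf{R3e}$ becomes a formal consequence of the corresponding axiom for $N_\ast$. A secondary, more routine, point is the coherent choice of Cohen coefficient fields along the tower $L\subset L^{sh}$ in $\mathbf{R3c}$, together with the bookkeeping of the quotient maps $\Gamma_F\twoheadrightarrow \Gamma_{k(v)}$.
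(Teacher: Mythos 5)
Your proposal follows essentially the same route as the paper: reduce to complete fields, identify the tame subgroup via Hochschild--Serre with $H^1(k(v),H^0(K^{sh},\mathcal{G}_{i+1}))$, and deduce each of $\mathbf{R3a}$--$\mathbf{R3e}$ from the corresponding cycle-module axiom for $H^0(-,\mathcal{G}_\ast)$ by the cocycle formula $\partial_v(\alpha)(\sigma)=\partial_v(\alpha(q^{-1}\sigma))$, with $\mathbf{R3b}$ reduced via $L\otimes_K K_v\cong\prod_{v'}L_{v'}$ and $\mathbf{R1c}$. The one step you flag but do not carry out --- the compatibility of the corestriction with the Hochschild--Serre identification --- is precisely the content of the paper's Lemma \ref{corestrictions-residues}, which is proved by an explicit computation with compatible set-theoretic splittings of the relevant Galois group extensions.
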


\begin{proof}
By the results of \cite{Deglise}, the functors $K \mapsto H^0(K,\mathcal{G}_{\ast})$ form a cycle module.  This fact will be used repeatedly.
We assume that $K$ and $L$ are complete with respect to $v$ and $w$ respectively. The general case follows easily from this.
We first prove $\mathbf{R3a}$.  Note that we have commutative diagrams
\[
\begin{xymatrix}{
{H^1_{\mathrm{tame},v}(K,\mathcal{G}_{i+1})} \ar[r] \ar[d]^-{\varphi_*}  & {H^1(k(v),H^0(K^{sh},\mathcal{G}_{i+1}))} \ar[d]^-{H^1(\varphi_*)}\\
{H^1_{\mathrm{tame},w}(L,\mathcal{G}_{i+1})} \ar[r]  & {H^1(k(w),H^0(L^{sh},\mathcal{G}_{i+1}))} }
\end{xymatrix}
\]
and 
\[
\begin{xymatrix}{ 
{H^1(k(v),H^0(K^{sh},\mathcal{G}_{i+1}))} \ar[r] \ar[d]^-{H^1(\varphi_*)} & {H^1(k(v),H^0(k(v)^{sep},\mathcal{G}_{i}))} \ar[d]^-{e \cdot \-\varphi_*} \\
{H^1(k(w),H^0(L^{sh},\mathcal{G}_{i+1}))} \ar[r] & {H^1(k(w),H^0(k(w)^{sep},\mathcal{G}_{i}))}}, 
\end{xymatrix}
\]
where the commutativity of the latter diagram follows from that of the corresponding diagram for $H^0(-, \mathcal{G}_{\ast})$. This implies $\mathbf{R3a}$. 
We prove $\mathbf{R3b}$ separately in Lemma \ref{corestrictions-residues}. We next prove $\mathbf{R3c}$.   Let $\alpha \in H^1_{\mathrm{tame},v}(K, \mathcal{G}_i)$ and consider a $1$-cocycle representing $\alpha$.  By Remark \ref{remark cocycles}, we have 
\[
\partial_v \circ\varphi_*(\alpha)(\sigma)= \partial_v(\varphi_*\alpha(q^{-1}\sigma))=\partial_v(\varphi_*(\alpha(\tilde{\varphi}(q^{-1}\sigma))))=0,
\]
by the corresponding result for $H^0(-, \mathcal{G}_{\ast})$.  Now, suppose that $v$ is trivial on $E$ and let $\alpha \in M_i(E)$. We prove $\mathbf{R3d}$ by a similar computation involving cocycles as above.  For any $\sigma\in \Gamma_{k(v)}$,  
\[
\begin{split}
s^{\pi}_v \circ \varphi_* (\alpha) (\sigma) = \partial_v((\{-\pi\}\cdot \varphi_*\alpha)(\pi^{-1}\sigma)) &= \partial_v(\{-\pi\} \cdot \varphi_*(\alpha(\tilde{\varphi}(\pi^{-1}\sigma)))) \\
&= s^{\pi}_v\varphi_*(\alpha(\tilde{\varphi}(\pi^{-1}\sigma)))\\
&=\overline{\varphi}_*(\alpha(\tilde{\varphi}(\pi^{-1}\sigma)))=\overline{\varphi}_*(\alpha)(\sigma).
\end{split}
\]
For a unit $u$ with respect to $v$, we have
\[
\begin{split}
\partial_v(\{u\}\cdot\alpha) (\sigma) &= \partial_v(\{u\}\cdot \alpha(\pi^{-1}\sigma))=-\{\overline{u}\}\cdot \partial_v(\alpha(\pi ^{-1}(\sigma)))=-\{\overline{u}\} \cdot \partial_v(\alpha)(\sigma)\\
&= -\{\overline{u}\} \cdot \partial_v(\alpha) (\sigma).
\end{split}
\]
This proves $\mathbf{R3e}$.  

\end{proof}

Now we show the compatibility of the residue maps with the co-restriction maps, proving axiom $\mathbf{R3b}$. 
\begin{lemma}\label{corestrictions-residues}
Let $\phi\colon K\xrightarrow{}L$ be a finite extension of fields and let $v$ a valuation on $K$ and let ${v}_i$ be all the extensions of $v$ to $L$. Let $\phi_i\colon k(v)\xrightarrow{}k(v_i)$ be the induced extensions on the residue fields. Then we have the following commutative diagram. \[\begin{tikzcd}
{H^1_{\mathrm{tame}}(K/k, \mathcal{G}_{i+1})} & {H^1(k(v),\mathcal{G}_{i})} \\
{H^1_{\mathrm{tame}}(L/k, \mathcal{G}_{i+1})} & {\bigoplus_{i}H^1(k(v_i),\mathcal{G}_{i}).}
\arrow["{\partial_v}", from=1-1, to=1-2]
\arrow["{\phi^*}", from=2-1, to=1-1]
\arrow["{(\partial_{v_i})}", from=2-1, to=2-2]
\arrow["{\sum_i \phi_i^*}", from=2-2, to=1-2]
\end{tikzcd}\]
\end{lemma}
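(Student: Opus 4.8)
The plan is to reduce the statement to the case where $K$ and $L$ are complete, and then to deduce it from the cycle-module axiom $\mathbf{R3b}$ for the cycle module $K\mapsto H^0(K,\mathcal{G}_*)$, which holds by \cite{Deglise}. For the reduction, let $K_v$ be the completion of $K$ at $v$. Since the local ring of a geometric valuation is a localization of a finitely generated $k$-algebra, it is excellent, so $K_v/K$ is separable; hence $L\otimes_K K_v$ is reduced, and is therefore isomorphic to the finite product $\prod_w L_w$ of the completions of $L$ at the valuations $w$ above $v$, all factors being of length one. Applying axiom $\mathbf{R1c}$ to the finite extension $\phi\colon K\to L$ and to the completion $K\to K_v$ gives $\res_{K\to K_v}\circ\phi^* = \sum_w(\phi_w)^*\circ\res_{L\to L_w}$ with $\phi_w\colon K_v\to L_w$, and by Lemma \ref{lemma cores} each side carries the tame subgroup into the tame subgroup. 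Since, by Definition \ref{tameResidue}, $\partial_v$ (resp.\ each $\partial_w$) factors through $\res_{K\to K_v}$ (resp.\ $\res_{L\to L_w}$), the asserted commutativity follows term by term from the special case in which $K$ and $L$ are complete and $w$ is the unique valuation of $L$ above $v$; so from now on I assume this, and write $\overline{\phi}\colon k(v)\to k(w)$ for the induced residue extension.

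In this complete case I would translate everything into the cycle module $H^0(-,\mathcal{G}_*)$. By Lemma \ref{equivalenceoftameness} one has $H^1_{\mathrm{tame},v}(K,\mathcal{G}_{i+1})\cong H^1(\Gamma_{k(v)},H^0(K^{sh},\mathcal{G}_{i+1}))$, and likewise for $L$ (with $L^{sh}=L\cdot K^{sh}$); under these identifications $\partial_v$ is $H^1(\Gamma_{k(v)},\partial_v^{\mathrm{sh}})$, where $\partial_v^{\mathrm{sh}}\colon H^0(K^{sh},\mathcal{G}_{i+1})\to H^0(k(v)^{sep},\mathcal{G}_i)$ is the residue of the cycle module $H^0(-,\mathcal{G}_*)$ at the strictly Henselian discrete valuation ring of $K^{sh}$ (Remark \ref{remark cocycles}), and similarly for $\partial_w^{\mathrm{sh}}$. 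On the other hand, by the functoriality of the Hochschild--Serre spectral sequence along the morphism of group extensions $(\Gamma_L,I_L)\to(\Gamma_K,I_K)$ used in the proof of Lemma \ref{lemma cores} --- here $I_K=\Gal(K^{sep}/K^{sh})$ is the inertia group and $I_L=\Gamma_L\cap I_K$ --- I would identify the corestriction $\phi^*$ between the tame subgroups with the composite of the Galois corestriction along the open inclusion $\Gamma_{k(w)}\hookrightarrow\Gamma_{k(v)}$ with the coefficient map induced by the cycle-module corestriction $c_{K^{sh}/L^{sh}}\colon H^0(L^{sh},\mathcal{G}_{i+1})\to H^0(K^{sh},\mathcal{G}_{i+1})$ of $H^0(-,\mathcal{G}_*)$ for the finite extension $K^{sh}\subseteq L^{sh}$. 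Then I would conclude: $\mathbf{R3b}$ for $H^0(-,\mathcal{G}_*)$ applied to $K^{sh}\subseteq L^{sh}$ (which carries a unique valuation above that of $K^{sh}$) yields $\partial_v^{\mathrm{sh}}\circ c_{K^{sh}/L^{sh}}=c_{k(v)^{sep}/k(w)^{sep}}\circ\partial_w^{\mathrm{sh}}$; applying $H^1(\Gamma_{k(w)},-)$, then the Galois corestriction along $\Gamma_{k(w)}\hookrightarrow\Gamma_{k(v)}$, and using naturality of corestriction in the coefficients together with the same Hochschild--Serre identification now for the pair $k(v)\subseteq k(w)$, one obtains $\partial_v\circ\phi^*=\overline{\phi}^*\circ\partial_w$, as required.

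The step I expect to be the main obstacle is the identification in the second paragraph: checking that the corestriction on tame cohomology really is the Galois corestriction composed with the cycle-module corestriction of the strictly Henselian rings, and keeping the ramification index and the residue-field (in)separability under control throughout the bookkeeping. When $k(w)/k(v)$ is separable this is transparent --- then $k(w)^{sep}=k(v)^{sep}$ and $c_{k(v)^{sep}/k(w)^{sep}}$ is the identity --- but the general case rests on the compatibility of the transfer structure of $\mathcal{G}$ with the corestriction of the cycle module $H^0(-,\mathcal{G}_*)$ constructed in \cite{Deglise}, including its built-in inseparability factors. An alternative that avoids the spectral-sequence functoriality altogether is to argue entirely with $1$-cocycles, using the explicit descriptions of $\partial_v$ and of the corestriction map recalled in Remark \ref{remark cocycles}, and to reduce the claimed identity, term by term, to the already-established relations $\mathbf{R1c}$ and $\mathbf{R3b}$ for $H^0(-,\mathcal{G}_*)$.
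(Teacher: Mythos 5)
Your proposal is correct and follows essentially the same route as the paper: reduce to the complete case via $L\otimes_K K_v\cong\bigoplus_i L_{v_i}$ and axiom $\mathbf{R1c}$, then transport everything through the Hochschild--Serre identification and invoke the corresponding compatibility for the cycle module $H^0(-,\mathcal{G}_*)$ applied to $K^{sh}\to L^{sh}$. The one step you flag as the main obstacle --- that the corestriction on tame subgroups factors as the coefficient-level corestriction $H^0(L^{sh},\mathcal{G}_{i+1})\to H^0(K^{sh},\mathcal{G}_{i+1})$ followed by the Galois corestriction along $\Gamma_{k(w)}\hookrightarrow\Gamma_{k(v)}$ --- is exactly what the paper establishes, by first reducing to the case of Galois extensions and then carrying out the explicit $1$-cocycle computation with compatible set-theoretic splittings, i.e.\ the second of the two alternatives you propose.
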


\begin{proof}
  Consider the diagram
\[
\begin{xymatrix}{
		H^1_{\mathrm{tame}}(L/k,\mathcal{G}_{i+1}) \ar[r] \ar[d]_-{\phi^*} & \underset{i}{\bigoplus} ~ H^1_{\mathrm{tame}}(L_{v_i}/k,\mathcal{G}_{i+1}) \ar[r] \ar[d] & \underset{i}{\bigoplus} ~ H^1(k(v_i),\mathcal{G}_{i}) \ar[d]^-{\sum_{i}\phi_{v_i}^*} \\
		H^1_{\mathrm{tame}}(K/k,\mathcal{G}_{i+1}) \ar[r] & H^1_{\mathrm{tame}}(K_v/k,\mathcal{G}_{i+1}) \ar[r] & H^1(k(v), \mathcal{G}_{i})}
\end{xymatrix}
\]
in which the commutativity of the outer square is what we wish to prove.  The left square is commutative by $\mathbf{(R1)}$, since $L\otimes_{K}K_v\cong \bigoplus_{i} L_{v_i}$.  Thus, in order to prove the Lemma, we are reduced to proving it for the extensions $K_v \to L_{v_i}$.  Therefore, replacing $K$ by $K_v$ and $L$ by $L_{v_i}$, we may assume that $v$ is a complete valuation on $K$ that extends uniquely to $w$ on $L$. Let $\overline{\phi}$ denote the induced extension on residue fields.  By a standard argument (see \cite[Proof of Proposition 7.4.1]{Gille-Szamuely} for instance), we reduce to the case where $\phi$ and $\overline{\phi}$ are Galois extensions. We need to show the commutativity of the following diagram, in which all the vertical arrows are appropriate corestriction maps.
\[
\begin{xymatrix}{
		{H^1_{\mathrm{tame},w}(L,\mathcal{G}_{i+1})} \ar[r]^-{\cong} \ar[dd]_-{\phi^*}& {H^1(k(w),H^0(L^{sh},\mathcal{G}_{i+1}))} \ar[r] \ar[d]_-{{\phi^{sh}}^*}& {H^1(k(w),H^0(k(w)^{sep},\mathcal{G}_{i}))} \ar[d] \\
		& {H^1(k(w),H^0(K^{sh},\mathcal{G}_{i+1}))} \ar[r] \ar[d]_-{{\psi}^*} & {H^1(k(w),H^0(k(v)^{sep},\mathcal{G}_{i}))} \ar[d] \\
		H^1_{\mathrm{tame},v}(K,\mathcal{G}_{i+1}) \ar[r]^-{\cong} &  H^1(k(v),H^0(K^{sh}, \mathcal{G}_{i+1})) \ar[r] & H^1(k(v),H^0(k(w)^{sep},\mathcal{G}_{i})) }
\end{xymatrix}
\]
Since the analogue of the Lemma holds for $H^0(-, \mathcal{G}_{i+1})$, applying it to $L^{sh}\xrightarrow{\phi^{sh}}K^{sh}$ and then applying $H^1(k(w),-)$, we conclude that the top right square is commutative.  The bottom right square commutes because of the functoriality of corestriction. So, it suffices to show that the diagram on the left commutes.

We have the following commutative diagram of groups, in which $H$, $H'$ and $H''$ are defined so as to have exact rows and columns.
\[
\begin{xymatrix}{
		0 \ar[r] & H'' \ar[r]\ar[d] & H' \ar[r]\ar[d] & H  \ar[r]\ar[d] & 0 \\
		0 \ar[r] & \Gamma_L \ar[r]\ar[d] & \Gamma_K \ar[r]^-{p_1}\ar[d]^-{p_4} & \Gal(L/K) \ar[r]\ar[d]^-{p_2} &  0 \\
		0 \ar[r] & \Gamma_{k(w)} \ar[r] & \Gamma_{k(v)} \ar[r]^-{p_3} & \Gal(k(w)/k(v)) \ar[r]&  0}
\end{xymatrix}
\]
Choose (set-theoretic) splittings $s_1, s_2, s_3, s_4$ of $p_1, p_2, p_3, p_4$ respectively such that the lower right square in the above diagram commutes also when the maps $p_i$ are replaced by the splittings $s_i$.  The map ${\phi^{sh}}^*$ is given by applying the functor $H^1(k(w),-)$ to the norm map $N: H^0(L^{sh},\mathcal{G}_{i+1})\xrightarrow{}H^0(K^{sh},\mathcal{G}_{i+1})$.  Let $\alpha \in H^1_{\mathrm{tame}}(L/k,\mathcal{G}_{i+1})$. Then $\phi^*(\alpha)$ is represented by the $1$-cocycle that sends $\sigma \in \Gamma_K$ to 
\[
\begin{split}
	\phi^*(\alpha)(\sigma) &= \sum_{\tau\in\Gal(L/K)}s_1(\tau)^{-1}\alpha(\overline{s_1(\tau)\sigma s_1(\tau\sigma)^{-1}})\\
	&=\sum_{\omega\in\Gal(k(w)/k(v))}\sum_{h\in H}s_1(hs_2(\omega))^{-1}\alpha(\overline{s_1(hs_2(\omega))\sigma s_1(hs_2(\omega)\sigma)^{-1}})\\
	&= \sum_{\omega\in\Gal(k(w)/k(v))}s_3(\omega)^{-1}\sum_{h\in H}s_1(h)^{-1} \alpha(s_3(\omega)\overline{\sigma}s_3(\omega\overline{\sigma})^{-1}) \\
	&= \sum_{\omega\in\Gal(k(w)/k(v))}s_3(\omega)^{-1} {N}(\alpha(s_3(\omega)\overline{\sigma}s_3(\omega\overline{\sigma})^{-1}) \\
	&= \psi^* \circ {\phi^{sh}}^*(\alpha)(\sigma),
\end{split}
\]
where we have used the equalities $\overline{s_1(hs_2(\omega)}=\overline{s_1(\omega)}$, $\overline{s_1(hs_2(\omega)\sigma)^{-1}}=\overline{s_1(s_2(\omega)\sigma)^{-1}}$ and $s_1(s_2(\omega))=s_4(s_3(\omega))$ coming from the above diagram of Galois groups and the fact that the action of $s_4(s_3(\omega))$ on $H^0(L^{sh},\mathcal{G}_{i+1}))$ is the same as that of $s_3(\omega)$. This proves the lemma.
\end{proof}



Note that the data $\mathbf{(D1)}-\mathbf{(D4)}$ does not make $H^1_{\mathrm{tame}}(-/k, \mathcal{G}_{\ast})$ into a cycle pre-module since the residue map $\mathbf{(D4)}$ need not map $H^1_{\mathrm{tame}}(K/k, \mathcal{G}_{i+1})$ into $H^1_{\mathrm{tame}}(k(v)/k, \mathcal{G}_{i})$. However, by making a further assumption on $\mathcal{G}$, we can get a similar structure.

\begin{conventions}
\label{additional_assumptions_on_G}
From now on, we make the following additional assumption on $\mathcal{G}$: for a discrete valuation $v$ with residue field $k(v)$ and fraction field $K$ with $(k(v):k(v)^p)\le p^i$, we assume that $H^1_{\text{\'et}}(K^{sh}, \mathcal{G}_{i+1})=0$.
\end{conventions}

\begin{remark}
In the case of logarithmic de Rham-Witt sheaves $\@G = \nu_r(q)$, Conventions \ref{additional_assumptions_on_G} hold by a result of Kato \cite[page 150]{Kato1986}. 
\end{remark}

\begin{lemma}
\label{lemma tame=all}
For a sheaf $\@G$ satisfying Notation \ref{notation G} and Conventions \ref{additional_assumptions_on_G}, we have 
\[
H^1_{\mathrm{tame},v}(K, \mathcal{G}_{i+1}) = H^1(K, \mathcal{G}_{i+1}),
\]
for any valuation $v$ on $K$ such that $\deg (k(v):k(v)^p)\le p^i$.
\end{lemma}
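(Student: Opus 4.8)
The plan is to unwind the definition of the tame subgroup and reduce everything to the vanishing assumption in Conventions \ref{additional_assumptions_on_G}. By definition, $H^1_{\mathrm{tame},v}(K,\mathcal{G}_{i+1})$ is the preimage, under the restriction map $r\colon H^1(K,\mathcal{G}_{i+1}) \to H^1(K_v,\mathcal{G}_{i+1})$, of $\Ker\bigl(H^1(K_v,\mathcal{G}_{i+1}) \to H^1(K_v^{sh},\mathcal{G}_{i+1})\bigr)$. Hence it suffices to show that this kernel is all of $H^1(K_v,\mathcal{G}_{i+1})$, i.e.\ that the restriction map $H^1(K_v,\mathcal{G}_{i+1}) \to H^1(K_v^{sh},\mathcal{G}_{i+1})$ is the zero map; once this is known, its preimage under $r$ is automatically all of $H^1(K,\mathcal{G}_{i+1})$.

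To establish the vanishing, I would observe that the completion $K_v$ carries a discrete valuation with the same residue field $k(v)$, and that $K_v^{sh}$ is the fraction field of the strict Henselization of its valuation ring. Since $\deg(k(v):k(v)^p) \le p^i$ by hypothesis, Conventions \ref{additional_assumptions_on_G} applied to this valuation gives $H^1_{\text{\'et}}(K_v^{sh},\mathcal{G}_{i+1}) = 0$, and the zero map claim follows at once. If one prefers to invoke Conventions \ref{additional_assumptions_on_G} only for the field $K$ itself (which is finitely generated over $k$), one can instead factor the composite $H^1(K,\mathcal{G}_{i+1}) \to H^1(K_v,\mathcal{G}_{i+1}) \to H^1(K_v^{sh},\mathcal{G}_{i+1})$ through $H^1(K^{sh},\mathcal{G}_{i+1})$, using the map of fields $K^{sh} \to K_v^{sh}$ induced by the local homomorphism $\mathcal{O}_v^{sh} \to \widehat{\mathcal{O}}_v^{sh}$ of strict Henselizations (which is compatible with $K \to K^{sh} \to K_v^{sh}$ and with $K \to K_v \to K_v^{sh}$, all of these sitting inside $K_v^{sh}$); since $H^1(K^{sh},\mathcal{G}_{i+1}) = 0$ by Conventions \ref{additional_assumptions_on_G}, the composite is again zero.

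The only mildly delicate point, and the step I would be most careful about, is the bookkeeping with Henselizations and completions: either checking that $K_v^{sh}$ is indeed the fraction field of a discrete valuation ring with residue field $k(v)$ so that Conventions \ref{additional_assumptions_on_G} applies verbatim, or, in the second formulation, constructing the comparison map $K^{sh} \to K_v^{sh}$ (using that $\mathcal{O}_v^{sh} \to \widehat{\mathcal{O}}_v^{sh}$ is a local, hence injective, map of discrete valuation rings) and verifying commutativity of the resulting triangle of restriction maps on $H^1$. Beyond this, the argument is purely formal.
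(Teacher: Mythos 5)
Your proof is correct and takes essentially the same route as the paper: both arguments reduce the claim to the vanishing $H^1(K_v^{sh},\mathcal{G}_{i+1})=0$ supplied by Conventions \ref{additional_assumptions_on_G}, which forces $\Ker\bigl(H^1(K_v,\mathcal{G}_{i+1})\to H^1(K_v^{sh},\mathcal{G}_{i+1})\bigr)$ to be all of $H^1(K_v,\mathcal{G}_{i+1})$ and hence its preimage to be all of $H^1(K,\mathcal{G}_{i+1})$. The paper merely packages this via the Hochschild--Serre spectral sequence (the vanishing kills $E_2^{0,1}$, so the edge map $H^1(K_v,\mathcal{G}_{i+1})\to H^1(k(v),H^0(K_v^{sh},\mathcal{G}_{i+1}))$ is an isomorphism), whereas you state the kernel observation directly; the content is the same.
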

\begin{proof}
Observe the Hochschild-Serre spectral sequence for the field extension $K_v\rightarrow K_v^{sh}$ given by $E_2^{a,b}=H^a_{\text{\'et}}(k(v),H^b_{\text{\'et}}(K_v^{sh},\mathcal{G}_{i+1})) \Rightarrow H^{a+b}_{\text{\'et}}(K_v,\mathcal{G}_{i+1})$. By the assumption on $\mathcal{G}$, it follows that $E_2^{0,1}=0$.  This implies that we have an isomorphism $H^1_{\text{\'et}}(K_v,\mathcal{G}_{i+1})\xrightarrow{\sim}H^1_{\text{\'et}}(k(v),H^0(K_v^{sh},\mathcal{G}_{i+1}))$.
\end{proof}

Therefore, Definition \ref{tameResidue} gives a residue map $\partial_v:M_{i+1}(K)\rightarrow{}M_i(k(v))$. This map, along with the data $\mathbf{(D1)},\mathbf{(D2)}$ and $\mathbf{(D3)}$ gives rise to what we call a \textit{weak cycle pre-module} structure on $\{{M_i}\}_{i\in \mathbb{Z}}$.

\begin{definition}\label{def_differential}
Let $\mathcal{G}$ be an $\mathbb{A}^1$-invariant \'etale sheaf satisfying Conventions \ref{additional_assumptions_on_G}. Let 
\[
C_i(X,\mathcal{G}_{q}) = \bigoplus_{x\in X_{(i)}} H^1_{\text{\'et}}(k(x),\mathcal{G}_{i+q}).
\]
We define $d:C_{i+1}(X,\mathcal{G}_{q})\rightarrow C_i(X,\mathcal{G}_{q})$ as follows.  Let $x\in X_{(i+1)}$, $y\in X_{(i)}$ and $\alpha\in H^1(k(x),\mathcal{G}_{i+1+q})$. If $y\notin Z=\overline{\{x\}}$, then we set the $y$- component of $d(\alpha)$ to be zero.  Suppose $y\in Z=\overline{\{x\}}$ and consider $Z$ with the reduced induced subscheme structure. Then $R=\mathcal{O}_{Z,y}$ is a $1$-dimensional $k$-algebra with residue field $k(y)$ and fraction field $k(x)$. Let $R'$ be its normalization; this is a $1$-dimensional semilocal finite $R$-algebra. For each valuation $w$ of $k(x)$ corresponding to the maximal ideals of $R'$, we get a finite extension $\varphi_w\colon k(y)\xrightarrow{}k(w)$.  We define the $y$-component of $d(\alpha)$ in this case to be $\sum _w \varphi^*_w\circ \partial_w(\alpha)$. 
\end{definition}

\begin{theorem}
\label{theorem existence_Kato_complex}
Let $\mathcal{G}$ be an $\mathbb{A}^1$-invariant \'etale sheaf satisfying Conventions \ref{additional_assumptions_on_G}. Then the groups $M_i(K)=H^1_{\text{\'et}}(K,\mathcal{G}_i)$ for $i\in \mathbb{Z}$ satisfy axioms {\bf (R1a-c)}, {\bf (R2a-c)} and {\bf (R3a-e)}. 
Consequently, for $q\ge 0$, the groups $C_i(X,\mathcal{G}_{q})$ along with the differentials defined in Definition \ref{def_differential} form a Kato Complex
\[
C(X,\mathcal{G}_q): 0\rightarrow\bigoplus_{x\in X_{(d)}} H^1_{\text{\'et}}(k(x),\mathcal{G}_{d+q})\xrightarrow{d} \cdots \xrightarrow{d}  \bigoplus_{x\in X_{(0)}}H^1_{\text{\'et}}(k(x),\mathcal{G}_q)\rightarrow 0.
\] 
\end{theorem}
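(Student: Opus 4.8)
The proof naturally splits into two halves. The first half is the verification of the cycle-premodule axioms {\bf (R1a-c)}, {\bf (R2a-c)} and {\bf (R3a-e)} for the family $\{M_i\}_{i \in \mathbb{Z}}$; the second half is the deduction that the groups $C_i(X, \mathcal{G}_q)$ together with the differential $d$ of Definition~\ref{def_differential} form a Kato complex, i.e.\ that $d^2 = 0$. For the first half, I would observe that {\bf (R1a-c)} and {\bf (R2a-c)} have already been checked at the level of the data $\mathbf{(D1)}$--$\mathbf{(D3)}$ (the paragraph following the definition of $\mathbf{(D3)}$ records that these are straightforward and left to the reader), so what remains is {\bf (R3a-e)}. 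But {\bf (R3a,c,d,e)} are exactly the content of Proposition~\ref{prop R3}, and {\bf (R3b)} is Lemma~\ref{corestrictions-residues}; the only subtlety is that Proposition~\ref{prop R3} and Lemma~\ref{corestrictions-residues} were stated using the subgroups $H^1_{\mathrm{tame}}(-/k, \mathcal{G}_\ast)$ and their residue maps, whereas here we want them on all of $M_i = H^1(-, \mathcal{G}_i)$. This is precisely where Conventions~\ref{additional_assumptions_on_G} enters: by Lemma~\ref{lemma tame=all}, under these hypotheses $H^1_{\mathrm{tame},v}(K, \mathcal{G}_{i+1}) = H^1(K, \mathcal{G}_{i+1})$ whenever $\deg(k(v):k(v)^p) \le p^i$, and for a valuation $v$ of geometric type over $k$ with $k$ perfect one has $\deg(k(v):k(v)^p) = p^{e + (\dim - 1)}$ bounded appropriately for the twist in question, so that all the residue maps appearing in $\mathbf{(R3)}$ are in fact defined on the full cohomology groups. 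Hence the axioms transport verbatim.

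For the second half, the standard mechanism of Rost (\cite[Section 3, especially (3.1) and Proposition 3.1]{Rost1996}) applies: given a family $\{M_i\}$ satisfying {\bf (R1a-c)}, {\bf (R2a-c)}, {\bf (R3a-e)}, together with the extra finiteness/properness facts ({\bf FD} and {\bf C} in Rost's terminology) needed to make the complex well defined, the composite $d \circ d$ vanishes. Concretely, for $x \in X_{(i+2)}$ with $Z = \overline{\{x\}}$, one must show that for every $y \in X_{(i)}$ lying in $Z$, the $y$-component of $d(d\alpha)$ is zero; this is a purely local computation on the two-dimensional local ring $\mathcal{O}_{Z,y}$, carried out after normalising, and it reduces to the following fact about the cycle-premodule structure: for a two-dimensional normal local domain with fraction field $F$, a height-one prime $\mathfrak{p}$ and the closed point, the iterated residue $\partial_{\mathfrak{m}/\mathfrak{p}} \circ \partial_{\mathfrak{p}}$ summed over all such $\mathfrak{p}$ vanishes on $M_{i+2}(F)$. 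Rost proves this for genuine cycle premodules using only {\bf (R1)}--{\bf (R3)}; since we have verified exactly those axioms, his argument goes through. I would cite this verbatim rather than reproduce it, perhaps after first checking that the finiteness axioms {\bf FD} (finite support of divisors) and {\bf C} (closedness) hold in our setting — {\bf FD} is immediate because $\alpha \in H^1(k(x), \mathcal{G}_{i+1+q})$ has residue supported on the finitely many codimension-one points of a model, and {\bf C} is exactly the statement that $d^2 = 0$, so this is not circular only if one checks the weaker $2$-term closedness on curves, which again follows from $\mathbf{(R3a)}$ and $\mathbf{(R3b)}$ applied to the normalisation of a curve.

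The step I expect to be the main obstacle is the careful bookkeeping of twists in the passage from Lemma~\ref{lemma tame=all} (which gives tameness $=$ everything only for valuations with $\deg(k(v):k(v)^p)$ small relative to the twist) to the assertion that the residue map $\partial_w : H^1(k(x), \mathcal{G}_{i+1+q}) \to H^1(k(w), \mathcal{G}_{i+q})$ appearing in Definition~\ref{def_differential} is genuinely defined for \emph{every} point $x \in X_{(i+1)}$ and every valuation $w$ of $k(x)$ centred at a point of dimension $i$. Here one needs: $k$ perfect of $p$-degree $p^e$, $k(x)$ has transcendence degree $i+1$ over $k$ so $\deg(k(x):k(x)^p) = p^{e+i+1}$, and the valuation $w$ has residue field $k(w)$ of transcendence degree $i$, so $\deg(k(w):k(w)^p) = p^{e+i}$; thus Lemma~\ref{lemma tame=all} applies with the index ``$i$'' there replaced by $e+i+q$ — provided the twist index $i+1+q$ matches $(e+i+q)+1$, which holds precisely when $q \ge 0$ and one has normalised the twists so that Conventions~\ref{additional_assumptions_on_G} is stated for the right weight. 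Getting these indices to line up is where the hypothesis $q \ge 0$ in the statement is used, and it is the one place where I would be most careful to avoid an off-by-$e$ error. Once this is settled, everything else is an application of Rost's formalism together with the already-established Proposition~\ref{prop R3} and Lemma~\ref{corestrictions-residues}.
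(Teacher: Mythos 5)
Your proposal is correct and follows essentially the same route as the paper: the axioms are assembled from the already-proved Lemmas \ref{lemma res}, \ref{lemma cores}, \ref{lemma K-action}, Proposition \ref{prop R3} and Lemma \ref{corestrictions-residues}, with Lemma \ref{lemma tame=all} (via Conventions \ref{additional_assumptions_on_G} and $q\ge 0$) promoting the tame residue maps to all of $H^1(-,\mathcal{G}_\ast)$, and the existence of the complex is then deferred to Rost's formalism. Your index bookkeeping is in fact simpler than you fear, since $k$ is assumed perfect in this section (so $e=0$), and your extra care about Rost's axioms \textbf{(FD)} and \textbf{(C)} supplies detail the paper leaves implicit.
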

\begin{proof}
This follows immediately from Lemmas \ref{lemma tame=all}, \ref{lemma res}, \ref{lemma cores}, \ref{lemma K-action} and Proposition \ref{prop R3}.  The existence of the Kato complex is exactly analogous to the existence of the Rost cycle complex.
\end{proof}

\begin{remark}
It can be verified that the functor $M_*(-)$ in Theorem \ref{theorem existence_Kato_complex} above satisfies the cycle module axioms from \cite{Rost1996}. 
\end{remark}

\section{Unramified cohomology}
\label{section unramified}

In this section we define the \textit{unramified cohomology} and the \textit{naive tame unramified cohomology} groups and prove several results about them generalizing those in \cite[Section 4]{Otabe2023}. Let $\mathcal G$ be as in Notation \ref{notation G} and Convention \ref{additional_assumptions_on_G}.

\begin{definition}
For a field $K$, we define the unramified cohomology as \[H^1_\mathrm{{ur}}(K/k, \mathcal{G}) := \bigcap_{v\in Val(K/k)}\Img(H^1(\mathcal{O}_v,\mathcal{G}) \rightarrow H^1(K, \mathcal{G})).\] 
For a normal variety $X$ over $k$, we define the unramified cohomology as \[H^1_\mathrm{{ur}}(X, \mathcal{G}) := \bigcap_{x\in X^{(1)}}\Img(H^1(\mathcal{O}_{X,x},\mathcal{G}) \rightarrow H^1(k(X), \mathcal{G})).\] We define the unramified tame cohomology of $X$ as \[H^1_{\mathrm{tame, ur}}(X/k, \mathcal{G}) := H^1_{\mathrm{ur}}(X, \mathcal{G}) \cap H^1_{\mathrm{tame}}(k(X)/k, \mathcal{G}).\]
\end{definition}

For the unramified cohomology to be functorial on $Sm_k$, we need to make an additional assumption on $\mathcal{G}$. The following definition is taken from \cite[Definition 2.1.4]{Colliot-Thelene}.

\begin{definition}\label{specialization} We say that $\mathcal{G}$ satisfies the \textit{specialization} property for a regular local ring $A$ if $\Ker(H^1(A,\mathcal{G})\rightarrow H^1(k(A),\mathcal{G})) \subset \Ker(H^1(A,\mathcal{G})\rightarrow H^1(k_A,\mathcal{G})),$ where $k(A)$ and $k_A$ are the fraction field and residue field of $A$ respectively.
\end{definition}

The following result is a special case of a theorem of Colliot-Thel\`ene(\cite[Theorem 2.1.10]{Colliot-Thelene}).

\begin{theorem}\label{functoriality_unramified}
Suppose $\mathcal{G}$ satisfies the specialization property for every discrete valuation ring. Then $H^1_{\mathrm{ur}}(-, \mathcal{G})$ defines a contravariant functor on the category of regular integral schemes over $k$.
\end{theorem}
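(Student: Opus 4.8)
The result is a special case of \cite[Theorem 2.1.10]{Colliot-Th\'el\`ene} applied to the presheaf $H^1_{\text{\'et}}(-,\mathcal{G})$ on regular integral $k$-schemes, so the plan is to run the relevant part of Colliot-Th\'el\`ene's argument. Concretely, for a morphism $f\colon Y\to X$ of regular integral $k$-schemes I would construct a natural pullback $f^*\colon H^1_{\mathrm{ur}}(X,\mathcal{G})\to H^1_{\mathrm{ur}}(Y,\mathcal{G})$, compatible with composition and carrying identities to identities. Since $H^1_{\mathrm{ur}}(X,\mathcal{G})$ sits inside $H^1(k(X),\mathcal{G})$, it is enough to define the map on these ambient groups and then check that it preserves the unramified subgroups; I would split into the cases $f$ dominant and $f$ not dominant.

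The first thing to set up is a d\'evissage of the specialization property from discrete valuation rings to arbitrary regular local rings. If $A$ is regular local with fraction field $F$ and $t_1,\dots,t_d$ is a regular system of parameters, then each $A/(t_1,\dots,t_j)$ is again regular local, and localizing it at the height-one prime generated by $\bar t_{j+1}$ produces a discrete valuation ring with fraction field $\mathrm{Frac}(A/(t_1,\dots,t_j))$ and residue field $\mathrm{Frac}(A/(t_1,\dots,t_{j+1}))$. Chasing a class in $\ker(H^1(A,\mathcal{G})\to H^1(F,\mathcal{G}))$ down such a chain and applying the hypothesis of the theorem at each of these discrete valuation rings, one obtains $\ker(H^1(A,\mathcal{G})\to H^1(F,\mathcal{G}))\subseteq\ker(H^1(A,\mathcal{G})\to H^1(\kappa(x'),\mathcal{G}))$ for every $x'\in\Spec A$; in particular the image in $H^1(\kappa(x'),\mathcal{G})$ of a class coming from $H^1(A,\mathcal{G})\to H^1(F,\mathcal{G})$ is independent of the chosen lift. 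The second ingredient is the extension (purity) statement: for $X$ regular integral, a class in $H^1(k(X),\mathcal{G})$ unramified at every codimension-one point already lies in the image of $H^1(\mathcal{O}_{X,x},\mathcal{G})\to H^1(k(X),\mathcal{G})$ for every $x\in X$. This I would take from \cite[Theorem 2.1.10]{Colliot-Th\'el\`ene} (for the $\mathbb{A}^1$-invariant \'etale sheaves with transfers at hand it can also be extracted from the Gersten resolution machinery).

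With these in hand I would define $f^*$ as follows. If $f$ is dominant, put $f^*\xi=\mathrm{res}^{k(X)}_{k(Y)}(\xi)$; to check $f^*\xi\in H^1_{\mathrm{ur}}(Y,\mathcal{G})$ I fix $y\in Y^{(1)}$, set $B=\mathcal{O}_{Y,y}$ and $x=f(y)$, use the extension lemma to write $\xi$ as the image of some $\tilde\xi\in H^1(\mathcal{O}_{X,x},\mathcal{G})$, transport $\tilde\xi$ along the local homomorphism $\mathcal{O}_{X,x}\to B$, and compare the two ways around the resulting commutative square of stalk maps to conclude $f^*\xi\in\Img(H^1(B,\mathcal{G})\to H^1(k(Y),\mathcal{G}))$. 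If $f$ is not dominant, I let $x_0$ be the generic point of the closure of $f(Y)$; since every stalk map of $f$ targets a local ring whose fraction field it lands in, it factors through the corresponding residue field, so the generic point of $Y$ is carried into $\kappa(x_0)$ and there is an induced map $H^1(\mathcal{O}_{X,x_0},\mathcal{G})\to H^1(\kappa(x_0),\mathcal{G})\to H^1(k(Y),\mathcal{G})$. Using the extension lemma I lift $\xi$ to $\tilde\xi\in H^1(\mathcal{O}_{X,x_0},\mathcal{G})$ and declare $f^*\xi$ to be its image in $H^1(k(Y),\mathcal{G})$; the d\'evissage above makes this independent of the lift, and the same square-chase at each $y\in Y^{(1)}$ (now using that $f(y)$ is a specialization of $x_0$, so $\mathcal{O}_{X,f(y)}\to\mathcal{O}_{X,x_0}$ is a localization) shows $f^*\xi\in H^1_{\mathrm{ur}}(Y,\mathcal{G})$. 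Finally $(g\circ f)^*=f^*\circ g^*$ and $\mathrm{id}^*=\mathrm{id}$ follow by chasing lifts through the composite $\mathcal{O}_{X,\cdot}\to\mathcal{O}_{Y,\cdot}\to k(Z)$ of stalk maps and invoking the uniqueness of residue-field images from the d\'evissage.

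The hard part is the extension/purity statement: pushing a class that is only unramified in codimension one up to one defined over a higher-dimensional regular local ring is not formal from the specialization property for discrete valuation rings, and is precisely where the full force of \cite{Colliot-Th\'el\`ene} (or, in the present setting, the Gersten resolution for strictly $\mathbb{A}^1$-invariant sheaves) enters. Everything else reduces to the elementary d\'evissage above and routine diagram chases with stalks, so in the paper this theorem can simply be deduced from \cite[Theorem 2.1.10]{Colliot-Th\'el\`ene}.
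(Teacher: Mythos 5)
Your bottom line agrees with the paper: the paper's entire proof of Theorem \ref{functoriality_unramified} is the sentence preceding it, namely that the statement is a special case of \cite[Theorem 2.1.10]{Colliot-Thelene}, and your d\'evissage of the specialization property from discrete valuation rings to arbitrary regular local rings via a regular system of parameters is correct and is indeed one of the standard ingredients there. The problem is with the way you propose to run the rest of the argument.

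The gap sits exactly where you yourself locate ``the hard part''. The extension statement you rely on --- that a class in $H^1(k(X),\mathcal{G})$ which is unramified at every codimension-one point of a regular integral $X$ already lies in the image of $H^1(\mathcal{O}_{X,x},\mathcal{G})\to H^1(k(X),\mathcal{G})$ for \emph{every} $x\in X$ --- is codimension $\geq 2$ purity, i.e.\ a form of the Gersten conjecture. It is not the content of \cite[Theorem 2.1.10]{Colliot-Thelene} (that theorem \emph{is} the functoriality statement, not a purity statement; purity is treated separately in that paper and under much stronger hypotheses), it does not follow from the specialization property for discrete valuation rings, and for a general $\mathbb{A}^1$-invariant \emph{\'etale} sheaf with transfers in characteristic $p$ it cannot be quietly extracted from ``Gersten resolution machinery'': Bloch--Ogus-type results require torsion prime to $p$, and the Nisnevich-local Gersten resolution for homotopy invariant sheaves with transfers does not transfer for free to \'etale cohomology. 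Note that your construction needs this purity even in the dominant case, since $f(y)$ for $y\in Y^{(1)}$ can have arbitrary codimension in $X$. Colliot-Th\'el\`ene's argument avoids purity altogether: it works with the unramified subgroup cut out by \emph{all} discrete valuation rings of the function field containing $k$, handles the dominant case by observing that a divisorial valuation of $k(Y)$ restricts on $k(X)$ to such a valuation or to the trivial one, and handles the non-dominant case by evaluating the class at $x=f(\eta_Y)$ through a chain of codimension-one specializations, invoking the specialization property only at divisorial steps --- no extension of the class over a higher-dimensional local ring is ever required. If you want to write the proof out rather than cite it, that is the route to take; otherwise, do what the paper does and record the statement as a special case of \cite[Theorem 2.1.10]{Colliot-Thelene}.
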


In \cite[Theorem 4.3]{Totaro}, Totaro shows the existence of an exact sequence\[
0\rightarrow H^1_{\mathrm{ur},v}(K/k, \Omega^{i+1}_{\mathrm{log},K})\rightarrow H^1_{\mathrm{tame},v}(K,\Omega^{i+1}_{\mathrm{log},K}) \xrightarrow{\partial_v} H^1(k(v),\Omega^{i}_{\mathrm{log},k(v)})\rightarrow 0
\] and uses it to show that the pushforward map on the tame cohomology groups induce a pushforward the unramified tame cohomology groups in the case of finite surjective morphisms. We generalize this to a certain class of $\mathbb{A}^1$-invariant sheaves which we call \textit{good} sheaves.

\begin{definition}
Let $\mathcal{G}$ be an $\mathbb{A}^1$-invariant \'etale sheaf with transfers. We have the cup product in Galois cohomology $H^1(K,\mathbb{Z}/pZ)\times H^0(K,\mathcal{G}) \xrightarrow{\cup} H^1(K,\mathcal{G}).$ We have $H^1(K,\mathbb{Z}/pZ)\cong K/\mathcal{P}(K)$, where $\mathcal{P}$ is the Artin-Schreier map. For $a\in K$ and $s \in H^0(K,\mathcal{G})$, we define the symbol $[a,s\}:=[a]\cup s$. We say that $\mathcal{G}$ is \textit{good} if it satisfies the following conditions:
\begin{enumerate}
\item\label{conv1}  $H^1(K, \mathcal{G})$ is generated by symbols of the form $[a,s\}$, where $a\in K$ and $s \in H^0(K,\mathcal{G})$.

\item \label{conv2} $H^1_{\mathrm{tame},v}(K, \mathcal{G})$ is generated by symbols of the form $[a,s\}$, where $a\in \mathcal{O}_v$ and $s \in H^0(K,\mathcal{G})$.

\item\label{conv3} For any discrete valuation ring $R$ with valuation $v$, the restriction map $\mathcal{G}_{-1}(R) \xrightarrow{} \mathcal{G}_{-1}(k(v))$  is surjective. 

\item\label{conv4} The following sequence is exact. \[
0\rightarrow \mathcal{G}(\mathcal{O}_{v}) \rightarrow \mathcal{G}(K) \xrightarrow{\partial_v} \mathcal{G}_{-1}(k(v)) \rightarrow 0.
\]
\end{enumerate}
\end{definition}

The following Lemma enables us to generalize the proof of Totaro to the above defined class of sheaves.

\begin{lemma}
Let $t$ be any uniformizer of $v$ and let $\mathcal{G}$ be a good sheaf. Then $\mathcal{G}(K)$ is generated by \[ \Img(\mathcal{G}_{-1}(\mathcal{O}_{v}) \xrightarrow{\{t\}\cdot} \mathcal{G}(\mathcal{O}_{v}) \rightarrow \mathcal{G}(K)) \] and \[ \Img(\mathcal{G}(\mathcal{O}_{v}) \rightarrow \mathcal{G}(K)).\]
\end{lemma}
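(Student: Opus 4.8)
The plan is to measure, via the residue map $\partial_v$ of the cycle module $H^0(-,\mathcal{G}_\ast)$ --- which is a cycle module by the results of \cite{Deglise}, as already used in the proof of Proposition \ref{prop R3} --- the obstruction to a section over $K$ extending over $\mathcal{O}_v$, and then to cancel this obstruction by a suitable multiple of $\{t\}$. Concretely, fix $\alpha \in \mathcal{G}(K) = H^0(K,\mathcal{G})$ and set $c := \partial_v(\alpha) \in \mathcal{G}_{-1}(k(v))$. By condition \ref{conv3} in the definition of a good sheaf, the restriction map $\mathcal{G}_{-1}(\mathcal{O}_v) \to \mathcal{G}_{-1}(k(v))$ is surjective, so we may choose $\beta \in \mathcal{G}_{-1}(\mathcal{O}_v)$ whose image in $\mathcal{G}_{-1}(k(v))$ is $c$; write $\beta_K \in \mathcal{G}_{-1}(K)$ for the image of $\beta$ under $\mathcal{G}_{-1}(\mathcal{O}_v) \to \mathcal{G}_{-1}(K)$. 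Then $\{t\}\cdot\beta_K \in \mathcal{G}(K)$ lies in the first of the two displayed images (it is $\{t\}$ times the class of $\beta$), and the main point will be that $\{t\}\cdot\beta_K$ has the same residue as $\alpha$.

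The residue computation runs as follows. Since $\beta$ is a section over $\mathcal{O}_v$, we have $\partial_v(\beta_K) = 0$; hence, by the cycle-module rules \textbf{(R3)} of \cite{Rost1996} applied to $H^0(-,\mathcal{G}_\ast)$, $\partial_v(\{u\}\cdot\beta_K) = -\{\overline{u}\}\cdot\partial_v(\beta_K) = 0$ for every unit $u \in \mathcal{O}_v^\times$. Consequently $\partial_v(\{t\}\cdot\beta_K)$ is independent of the choice of uniformizer $t$ and equals the specialization $s_v(\beta_K)$; and since $\beta_K$ comes from $\mathcal{O}_v$, its specialization agrees with the sheaf-theoretic reduction $\mathcal{G}_{-1}(\mathcal{O}_v) \to \mathcal{G}_{-1}(k(v))$ of $\beta$, which is $c$ by our choice of $\beta$. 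Thus $\partial_v(\alpha - \{t\}\cdot\beta_K) = c - c = 0$, and by the exact sequence in condition \ref{conv4} the difference $\alpha - \{t\}\cdot\beta_K$ lies in $\Img\bigl(\mathcal{G}(\mathcal{O}_v) \to \mathcal{G}(K)\bigr)$. The decomposition $\alpha = \{t\}\cdot\beta_K + (\alpha - \{t\}\cdot\beta_K)$ then exhibits $\alpha$ as a sum of an element of the first displayed image and one of the second, which proves the lemma.

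The only delicate point is the residue computation $\partial_v(\{t\}\cdot\beta_K) = c$ --- in particular, ruling out a stray sign or unit-correction term, which is exactly what the vanishing $\partial_v(\{u\}\cdot\beta_K) = 0$ for units $u$ (applied to $u = -1$ to compare $\{t\}$ with $\{-t\}$) takes care of. Everything else is formal; note that only conditions \ref{conv3} and \ref{conv4} in the definition of a good sheaf enter, while the symbol-generation conditions \ref{conv1} and \ref{conv2} play no role here.
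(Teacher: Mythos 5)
Your argument is correct and is essentially the paper's own proof: both lift the residue $\partial_v(\alpha)$ through the surjection $\mathcal{G}_{-1}(\mathcal{O}_v)\twoheadrightarrow\mathcal{G}_{-1}(k(v))$ of condition \ref{conv3}, use that $\partial_v(\{t\}\cdot(-))$ agrees with reduction on elements coming from $\mathcal{O}_v$, and conclude via the exact sequence of condition \ref{conv4}. The only cosmetic difference is that the paper packages the lift as a set-theoretic section $f$ with $\partial_v\circ f=\mathrm{id}$ and simply asserts the key compatibility of $\partial_v(\{t\}\cdot(-))$ with restriction, which you instead justify from the \textbf{(R3)} rules.
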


\begin{proof}
Let $s$ be a set theoretic section of the surjective restriction map $\mathcal{G}_{-1}(\mathcal{O}_v) \xrightarrow{} \mathcal{G}_{-1}(k(v))$. Let $f$ be the composite map $\mathcal{G}_{-1}(k(v)) \xrightarrow{s} \mathcal{G}_{-1}(\mathcal{O}_v) \xrightarrow{\{t\}\cdot} \mathcal{G}(\mathcal{O}_v) \rightarrow \mathcal{G}(K)$, where the last map is the restriction map.

Since the composite $\mathcal{G}_{-1}(\mathcal{O}_v) \xrightarrow{\{t\}\cdot} \mathcal{G}(\mathcal{O}_v) \rightarrow \mathcal{G}(K) \xrightarrow{\partial_v} \mathcal{G}_{-1}(k(v))$ coincides with the restriction map $\mathcal{G}_{-1}(\mathcal{O}_v) \rightarrow \mathcal{G}_{-1}(k(v))$, we have $\partial_v \circ f = id$. Now, let $\alpha \in \mathcal{G}(K)$; then \[\partial_v(\alpha - f(\partial_v(\alpha))) = \partial_v(\alpha) - \partial_v(f(\partial_v(\alpha))) = \partial_v(\alpha) - \partial_v(\alpha) =0.\] Therefore, $\alpha - f(\partial_v(\alpha)) \in \Img(\mathcal{G}(\mathcal{O}_{v}) \rightarrow \mathcal{G}(K))$. Also, by definition, \[f(\partial_v(\alpha))) \in \Img(\mathcal{G}_{-1}(\mathcal{O}_{v}) \xrightarrow{\{t\}\cdot} \mathcal{G}(\mathcal{O}_{v}) \rightarrow \mathcal{G}(K)).\] Hence, we are done.
\end{proof}

With these assumptions, we can generalize the exact sequence of \cite[Theorem 4.3]{Totaro} to $\mathcal{G}$.

\begin{proposition}\label{exact_sequence}
Let $\mathcal{G}$ be a good sheaf and let $K$ be a discrete valuation ring with valuation $v$. We have an exact sequence \[
0\rightarrow H^1_{\mathrm{ur},v}(K/k, \mathcal{G})\rightarrow H^1_{\mathrm{tame},v}(K,\mathcal{G}) \xrightarrow{\partial_v} H^1(k(v),\mathcal{G}_{-1})\rightarrow 0.
\]
\end{proposition}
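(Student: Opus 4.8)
The plan is to adapt Totaro's proof of \cite[Theorem 4.3]{Totaro}, with the four conditions in the definition of a good sheaf and the preceding Lemma playing the roles that the de Rham--Witt formalism plays in \emph{loc.\ cit.} Throughout write $\mathcal{O}_v$ for the valuation ring of $v$, $t$ for a uniformizer, and $\partial^{H^0}_v$ for the residue map on the cycle module $H^0(-,\mathcal{G}_\ast)$ of \cite{Deglise}. I would first pass to the henselization of $K$ at $v$: this does not affect the hypotheses (conditions (1)--(4) pass to the relevant filtered colimits) nor the conclusion (the tame subgroup, the residue map, and---by the specialization property and Theorem \ref{functoriality_unramified}---the unramified subgroup are all compatible with $K \to K^h$). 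After this reduction, Lemma \ref{equivalenceoftameness} identifies $H^1_{\mathrm{tame},v}(K,\mathcal{G})$ with $H^1(k(v),H^0(K^{sh},\mathcal{G}))$ in such a way that $\partial_v = H^1(k(v),\partial^{H^0}_v)$ (Remark \ref{remark cocycles}), and one has the symbol version of \eqref{eqn Totaro residue}, namely $\partial_v([a,s\}) = [\,\overline{a},\partial^{H^0}_v(s)\}$ for $a \in \mathcal{O}_v$, $s \in H^0(K,\mathcal{G})$, proved exactly as \eqref{eqn Totaro residue} once one knows such $[a,s\}$ is tame (condition (2)).

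For surjectivity of $\partial_v$, write a class in $H^1(k(v),\mathcal{G}_{-1})$ as a sum of symbols $[\,\overline{a},\overline{s}\}$ with $\overline{a}\in k(v)$, $\overline{s}\in H^0(k(v),\mathcal{G}_{-1})$; this uses generation by symbols, i.e.\ condition (1) applied to the twist $\mathcal{G}_{-1}$ (which, for the de Rham--Witt sheaves, holds for all twists at once). Lift $\overline{a}$ to $a \in \mathcal{O}_v$ and, by condition (3), lift $\overline{s}$ to $s \in \mathcal{G}_{-1}(\mathcal{O}_v)$. Then $[a,\{t\}\cdot s\} = \{t\}\cdot[a,s\}$ lies in $H^1_{\mathrm{tame},v}(K,\mathcal{G})$ by condition (2), and by the symbol residue formula its residue is $[\,\overline{a},\partial^{H^0}_v(\{t\}\cdot s)\} = [\,\overline{a},\overline{s}\}$, the last equality being precisely the computation in the proof of the preceding Lemma (the composite $\mathcal{G}_{-1}(\mathcal{O}_v)\xrightarrow{\{t\}\cdot}\mathcal{G}(\mathcal{O}_v)\to\mathcal{G}(K)\xrightarrow{\partial^{H^0}_v}\mathcal{G}_{-1}(k(v))$ is the restriction). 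Summing gives the desired preimage.

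For exactness at $H^1_{\mathrm{tame},v}(K,\mathcal{G})$, the inclusion $H^1_{\mathrm{ur},v}(K/k,\mathcal{G}) \subseteq \ker \partial_v$ is the easy half: an unramified class corresponds, under the Hochschild--Serre identification, to a class whose cocycle takes values in $H^0(\mathcal{O}_{K^{sh}},\mathcal{G}) = \Ker\partial^{H^0}_v$ (by condition (4)), hence dies under $H^1(k(v),\partial^{H^0}_v)$. For the reverse inclusion, take $\alpha \in H^1_{\mathrm{tame},v}(K,\mathcal{G})$ with $\partial_v\alpha = 0$. By condition (2), $\alpha = \sum_j [a_j,s_j\}$ with $a_j \in \mathcal{O}_v$; applying the preceding Lemma to each $s_j$ and using the \emph{exactness} of the sequence in condition (4) to absorb $\Ker(\mathcal{G}_{-1}(\mathcal{O}_v) \to \mathcal{G}_{-1}(k(v)))$ into $\Img(\mathcal{G}(\mathcal{O}_v) \to \mathcal{G}(K))$, one puts $\alpha$ in the normal form $\alpha = \{t\}\cdot\beta + \delta$ with $\delta \in H^1_{\mathrm{ur},v}(K/k,\mathcal{G})$ and $\beta \in H^1(K,\mathcal{G}_{-1})$ unramified. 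Since $\partial_v\beta = 0$ one has $\partial_v(\{t\}\cdot\beta) = s^t_v(\beta)$, so $0 = \partial_v\alpha = s^t_v(\beta) = \overline{\beta}$: the reduction of $\beta$ to $k(v)$ vanishes, and it remains to conclude that $\{t\}\cdot\beta$ is itself unramified.

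\textbf{This last step is where I expect the real difficulty to lie.} For $\mathcal{G} = \Omega^i_{\log}$, Totaro exploits the presentation of $H^1(-,\Omega^i_{\log})$ coming from the Artin--Schreier sequence to lift the relation witnessing $\overline{\beta} = 0$ to the valuation ring modulo $(\mathcal{P}+d)$-exact forms, and so rewrites $\{t\}\cdot\beta$ as an unramified class; for a general good sheaf there is no such presentation, so the argument must be extracted from the axioms---most plausibly by combining the normal form with condition (4) for the twist $\mathcal{G}_{-1}$ and the specialization property for $\mathcal{G}_{-1}$, so that $\overline{\beta} = 0$ forces $\beta$ into the image of $\mathcal{G}_{-1}(\mathcal{O}_v)$ in a way compatible with multiplication by $\{t\}$. (Implicit in the easy half, and needed again here, is the vanishing $H^1_{\text{\'et}}(\mathcal{O}_{K^{sh}},\mathcal{G}) = 0$ for strictly henselian valuation rings, which holds for the de Rham--Witt sheaves by the Illusie exact sequence and Artin--Schreier surjectivity.) Once this is established, the three exactness assertions follow; left-exactness $H^1_{\mathrm{ur},v} \hookrightarrow H^1_{\mathrm{tame},v}$ is then automatic, as $H^1_{\mathrm{ur},v}$ is by construction a subgroup of $H^1(K,\mathcal{G})$ contained in the tame subgroup.
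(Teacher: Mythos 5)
Your treatment of surjectivity of $\partial_v$ and of the inclusion $H^1_{\mathrm{ur},v}\subseteq\ker\partial_v$ matches the paper's, but the step you flag as ``the real difficulty'' is genuinely left unproved in your proposal, and the paper closes it not by the route you sketch (lifting a relation witnessing $\overline{\beta}=0$ into the valuation ring, which, as you say, has no analogue for a general good sheaf) but by a formal device for which you already hold every ingredient. The paper defines a map
\[
\varphi\colon H^1(k(v),\mathcal{G}_{-1})\longrightarrow H^1_{\mathrm{tame},v}(K,\mathcal{G})/H^1_{\mathrm{ur},v}(K,\mathcal{G}),\qquad [\overline{a},s'\}\longmapsto [a,\{t\}\cdot s\},
\]
where $a$ lifts $\overline a$ and $s\in\mathcal{G}_{-1}(\mathcal{O}_v)$ lifts $s'$, a lift supplied by condition (3). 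Independence of the choice of $s$ is exactly where condition (4) enters: two lifts give $\{t\}\cdot s_1-\{t\}\cdot s_2\in\Ker\partial_v=\Img(\mathcal{G}(\mathcal{O}_v)\to\mathcal{G}(K))$, so the corresponding symbols differ by an unramified class. Your symbol residue formula is the identity $\overline{\partial_v}\circ\varphi=\mathrm{id}$ on the quotient, and your ``normal form'' $\alpha\equiv\{t\}\cdot\beta\pmod{H^1_{\mathrm{ur},v}}$ is precisely the surjectivity of $\varphi$. A surjective map admitting a left inverse is a bijection whose inverse is that left inverse; hence $\overline{\partial_v}$ is injective on $H^1_{\mathrm{tame},v}(K,\mathcal{G})/H^1_{\mathrm{ur},v}(K,\mathcal{G})$, and that injectivity is exactly the missing inclusion $\ker\partial_v\subseteq H^1_{\mathrm{ur},v}$. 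No presentation of $H^1(-,\mathcal{G})$ beyond the generation by symbols you already invoked is required, and the Artin--Schreier-specific manipulation you were trying to axiomatize never appears.

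One caveat is worth recording. For the formal argument to be airtight, $\varphi$ must be well defined as a homomorphism on all of $H^1(k(v),\mathcal{G}_{-1})$, i.e.\ it must kill every relation among symbols, not merely be independent of the chosen lifts of a single symbol; the paper's proof explicitly verifies only the latter. So your instinct that the delicate point lies in passing from $\overline{\beta}=0$ to ``$\{t\}\cdot\beta$ is unramified'' is sound --- the section argument relocates that delicacy into the well-definedness of $\varphi$ rather than making it vanish --- but that is where the paper places the burden, and it is the (much more tractable) statement you should supply if you adopt this route.
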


\begin{proof}
Let $t$ be a uniformizer of $v$.

Let $[a,s\}\in H^1_{\mathrm{ur},v}(K, \mathcal{G})$. Then $s\in H^1_{\mathrm{ur},v}(K,G)$, which implies that $\partial_v(s)=0$. We have $\partial_v([a,s\})=[\overline{a},\partial_v(s)\} = [\overline{a}, 0\}$.

Now we define a map \[\varphi:H^1(k(v),\mathcal{G}_{-1}) \rightarrow H^1_{\mathrm{tame},v}(K,\mathcal{G})/H^1_{\mathrm{ur},v}(K, \mathcal{G})\] by sending $[\overline{a},s'\}$ to $[a,\{t\}\cdot s\}$, where $a$ is a lift for $\overline{a}$ and $s\in H^0(K,\mathcal{G})$ is such that $\partial_v(\{t\} \cdot s)=s'$ and $s\in H^1_{\mathrm{ur},v}(K,\mathcal{G})$. Now we show that $\varphi$ is well defined. Let $s_1, s_2 \in H^1_{\mathrm{ur},v}(K, \mathcal{G})$ be such that $\partial_v(\{t\} \cdot s_1) = \partial_v(\{t\} \cdot s_2)$ and let $a_1, a_2 \in \mathcal{O}_{v}$ be lifts of $\overline{a}$.  Then we have $\partial_v(\{t\}\cdot s_1 - \{t\}\cdot s_2) = 0$ so that $\{t\}\cdot s_1 - \{t\}\cdot s_2$ comes from $\mathcal{G}(\mathcal{O}_{v})$ by assumption \ref{conv4}. Therefore, $[a,\{t\}\cdot s_1\}-[a,\{t\}\cdot s_2\} \in H^1_{\mathrm{ur}}(K, \mathcal{G})$, which shows that $\varphi([\overline{a}, s'\})$ is independent of the choice of $s$ and $a$.

Clearly, we have $\partial_v\circ\varphi=id$.

Note that showing that the sequence is exact is the same as showing that $\varphi$ is an isomorphism. Since $\varphi$ is a section, it remains for us to show that it is surjective. Let $[a,s\}\in H^1_{\mathrm{tame},v}(K,\mathcal{G})$ be such that $a\in \mathcal{O}_v$ and $s\in H^0(K,\mathcal{G})$. By our assumption \ref{conv2}, $H^1_{\mathrm{tame},v}(K,\mathcal{G})$ is generated by such elements. By our assumption \ref{conv3}, $s$ is the linear combination of elements of the form $\{t\}\cdot s'$ and $s''$, where $s'$ and $s''$ come from $\mathcal{G}(\mathcal{O}_{v})$ and $\mathcal{G}_{-1}(\mathcal{O}_{v})$ respectively.

Therefore, $[a, s\}$ is the linear combination of elements of the form $[a, \{t\}\cdot s'\}$ and $[a,s''\}$. The former lie in the image of $\varphi$, while the latter lie in $H^1_{\mathrm{ur},v}(K, \mathcal{G})$. Therefore, $\varphi$ is surjective.

\end{proof}

Using this result, we can show that that the tame unramified cohomology groups admit pushforwards under finite surjective morphisms.

\begin{lemma}
Let $f:Y\rightarrow X$ be a finite surjective morphism of normal varieties over $k$. Then the co-restriction map $H^1_{\mathrm{tame}}(k(Y)/k, \mathcal{G}) \rightarrow H^1_{\mathrm{tame}}(k(X)/k, \mathcal{G})$ induces a map \[f_*: H^1_{\mathrm{tame, ur}}(Y/k, \mathcal{G})\rightarrow H^1_{\mathrm{tame, ur}}(X/k, \mathcal{G}).\]
\end{lemma}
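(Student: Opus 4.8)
The plan is to reduce the statement to the behaviour of residues, using the exact sequence of Proposition~\ref{exact_sequence} as a criterion for a tame class to be unramified, and the compatibility $\mathbf{R3b}$ (Lemma~\ref{corestrictions-residues}) between residue and corestriction maps. Write $\phi\colon k(X)\to k(Y)$ for the finite field extension induced by $f$ and let $\phi^*\colon H^1(k(Y),\mathcal{G})\to H^1(k(X),\mathcal{G})$ be the corestriction. By Lemma~\ref{lemma cores}, $\phi^*$ already carries $H^1_{\mathrm{tame}}(k(Y)/k,\mathcal{G})$ into $H^1_{\mathrm{tame}}(k(X)/k,\mathcal{G})$, so $f_*:=\phi^*$ is defined on the tame parts and the tameness of $f_*\alpha$ is automatic. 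The only thing to verify is that $\phi^*$ sends an unramified (and tame) class on $Y$ to an unramified class on $X$.

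So fix $\alpha\in H^1_{\mathrm{tame,ur}}(Y/k,\mathcal{G})$ and a point $x\in X^{(1)}$, and let $v=v_x$ be the corresponding discrete valuation on $k(X)$; since $X$ is a normal variety, $\mathcal{O}_{X,x}$ is a DVR and $v\in Val(k(X)/k)$. As $f$ is finite and surjective and $Y$ is normal, the points $y\in Y$ over $x$ are exactly the $y\in Y^{(1)}$ with $f(y)=x$, each $\mathcal{O}_{Y,y}$ is a DVR, and the valuations $v_y$ are precisely the extensions of $v$ to $k(Y)$ — they correspond to the maximal ideals of the integral closure of $\mathcal{O}_{X,x}$ in $k(Y)$, which is the semilocal ring $\bigcap_{y\mapsto x}\mathcal{O}_{Y,y}$ — with finite residue extensions $\phi_y\colon k(v)\to k(v_y)$. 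Applying $\mathbf{R3b}$ to $\phi$ and the valuation $v$ gives
\[
\partial_v(\phi^*\alpha)=\sum_{y\mapsto x}\phi_y^*\bigl(\partial_{v_y}(\alpha)\bigr).
\]
Since $\alpha$ is unramified on $Y$, for each such $y$ we have $\alpha\in\Img\bigl(H^1(\mathcal{O}_{Y,y},\mathcal{G})\to H^1(k(Y),\mathcal{G})\bigr)=H^1_{\mathrm{ur},v_y}(k(Y)/k,\mathcal{G})$, and by Proposition~\ref{exact_sequence} this image is exactly $\Ker(\partial_{v_y})$; hence $\partial_{v_y}(\alpha)=0$ for all $y$ and therefore $\partial_v(\phi^*\alpha)=0$. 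Applying Proposition~\ref{exact_sequence} now to $v$ on $k(X)$, we conclude $\phi^*\alpha\in\Ker(\partial_v)=H^1_{\mathrm{ur},v}(k(X)/k,\mathcal{G})=\Img\bigl(H^1(\mathcal{O}_{X,x},\mathcal{G})\to H^1(k(X),\mathcal{G})\bigr)$. As $x\in X^{(1)}$ was arbitrary, $\phi^*\alpha\in H^1_{\mathrm{ur}}(X,\mathcal{G})$, and together with the tameness noted above this puts $\phi^*\alpha$ in $H^1_{\mathrm{tame,ur}}(X/k,\mathcal{G})$, so $f_*=\phi^*$ restricts to the asserted map.

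The step that needs genuine care is the identification in the second paragraph: that the extensions of $v_x$ to $k(Y)$ are exactly the valuations of the codimension-one points of $Y$ over $x$, with finite residue extensions. This relies on finiteness and surjectivity of $f$ (fibres are finite and relative codimension is preserved) together with normality of both $X$ and $Y$ (so the relevant local rings are DVRs and the integral closure of $\mathcal{O}_{X,x}$ in $k(Y)$ is their intersection); everything else is formal once Proposition~\ref{exact_sequence} is available, which is precisely where the ``good sheaf'' hypothesis is used.
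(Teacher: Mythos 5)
Your proof is correct and is essentially the paper's argument: the paper also combines Proposition~\ref{exact_sequence} (the unramified tame classes are the kernel of the residues) with Lemma~\ref{corestrictions-residues} ($\mathbf{R3b}$), packaging the pointwise computation you carry out into a single commutative diagram with exact rows. Your extra care in identifying the extensions of $v_x$ with the codimension-one points of $Y$ over $x$, and in noting that tameness of $\phi^*\alpha$ comes from Lemma~\ref{lemma cores}, is implicit in the paper's diagram.
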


\begin{proof}
We need to show that the co-restriction maps defined on the function fields sends an element of $H^1_{\mathrm{tame, ur}}(Y/k, \mathcal{G})$ to an element of $H^1_{\mathrm{tame, ur}}(k(X)/k, \mathcal{G}).$ By Proposition \ref{exact_sequence} and Lemma \ref{corestrictions-residues}, we have a commutative diagram with exact rows as follows.
\[\begin{tikzcd}
	0 & {H^1_{\mathrm{tame,ur}}(Y,\mathcal{G})} & {H^1_{\mathrm{tame}}(k(Y)/k,\mathcal{G})} & {\bigoplus_{y\in Y^{(1)}}H^1(k(y),\mathcal{G}_{-1})} \\
	0 & {H^1_{\mathrm{tame,ur}}(X,\mathcal{G})} & {H^1_{\mathrm{tame}}(k(Y)/k,\mathcal{G})} & {\bigoplus_{x\in X^{(1)}}H^1(k(x),\mathcal{G}_{-1})}
	\arrow[from=1-1, to=1-2]
	\arrow[from=1-2, to=1-3]
	\arrow[from=1-3, to=1-4]
	\arrow[from=1-3, to=2-3]
	\arrow[from=1-4, to=2-4]
	\arrow[from=2-1, to=2-2]
	\arrow[from=2-2, to=2-3]
	\arrow[from=2-3, to=2-4]
\end{tikzcd}\]
From the diagram, it is clear that any element of of $H^1_{\mathrm{tame, ur}}(Y/k, \mathcal{G})$ maps to an element of $H^1_{\mathrm{tame, ur}}(k(X)/k, \mathcal{G}).$
\end{proof}

\section{Unramified curve-tame cohomology}
\label{section unramified ct}

The unramified curve-tame cohomology groups were defined by Otabe in \cite{Otabe2023} for the sheaf of logarithmic K\"ahler differentials. We extend this notion to a more general class of $\mathbb{A}^1$-invariant sheaves and prove results analogous to those in \cite{Otabe2023}. All sheaves $\mathcal{G}$ considered in this section will be $\mathbb{A}^1$-invariant \'etale sheaves with transfers over $k$ satisfying the specialization property of Definition \ref{specialization} for every discrete valuation ring over $k$.

\begin{conventions} We use the following conventions for the rest of the article.\label{conventions_homology_section}
\begin{enumerate}
\item All fields considered will be finitely generated over a field $k$ of characteristic $p>0$.

\item Let $\mathcal G$ be as in Notation \ref{notation G} and Convention \ref{additional_assumptions_on_G}.

\item For a field extension $\varphi:K \rightarrow L$, we denote by $r_{L/K}$ the restriction map $\varphi_*: H^1(K, \mathcal{G}) \rightarrow H^1(L, \mathcal{G})$. If $\varphi$ is finite, we denote by $c_{L/K}$ the co-restriction map $\varphi^*: H^1(L, \mathcal{G}) \rightarrow H^1(K, \mathcal{G})$. Abusing notation, we shall use the same notation for the restrictions of these maps to the tame subgroup and the unramified subgroup.

\item $\mathcal G$ satisfies the specialization property of Definition \ref{specialization}. This implies that the unramified cohomology is a contravariant functor by Theorem \ref{functoriality_unramified}. 

\end{enumerate}
\end{conventions}

\begin{definition}
We define the unramified curve-tame cohomology group $H^1_{\mathrm{ct, ur}}(X, \mathcal{G})$ to be the subgroup of $H^1_{\mathrm{ur}}(X, \mathcal{G})$ consisting of elements $\alpha$ such that for any field $K$ and any morphism $C\rightarrow X$ from any normal $K$-curve $C$, the pullback $\alpha|C$ lies in $H^1_{\mathrm{tame, ur}}(C/K, \mathcal{G})$. 
\end{definition}

\begin{lemma}\label{functoriality_curve_tame}
Let $X\in Sm_k$ and let $Y$ be a smooth scheme over a field $K$ and let $f:Y\rightarrow X$ be a $k$-morphism. Then the restriction map $f^*:H^1_{\mathrm{ur}}(X,\mathcal{G}) \rightarrow H^1_{\mathrm{ur}}(Y.\mathcal{G})$ induces a map $H^1_{\mathrm{ct, ur}}(X/k, \mathcal{G}) \rightarrow H^1_{\mathrm{ct, ur}}(Y/K, \mathcal{G})$.
Moreover, if $K \rightarrow L$ is a field extension and $Z$ is a smooth scheme over $Y$ and $Z \rightarrow Y$ is a $K$-morphism, the restriction maps satisfy the obvious functoriality properties.

\end{lemma}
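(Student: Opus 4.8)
The statement asserts two things: first, that the restriction map $f^*$ on unramified cohomology sends the curve-tame subgroup $H^1_{\mathrm{ct, ur}}(X/k, \mathcal{G})$ into $H^1_{\mathrm{ct, ur}}(Y/K, \mathcal{G})$, and second, that these induced maps satisfy the expected functoriality for composites over a tower of base-field extensions. The first ingredient I would invoke is the functoriality of the unramified cohomology: since $\mathcal{G}$ satisfies the specialization property for every discrete valuation ring (Conventions \ref{conventions_homology_section}), Theorem \ref{functoriality_unramified} gives that $f^*: H^1_{\mathrm{ur}}(X, \mathcal{G}) \to H^1_{\mathrm{ur}}(Y, \mathcal{G})$ is well-defined, so there is nothing to check at the level of unramified cohomology; the content is entirely about the curve-tame condition being preserved.

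First I would unwind the definition of $H^1_{\mathrm{ct, ur}}$: an element $\alpha \in H^1_{\mathrm{ur}}(X, \mathcal{G})$ is curve-tame if for \emph{every} field $F$ and every morphism $g: C \to X$ from a normal $F$-curve $C$, the pullback $g^*\alpha = \alpha|C$ lies in $H^1_{\mathrm{tame, ur}}(C/F, \mathcal{G})$. Now take $\alpha \in H^1_{\mathrm{ct, ur}}(X/k, \mathcal{G})$ and let $h: C \to Y$ be a morphism from a normal $F$-curve $C$, where $F$ is some finitely generated field over $K$ (hence over $k$). I want to show $(f^*\alpha)|C \in H^1_{\mathrm{tame, ur}}(C/F, \mathcal{G})$. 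But $(f^*\alpha)|C = h^*(f^*\alpha) = (f\circ h)^*\alpha = \alpha|C$ computed via the composite $k$-morphism $C \xrightarrow{h} Y \xrightarrow{f} X$. Since $C$ is a normal $F$-curve mapping to $X$ and $\alpha$ is curve-tame on $X/k$, the defining property of $H^1_{\mathrm{ct, ur}}(X/k, \mathcal{G})$ applies directly with the test curve $(C, f\circ h)$ and gives $\alpha|C \in H^1_{\mathrm{tame, ur}}(C/F, \mathcal{G})$, which is exactly what is needed. The one point requiring a line of justification is that $(f^*\alpha)|C$ and $\alpha|C$ (via the composite) genuinely agree; this is the transitivity of restriction maps for the unramified cohomology functor of Theorem \ref{functoriality_unramified}, i.e. $(f\circ h)^* = h^* \circ f^*$, which is part of the assertion that $H^1_{\mathrm{ur}}(-, \mathcal{G})$ is a functor. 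For the moreover clause, given a further field extension $K \to L$ and a $K$-morphism $Z \to Y$ with $Z$ smooth over $Y$, the same functoriality $(Z \to Y \to X)^* = (Z\to Y)^* \circ (Y \to X)^*$ gives the compatibility of the induced maps on curve-tame groups; one just notes that a normal $F$-curve over $Z$ (for $F$ finitely generated over $L$) is a fortiori a normal $F$-curve mapping to $Y$ and to $X$, so the three curve-tame conditions are compatible and the induced maps compose correctly.

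The proof is essentially formal once the functoriality of $H^1_{\mathrm{ur}}$ is in hand, so there is no serious obstacle; the only mild subtlety is bookkeeping with the base fields. I would be slightly careful that the test curves in the definition of curve-tame cohomology range over \emph{all} fields $F$ (finitely generated over $k$), not just over $K$, so that a curve over $Y$ defined over a field $F \supseteq K$ is indeed an admissible test object for the curve-tame condition on $X/k$ — this is automatic since $F$ is then also finitely generated over $k$. I expect this to be the only place one might momentarily worry, and it is resolved immediately by the hypothesis that all fields considered are finitely generated over $k$.
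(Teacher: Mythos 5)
Your proposal is correct and follows essentially the same route as the paper's own proof: pull the test curve $C \to Y$ back along $f$ to get a test curve $C \to X$, apply the defining property of $H^1_{\mathrm{ct, ur}}(X/k,\mathcal{G})$, and use transitivity of restriction from Theorem \ref{functoriality_unramified} to identify $(f^*\alpha)|C$ with $\alpha$ restricted along the composite. The bookkeeping point you flag about base fields is handled identically in the paper.
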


\begin{proof}
Suppose $\alpha\in H^1_{\mathrm{ct, ur}}(X/k, \mathcal{G})$.  Let $L$ be a finitely generated extension of $K$ and let $C\rightarrow Y$ be a $K$-morphism from a normal $L$-curve $C$. Then $(f^*\alpha)|C$ is the restriction of $\alpha$ along the composite morphism $C \rightarrow Y \rightarrow X$. Since $\alpha \in H^1_{\mathrm{ct, ur}}(X/k,\mathcal{G}),$ this lies in $H^1_{\mathrm{ct, ur}}(C/L, \mathcal{G})$. Therefore, $f^*\alpha \in H^1_{\mathrm{ct, ur}}(Y/K, \mathcal{G})$. The functoriality follows from the corresponding result for $H^1_{\mathrm{ur}}(-,\mathcal{G})$.
\end{proof}

We note down an extension of \cite[Proposition 4.12, (1),(2)]{Otabe2023} below.

\begin{proposition}
Let $C$ be a normal $k$-curve, then $H^1_{\mathrm{tame, ur}}(C/k, \mathcal{G}) = H^1_{\mathrm{ct, ur}}(C/k, \mathcal{G})$.
\end{proposition}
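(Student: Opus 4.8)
The plan is to establish both inclusions. The inclusion $H^1_{\mathrm{ct, ur}}(C/k, \mathcal{G}) \subseteq H^1_{\mathrm{tame, ur}}(C/k, \mathcal{G})$ is immediate from the definitions: taking the identity morphism $C \to C$ (viewing $C$ as a $k$-curve) in the defining condition for $H^1_{\mathrm{ct, ur}}$ forces any element to lie in $H^1_{\mathrm{tame, ur}}(C/k, \mathcal{G})$. So the content is the reverse inclusion: given $\alpha \in H^1_{\mathrm{tame, ur}}(C/k, \mathcal{G})$ and an arbitrary morphism $\pi \colon D \to C$ from a normal $K$-curve $D$ (for $K$ any finitely generated field over $k$), we must show $\pi^*\alpha \in H^1_{\mathrm{tame, ur}}(D/K, \mathcal{G})$.

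First I would reduce to the case where $\pi$ is dominant: if $\pi$ is constant, factoring through a closed point $\mathrm{Spec}\, E \hookrightarrow C$, then $\pi^*\alpha$ is pulled back from $H^1(E, \mathcal{G}) \subseteq H^1_{\mathrm{ur}}$ via $D \to \mathrm{Spec}\, E$, and since $E/k$ is a finitely generated field, one checks the image lies in the tame unramified cohomology (unramified-ness is clear as $\mathcal{G}$ satisfies the specialization property, and tameness of a pullback from a field follows because the valuations on $K(D)$ restrict trivially to $E$, so by $\mathbf{R3c}$/$\mathbf{R3d}$-type arguments the relevant residues vanish — this is where I would invoke the structure from Proposition \ref{prop R3}). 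In the dominant case, $\pi$ induces a finite extension of function fields $k(C) \hookrightarrow K(D)$, and I would verify unramified-ness of $\pi^*\alpha$ using functoriality of $H^1_{\mathrm{ur}}$ (Theorem \ref{functoriality_unramified}), then verify tameness: for each valuation $w$ on $K(D)$ of geometric type over $K$, it restricts to a valuation $v$ on $k(C)$ (either trivial or corresponding to a closed point of $C$, since $C$ is a curve), and one must show the residue $\partial_w(\pi^*\alpha) = 0$, or rather that $\pi^*\alpha$ lies in the tame subgroup at $w$. When $v$ is nontrivial, $\alpha \in H^1_{\mathrm{tame},v}(k(C), \mathcal{G})$ and compatibility of residues with restriction ($\mathbf{R3a}$) gives $\partial_w(r_{K(D)/k(C)}\alpha) = e \cdot \bar{\varphi}_* \partial_v(\alpha)$; but $\alpha \in H^1_{\mathrm{tame, ur}}(C/k, \mathcal{G})$ means $\partial_v(\alpha) = 0$ (via Proposition \ref{exact_sequence}, or directly from unramified-ness at the point corresponding to $v$), so $\partial_w(\pi^*\alpha) = 0$. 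When $v$ is trivial, I would use $\mathbf{R3c}$ to conclude $\partial_w(\pi^*\alpha) = 0$ as well; combined with Lemma \ref{lemma tame=all} (or the computation that the tame subgroup is the kernel of the residue onto $H^1(k(w), \mathcal{G}_{-1})$), this shows $\pi^*\alpha$ is tame at every $w$.

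The main obstacle I anticipate is the case of a non-dominant (constant) morphism $D \to C$ through a point $\mathrm{Spec}\, E$ where $E/k$ is a non-trivial field extension: one must argue that an element pulled back from $H^1(E, \mathcal{G})$ to a curve over $K \supseteq E$ is automatically tame-unramified. This is precisely the content packaged in the hypothesis, in \cite{Otabe2023}, that the relevant pullback from a field lands in the tame unramified cohomology — it should follow from the fact that for a smooth $K$-curve $D$ with $D \to \mathrm{Spec}\, E$, every geometric valuation $w$ on $K(D)$ over $K$ is trivial on $E$, hence on the constant subfield, so $\partial_w$ kills classes from $E$ by the triviality relation $\mathbf{R3c}$; the unramified part is handled by the specialization property. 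I would present this carefully as the crux, mirroring the argument of \cite[Proposition 4.12]{Otabe2023}, and otherwise note that all remaining verifications are the evident analogues of loc. cit., using the residue formalism set up in Section \ref{section tame cohomology}.
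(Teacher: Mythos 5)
Your proposal follows essentially the same route as the paper: both inclusions are handled identically, with the reverse inclusion split according to whether $\overline{f(D)}$ is all of $C$ or a single closed point $x$, and the constant case resolved by observing that every geometric valuation on $K(D)$ over $K$ restricts trivially to the finite extension $k(x)/k$, so the class lands in $H^1_{\mathrm{ur}}(K(D)/K,\mathcal{G})\subset H^1_{\mathrm{tame,ur}}(D/K,\mathcal{G})$. One small inaccuracy: in the dominant case the induced extension $k(C)\hookrightarrow K(D)$ need not be finite (take $D=C_K$ for $K$ of positive transcendence degree over $k$), but since your tameness check via $\mathbf{R3a}$ and the functoriality of the unramified and tame subgroups apply to arbitrary extensions, this does not affect the argument.
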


\begin{proof}
By considering the identity map $C \rightarrow C$, it is clear from the definition that \[H^1_{\mathrm{ct, ur}}(C/k, \mathcal{G}) \subset H^1_{\mathrm{tame, ur}}(C, \mathcal{G}).\] For the other inclusion, let $\alpha \in H^1_{\mathrm{tame, ur}}(C/k, \mathcal{G})$. Let $K$ be a finitely generated field over $k$ and $f:D \rightarrow C$ be a $k$-morphism from a normal $K$-curve. It suffices for us to show that $f^*(\alpha) \in H^1_{\mathrm{tame, ur}}(D/K, \mathcal{G})$. Let $E = \overline{f(D)}$; then $E=C$ or $E = \{x\}$ for some $x\in C_{(0)}$. In the first case, $f$ is dominant, and therefore, $f^*(\alpha) \in H^1_{\mathrm{tame, ur}}(D/K, \mathcal{G}).$

Now suppose $E= \{x\}$. It suffices to show that the map $H^1(k(x),\mathcal{G}) \rightarrow H^1_{\mathrm{ur}}(D,\mathcal{G})$ factors through $H^1_{\mathrm{tame, ur}}(D/K, \mathcal{G})$. Since $k(x)/k$ is finite, for any valuation $v \in Val(K(D)/K)$, we have that $v_{|k(x)}$ is trivial. Therefore, the image of $H^1(k(x),\mathcal{G}) \rightarrow H^1_{\mathrm{ur}}(D,\mathcal{G})$ lies in $H^1_{\mathrm{ur}}(K(D)/K,\mathcal{G}) \subset H^1_{\mathrm{tame, ur}}(D/K, \mathcal{G})$.
\end{proof}

\begin{proposition}\label{proper_equality}
Let X be a proper smooth geometrically connected variety over $k$, then we have \[H^1_{\mathrm{ct, ur}}(X/k,\mathcal{G}) = H^1_{\mathrm{ur}}(X, \mathcal{G}) = H^1_{\mathrm{ur}}(k(X)/k, \mathcal{G}).\]
\end{proposition}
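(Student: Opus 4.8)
The plan is to prove the two-sided containment
\[
H^1_{\mathrm{ct, ur}}(X/k, \mathcal{G}) \subseteq H^1_{\mathrm{ur}}(X, \mathcal{G}) = H^1_{\mathrm{ur}}(k(X)/k, \mathcal{G}) \subseteq H^1_{\mathrm{ct, ur}}(X/k, \mathcal{G}),
\]
which forces equality throughout. The first inclusion is built into the definition of $H^1_{\mathrm{ct, ur}}$, and the inclusion $H^1_{\mathrm{ur}}(k(X)/k, \mathcal{G}) \subseteq H^1_{\mathrm{ur}}(X, \mathcal{G})$ is automatic, since the rings $\mathcal{O}_{X,x}$ with $x \in X^{(1)}$ form a subfamily of the geometric valuation rings of $k(X)/k$. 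So the real content is (i) the equality $H^1_{\mathrm{ur}}(X, \mathcal{G}) = H^1_{\mathrm{ur}}(k(X)/k, \mathcal{G})$ and (ii) the inclusion $H^1_{\mathrm{ur}}(k(X)/k, \mathcal{G}) \subseteq H^1_{\mathrm{ct, ur}}(X/k, \mathcal{G})$.

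For (i) it remains to show $H^1_{\mathrm{ur}}(X, \mathcal{G}) \subseteq H^1_{\mathrm{ur}}(k(X)/k, \mathcal{G})$. Fixing $\alpha$ in the left-hand group and $v \in Val(k(X)/k)$, I would produce a normal integral scheme $X'$ with a proper birational morphism $\pi \colon X' \to X$ and a codimension-one point $\eta$ of $X'$ with $\mathcal{O}_{X',\eta} = \mathcal{O}_v$; such an $X'$ is obtained by normalizing the closure of the graph of a rational map to the proper variety $X$ from any model on which $v$ is divisorial. Since $X'$ is normal it is regular in codimension one, so $\eta$ lies in the regular locus $X'_{\mathrm{reg}}$; applying the functoriality of unramified cohomology (Theorem \ref{functoriality_unramified}) to $\pi|_{X'_{\mathrm{reg}}} \colon X'_{\mathrm{reg}} \to X$ and using $k(X') = k(X)$, I conclude that $\alpha$ lies in $H^1_{\mathrm{ur}}(X'_{\mathrm{reg}}, \mathcal{G})$, hence is unramified at $\eta$, i.e.\ lies in the image of $H^1(\mathcal{O}_v, \mathcal{G}) \to H^1(k(X), \mathcal{G})$. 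As $v$ is arbitrary, this gives (i).

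The heart of (ii) is the observation that a class which is unramified at a valuation $v$ is automatically tame there: if $\beta \in H^1(K, \mathcal{G})$ comes from $H^1_{\text{\'et}}(\mathcal{O}_v, \mathcal{G})$, then $\beta|_{K_v^{sh}}$ factors through $H^1_{\text{\'et}}\big((\widehat{\mathcal{O}_v})^{sh}, \mathcal{G}\big)$, and the latter vanishes because $(\widehat{\mathcal{O}_v})^{sh}$ is a strictly henselian local ring; hence $\beta \in H^1_{\mathrm{tame},v}(K,\mathcal{G})$. Granting this, I take $\alpha \in H^1_{\mathrm{ur}}(k(X)/k, \mathcal{G})$, which by (i) coincides with $H^1_{\mathrm{ur}}(X, \mathcal{G})$, and a morphism $f \colon C \to X$ from a normal $K$-curve $C$. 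Writing $\overline{C}$ for the regular projective model of $k(C)$ over $K$, so that $C$ is an open subscheme of $\overline{C}$ with finite complement, I extend $f$ to $\overline{f} \colon \overline{C} \to X$ by the valuative criterion of properness at the finitely many missing points, and obtain $\overline{f}^*\alpha \in H^1_{\mathrm{ur}}(\overline{C}, \mathcal{G})$ by functoriality. Since $\overline{C}$ has dimension one over $K$, each $v \in Val(k(C)/K)$ has a center on $\overline{C}$ which is a codimension-one point $c$ with $\mathcal{O}_{\overline{C},c} = \mathcal{O}_v$; therefore $\overline{f}^*\alpha$ comes from $H^1(\mathcal{O}_v, \mathcal{G})$ and, by the observation above, is tame at $v$. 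Thus $\overline{f}^*\alpha \in H^1_{\mathrm{tame}}(k(C)/K, \mathcal{G})$, while restricting it to the open subscheme $C$ shows $f^*\alpha \in H^1_{\mathrm{ur}}(C, \mathcal{G})$; hence $f^*\alpha \in H^1_{\mathrm{ur}}(C,\mathcal{G}) \cap H^1_{\mathrm{tame}}(k(C)/K,\mathcal{G}) = H^1_{\mathrm{tame, ur}}(C/K, \mathcal{G})$. As $f$ is arbitrary, $\alpha \in H^1_{\mathrm{ct, ur}}(X/k, \mathcal{G})$.

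The step I expect to be the main obstacle is (i): replacing the intersection over $X^{(1)}$ by the intersection over all geometric valuations of $k(X)/k$ requires realizing an arbitrary such valuation as a divisorial valuation on a normal model proper over $X$ and transporting unramifiedness along this model via Theorem \ref{functoriality_unramified}; this is precisely where the specialization property of $\mathcal{G}$ (which underlies that functoriality) is needed. By comparison, the two inputs to (ii) — the vanishing of \'etale $H^1$ over a strictly henselian local ring, and the extension of $f$ over $\overline{C}$ — are formal.
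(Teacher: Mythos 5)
Your proposal is correct, and for the substantive inclusion $H^1_{\mathrm{ur}}(X,\mathcal{G}) \subseteq H^1_{\mathrm{ct, ur}}(X/k,\mathcal{G})$ it follows the same route as the paper: extend a morphism from a normal $K$-curve to its normal compactification $\overline{C}$ using properness of $X$, pull back the unramified class by Theorem \ref{functoriality_unramified}, and use that on a complete normal curve every valuation in $Val(k(C)/K)$ is divisorial, so unramifiedness forces tameness. The paper leaves the step ``unramified at $v$ implies tame at $v$'' implicit in the inclusion $H^1_{\mathrm{ur}}(\overline{C}/K,\mathcal{G}) \subset H^1_{\mathrm{tame, ur}}(C/K,\mathcal{G})$; your justification via the vanishing of $H^1_{\text{\'et}}$ of the strictly henselian local ring $(\widehat{\mathcal{O}_v})^{sh}$ is exactly the missing observation. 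The only genuine divergence is in the equality $H^1_{\mathrm{ur}}(X,\mathcal{G}) = H^1_{\mathrm{ur}}(k(X)/k,\mathcal{G})$: the paper simply cites a result of Colliot-Th\'el\`ene, whereas you reprove it by realizing each $v \in Val(k(X)/k)$ as a divisorial valuation on a normal model $X'$ proper over $X$ and transporting unramifiedness along $X'_{\mathrm{reg}} \to X$ via Theorem \ref{functoriality_unramified}. That argument is sound; the one point worth spelling out is why the center of $v$ on $X'$ is again a codimension-one point with local ring exactly $\mathcal{O}_v$ --- this holds because the proper birational map $X' \to Y$ is an isomorphism over $\Spec \mathcal{O}_{Y,y}$, the section furnished by the valuative criterion being a closed immersion through the generic point of the integral scheme $X' \times_Y \Spec \mathcal{O}_{Y,y}$. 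Your version buys self-containedness at the cost of redoing what the cited reference already provides.
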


\begin{proof}

The equality $H^1_{\mathrm{ur}}(X, \mathcal{G}) = H^1_{\mathrm{ur}}(k(X)/k, \mathcal{G})$ is a special case of \cite[Proposition 2.18 (e)]{Colliot-Thelene}.

For the other equality, let $\alpha \in H^1_{\mathrm{ur}}(X, \mathcal{G})$. Let $K$ be a finitely generated extension over $k$ and let $f: C \rightarrow X$ be a morphism from a normal $k$-curve $C$. To prove the proposition, it suffices to show that $f^*(\alpha) \in H^1_{\mathrm{tame, ur}}(C/K,\mathcal{G})$. The morphism $f$ factors through the proper morphism $X_K \rightarrow X$; therefore, the $K$-morphism can be extended to some normal compactification $\overline{C}$ of $C$. Therefore, $f^*(\alpha)\in H^1_{\mathrm{ur}}(\overline{C}/K,\mathcal{G}) \subset H^1_{\mathrm{tame, ur}}(C/K,\mathcal{G})$.
\end{proof}

\section{Pairing with Suslin homology}
\label{section pairing}

In \cite[Section 5]{Otabe2023}, Otabe defines a pairing of the unramified curve-tame cohomology with the Suslin homology. We extend this to a more general class of $\mathbb{A}^1$-invariant sheaves with transfers. First, we recall the definition of Suslin homology. Throughout this section, $\mathcal{G}$ will denote an $\mathbb{A}^1$-invariant \'{e}tale sheaf with transfers over a field $k$ of characteristic $p>0$ satisfying the specialization property \ref{specialization}.

\begin{definition}\label{Suslin_homology}
For a scheme $X \in Sm_k$, we define the Suslin homology as \[H^S_0(X) := \Coker(\Cor_k(\mathbb{A}^1_k, X) \xrightarrow{s_0^* - s_1^*} Z_0(X)),\] where $Z_0(X)$ is the group of zero cycles and $s_0$ and $s_1$ are the morphisms $\Spec k \rightarrow \mathbb{A}^1$ corresponding to the points $0$ and $1$ respectively.
\end{definition}

We are now ready to define the pairing.

\begin{definition}\label{pairing_on_cycles}
Let $X$ be a smooth geometrically connected variety over $k$. We define a pairing $\langle-,-\rangle: Z_0(X) \times H^1_{\mathrm{ct, ur}}(X/k, \mathcal{G}) \rightarrow H^1(k, \mathcal{G})$ as the restriction of the composite \[
Z_0(X) \times  H^1_{\mathrm{ur}}(X,\mathcal{G}) \xrightarrow{(x^*)_{x\in X_{(0)}}} \bigoplus_{x\in X_{(0)}} H^1(k(x),\mathcal{G})\xrightarrow{\sum_{x\in X_{(0)}} c_{k(x)/k}} H^1(k, \mathcal{G}),
\] where $x^*$ is the restriction map corresponding to the morphism $\Spec k(x) \rightarrow X$.
\end{definition}

To show that this pairing induces a pairing $H^S_0(X) \times H^1_{\mathrm{ct, ur}}(X/k, \mathcal{G}) \rightarrow H^1(k, \mathcal{G})$, we need some preliminary lemmas; the proofs are completely analogous to the corresponding results in \cite{Otabe2023}.

\begin{lemma}\label{well_definedness_aux_1}
With the same notation as in Definition \ref{pairing_on_cycles}, let $\phi: C \rightarrow X$ be a $k$-morphism from a normal $k$-curve $C$ to $X$. Then for any $z\in Z_0(C)$ and $\alpha \in H^1_{\mathrm{ct, ur}}(X/k, \mathcal{G})$, we have $\langle\phi_*(z),\alpha\rangle = \langle z,\phi^*(\alpha)\rangle$.
\end{lemma}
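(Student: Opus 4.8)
The plan is to reduce the identity $\langle \phi_*(z), \alpha\rangle = \langle z, \phi^*(\alpha)\rangle$ to a purely functorial statement about restriction and corestriction maps, exactly along the lines of \cite[Lemma 5.?]{Otabe2023}. By bilinearity in the cycle variable, it suffices to treat the case $z = [w]$ for a closed point $w \in C_{(0)}$ with residue field $k(w)$, and then to chase the point $x := \phi(w) \in X_{(0)}$, whose residue field $k(x)$ sits inside $k(w)$ via the finite extension induced by $\phi$.

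First I would unwind the left-hand side. By definition of $\phi_*$ on zero-cycles, $\phi_*[w] = [k(w):k(x)]\,[x]$, so $\langle \phi_*[w],\alpha\rangle = [k(w):k(x)] \cdot c_{k(x)/k}\bigl(x^*(\alpha)\bigr)$, where $x^* : H^1_{\mathrm{ur}}(X,\mathcal{G}) \to H^1(k(x),\mathcal{G})$ is restriction along $\Spec k(x) \to X$. For the right-hand side, $\langle [w], \phi^*(\alpha)\rangle = c_{k(w)/k}\bigl(w^*(\phi^*\alpha)\bigr)$, where $w^*$ is restriction along $\Spec k(w) \to C$ and $\phi^*\alpha \in H^1_{\mathrm{ct,ur}}(C/k,\mathcal{G})$ by Lemma \ref{functoriality_curve_tame}. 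Since $\Spec k(w) \to C \xrightarrow{\phi} X$ factors as $\Spec k(w) \to \Spec k(x) \to X$, functoriality of restriction gives $w^*(\phi^*\alpha) = r_{k(w)/k(x)}\bigl(x^*(\alpha)\bigr)$. Thus the right-hand side equals $c_{k(w)/k}\bigl(r_{k(w)/k(x)}(x^*\alpha)\bigr)$, and by transitivity of corestriction ($\mathbf{R1b}$) this is $c_{k(x)/k}\bigl(c_{k(w)/k(x)}\, r_{k(w)/k(x)}(x^*\alpha)\bigr)$.

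The whole identity therefore collapses to the projection-formula-type statement
\[
c_{k(w)/k(x)}\bigl(r_{k(w)/k(x)}(\beta)\bigr) = [k(w):k(x)]\cdot \beta
\]
for $\beta = x^*(\alpha) \in H^1(k(x),\mathcal{G})$, i.e.\ that corestriction after restriction along a finite extension is multiplication by the degree. This is the standard ``$c\circ r = \deg$'' relation; on the cocycle level it follows from the explicit description of the corestriction map recalled in Remark \ref{remark cocycles}, since $r_{k(w)/k(x)}(\beta)$ is a cocycle inflated from $\Gamma_{k(x)}$ and the sum over the $[k(w):k(x)]$ coset representatives then telescopes. Alternatively it is an instance of axiom $\mathbf{R1c}$ applied to the (possibly inseparable, but then one works with the separable part and a degree factor, or one notes $\mathcal{G}$ has transfers so the relation is forced) pair of extensions; I would cite $\mathbf{R1c}$ for $H^1(-,\mathcal{G})$ together with the fact that $k(w)\otimes_{k(x)}k(w)$ has the appropriate length decomposition.

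The only genuine subtlety — the step I expect to be the main obstacle — is the bookkeeping when $k(w)/k(x)$ is \emph{not separable}, which can happen in characteristic $p$. In that case $c_{k(w)/k(x)} \circ r_{k(w)/k(x)}$ is still multiplication by $[k(w):k(x)]$ for a functor with transfers, but one must be careful that the corestriction on $H^1(-,\mathcal{G})$ defined in $\mathbf{(D2)}$ (via the transpose of a finite dominant map of models, as in Notation \ref{notation G}) is the one for which $\mathbf{R1c}$ was verified, and that the cycle-theoretic pushforward $\phi_*$ from Definition \ref{Suslin_homology} uses the same degree convention. I would handle this by reducing to models: choose a finite dominant $C' \to \mathrm{Spec}\,\mathcal{O}_{X,x}$-type model realizing $\phi$ near $w$, so that both sides of the claimed equality are computed by the same finite morphism of schemes, and then invoke $\mathbf{R1c}$ directly, noting that the multiplicities $l(R_\mathfrak{p})$ appearing there for $R = k(x)\otimes_k k(w)$ (or the relevant tensor product) reproduce exactly the intersection multiplicities defining $\phi_*[w]$. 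Everything else is formal diagram-chasing and I would present it as such.
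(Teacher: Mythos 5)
Your argument is correct and follows essentially the same route as the paper's proof: reduce to a single closed point, compute both sides via functoriality of restriction and transitivity of corestriction (axiom $\mathbf{R1b}$), and conclude from the relation $c_{k(w)/k(x)}\circ r_{k(w)/k(x)} = [k(w):k(x)]$, which the paper invokes without further comment. Your extra discussion of the inseparable case and the cocycle-level verification of $c\circ r=\deg$ is more detail than the paper provides but does not change the structure of the argument.
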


\begin{proof}
We may assume without loss of generality that $z$ is a single point. Let $x=\phi(z)$; then we have $\phi_*(z) = (k(z):k(x))x \in Z_0(X)$. Therefore, we have \[
\langle\phi_*(z),\alpha\rangle = c_{k(x)/k}((\phi_*z)^*\alpha) = (k(z):k(x))\cdot c_{k(x)/k}(x^*\alpha).
\]
However, since $(k(z):k(x)) = c_{k(z)/k(x)}\circ r_{k(z)/k(x)}$, we get 
\[
\begin{split}
\langle\phi_*(z),\alpha\rangle = c_{k(x)/k}(c_{k(z)/k(x)}(r_{k(z)/k(x)}(x^*\alpha))) & = c_{k(z)/k}(r_{k(z)/k(x)}(x^*\alpha)) \\
& = c_{k(z)/k}(z^*\phi^*\alpha) = \langle z,\phi^*\alpha \rangle. 
\end{split}
\] 
Here, the second equality follows from axiom \textbf{R1b} and the third one from the functoriality of the pullback.
\end{proof}

\begin{lemma}\label{well_definedness_aux_2}
Let $\Gamma \rightarrow C$ be a finite surjective morphism between two normal $k$-curves. Let $x: \Spec k(x) \rightarrow C$ be a closed point of $C$ and let $\Gamma_x = \phi^{-1}(x)$ be the inverse image of $x$. Then for any $\alpha \in H^1_{\mathrm{ct, ur}}(\Gamma/k, \mathcal{G})$, we have $\langle \Gamma_x, \alpha\rangle = \langle x, \phi_*\alpha \rangle$.
\end{lemma}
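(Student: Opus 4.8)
\textbf{Proof strategy for Lemma \ref{well_definedness_aux_2}.}

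The plan is to reduce the statement to the identity for the restriction/corestriction maps on $H^1(-,\mathcal{G})$ at the level of residue fields, combined with the degree-formula relations \textbf{R1b} and \textbf{R1c}. First I would unwind both sides of the asserted equality using Definition \ref{pairing_on_cycles}. The right-hand side is $\langle x, \phi_*\alpha\rangle = c_{k(x)/k}\bigl(x^*(\phi_*\alpha)\bigr)$, where $\phi_*\alpha \in H^1_{\mathrm{ct,ur}}(C/k,\mathcal{G})$ is the pushforward constructed via the corestriction map on tame unramified cohomology (using the Lemma immediately preceding Section \ref{section unramified ct}, applied to the finite surjective morphism $\phi\colon \Gamma \to C$). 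Writing $\Gamma_x = \phi^{-1}(x) = \sum_{y \in \Gamma_x} (\text{length})\, y$ as a zero-cycle on $\Gamma$ (I would need to be slightly careful whether $\Gamma \to C$ is separable or whether one must carry multiplicities/lengths of the scheme-theoretic fibre, matching the conventions in \textbf{R1c}), the left-hand side is $\langle \Gamma_x,\alpha\rangle = \sum_{y\in\Gamma_x} m_y\, c_{k(y)/k}\bigl(y^*\alpha\bigr)$.

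The key step is then a base-change computation: I want to compare $x^*(\phi_*\alpha) \in H^1(k(x),\mathcal{G})$ with $\sum_{y\in\Gamma_x} m_y\, c_{k(y)/k(x)}\bigl(y^*\alpha\bigr)$. This is precisely an instance of axiom \textbf{R1c} (the projection formula / base-change formula for corestriction along the Cartesian-type square obtained by pulling back $\Gamma \to C$ along $\operatorname{Spec} k(x) \to C$): the fibre product $k(x) \otimes_{k(C)} k(\Gamma)$, or rather the semilocal ring of $\Gamma$ along $\Gamma_x$, decomposes according to the points $y \in \Gamma_x$ with the stated lengths, and \textbf{R1c} gives $r_{k(x)/k} \circ c_{k(\Gamma)/k(C)} = \sum_y m_y\, c_{k(y)/k(x)} \circ r_{k(y)/k(\Gamma)}$ applied appropriately — here I am using that $x^*$ on $C$ is "restriction to the residue field" and that the pushforward $\phi_*$ on tame unramified cohomology is compatible with the function-field corestriction (which holds by construction). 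Composing with $c_{k(x)/k}$ and using \textbf{R1b} (transitivity of corestriction, $c_{k(x)/k}\circ c_{k(y)/k(x)} = c_{k(y)/k}$) then yields $c_{k(x)/k}(x^*\phi_*\alpha) = \sum_y m_y\, c_{k(y)/k}(y^*\alpha) = \langle \Gamma_x,\alpha\rangle$, which is the desired identity.

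The main obstacle I anticipate is the bookkeeping in the base-change step: making rigorous that the "value at $x$" of the pushforward cohomology class $\phi_*\alpha$ is computed by the fibre decomposition with the correct multiplicities. One must check that the corestriction map on tame unramified cohomology used to define $\phi_*$ agrees, after restricting to residue fields, with the sum of local corestrictions weighted by the lengths of the components of the scheme-theoretic fibre $\Gamma \times_C \operatorname{Spec} k(x)$ — this is where the geometric hypotheses (normality of $\Gamma$ and $C$, finiteness and surjectivity of $\phi$) enter, and where one invokes \textbf{R1c} in the form stated in the excerpt. Everything else is a formal consequence of the already-established functoriality of restriction and corestriction and the transitivity axiom \textbf{R1b}; as noted in the text, the argument is completely parallel to the corresponding lemma in \cite{Otabe2023}.
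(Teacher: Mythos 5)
There is a genuine gap at the central step. You propose to compute $x^*(\phi_*\alpha)$ by invoking axiom \textbf{R1c} for the ``Cartesian square'' obtained by pulling back $\Gamma\rightarrow C$ along $\Spec k(x)\rightarrow C$. But \textbf{R1c} is a statement about two field extensions of a common field $K$ (one finite, one arbitrary) and the primes of their tensor product; here $k(x)$ is \emph{not} an extension of $k(C)$ --- it is the residue field of a codimension-one point --- so the tensor product $k(x)\otimes_{k(C)}k(\Gamma)$ you refer to does not exist, and the map $x^*$ is a specialization map, not a restriction map between fields. Consequently the identity $x^*\circ c_{k(\Gamma)/k(C)}=\sum_y m_y\,c_{k(y)/k(x)}\circ y^*$ is not an instance of \textbf{R1c}, and your proof does not actually establish it. (At best, \textbf{R1c} applied with $F=k(C)_x$ the completion reduces the problem to the complete local situation, which is only the first step; the real content remains.)

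The ingredients the paper actually uses, and which are missing from your argument, are: (i) the description of evaluation at a closed point of a curve on unramified classes as $x^*\beta=\partial_x(\beta\cdot\{\pi\})$ for a uniformizer $\pi$ of $\mathcal{O}_{C,x}$; (ii) the projection formula \textbf{R2b}, which rewrites $\phi_*(\alpha)\cdot\{\pi\}=c_{k(\Gamma)/k(C)}(\alpha\cdot\{\phi^*\pi\})$; (iii) the compatibility of residues with corestrictions, i.e.\ Lemma \ref{corestrictions-residues} (axiom \textbf{R3b}), which converts $\partial_x\circ c_{k(\Gamma)/k(C)}$ into $\sum_y c_{k(y)/k(x)}\circ\partial_y$; and (iv) axiom \textbf{R3e} together with the unramifiedness of $\alpha$ at each $y$ to replace $\partial_y(\alpha\cdot\{\phi^*\pi\})$ by $e_y\,\partial_y(\alpha\cdot\{\pi_y\})=e_y\,y^*\alpha$, where $e_y=v_y(\phi^*\pi)$. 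This last step is also where the correct multiplicities (the ramification indices, which you left as unspecified ``lengths'') actually come from. Your proposal never mentions residue maps, so the key computation is not carried out; only the final application of transitivity \textbf{R1b} matches the paper's argument.
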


\begin{proof}
Let $\pi$ be a uniformizer of the discrete valuation ring $\mathcal{O}_{C,x}$; for each $y\in \Gamma$ mapping to $x$, let $\pi_y$ be a uniformizer for $\mathcal{O}_{\Gamma,y}$. Let $e_y = v_y(\phi^*\pi_x)$ be the ramification index of the map $\mathcal{O}_{C,x} \rightarrow \mathcal{O}_{\Gamma,y}$. By, the axioms \textbf{R2b} and Lemma \ref{corestrictions-residues}, we have \begin{align*}
\langle x, \phi_*(\alpha)\rangle &= c_{k(x)/k}(x^*\phi_*(\alpha))\\
&=c_{k(x)/k}(\partial_x(\phi^*(\alpha)\cdot \pi)\\
&=c_{k(x)/k}(\partial_x(c_{k(\Gamma)/k(C)}(\alpha\cdot \phi^*\pi)))\\
&=c_{k(x)/k}(\sum_{y} c_{k(y)/k(x)} (\partial_y(\alpha\cdot\phi^*\pi)))\\
&=\sum_y e_y c_{k(y)/k}(\partial_y(\alpha\cdot \pi_y))\\
&=\sum_y e_y c_{k(y)/k}(y^*\alpha)\\
&=\sum_y e_y \langle y,\alpha\rangle = \langle\Gamma_x, \alpha \rangle.	
\end{align*}
\end{proof}

\begin{proposition}\label{well_definedness_of_pairing}
Let $X$ be a smooth geometrically connected variety over $k$. Let $\mathcal{G}$ be such that the projection map $\mathbb{A}^1_k \rightarrow \Spec k$ induces an isomorphism $H^1(k,\mathcal{G}) \xrightarrow{\sim} H^1_{\mathrm{tame, ur}}(\mathbb{A}^1_k/k, \mathcal{G})$.  The pairing $\langle-,-\rangle$ defined in Definition \ref{pairing_on_cycles} factors through the Suslin homology $H^S_0(X)$ to form a pairing\[
\langle-,-\rangle: H^S_0(X) \times H^1_{\mathrm{ct, ur}}(X/k, \mathcal{G}) \rightarrow H^1(k, \mathcal{G}).
\]
\end{proposition}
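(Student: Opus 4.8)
The plan is to show that the pairing from Definition \ref{pairing_on_cycles} kills the image of the Suslin relations, i.e.\ that for every finite correspondence $Z \in \Cor_k(\mathbb{A}^1_k, X)$ and every $\alpha \in H^1_{\mathrm{ct, ur}}(X/k,\mathcal{G})$ one has $\langle s_0^*(Z) - s_1^*(Z), \alpha\rangle = 0$. Since $\Cor_k(\mathbb{A}^1_k, X)$ is generated by integral closed subschemes $W \subset \mathbb{A}^1_k \times_k X$ that are finite and surjective over $\mathbb{A}^1_k$, it suffices to treat such a generator $W$. The first step is to reduce to curves: let $C$ be the normalization of $W$ (a normal $k$-curve), with the two projections assembling into a morphism $\phi = (p, f) \colon C \to \mathbb{A}^1_k \times_k X$, where $p \colon C \to \mathbb{A}^1_k$ is finite surjective and $f \colon C \to X$ is the composite $C \to W \to X$. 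Writing $C_0 = p^{-1}(0)$ and $C_1 = p^{-1}(1)$ as zero-cycles on $C$, the definition of $s_i^*$ on correspondences gives $s_i^*(W) = f_*(C_i)$ in $Z_0(X)$, so by Lemma \ref{well_definedness_aux_1} we get $\langle s_i^*(W), \alpha\rangle = \langle f_*(C_i), \alpha\rangle = \langle C_i, f^*(\alpha)\rangle$.

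Next, set $\beta := f^*(\alpha) \in H^1_{\mathrm{ct, ur}}(C/k, \mathcal{G})$, using Lemma \ref{functoriality_curve_tame} to know $\beta$ lands in the curve-tame unramified cohomology of $C$. By the previous paragraph it is enough to prove $\langle C_0, \beta\rangle = \langle C_1, \beta\rangle$. Now apply Lemma \ref{well_definedness_aux_2} to the finite surjective morphism $p \colon C \to \mathbb{A}^1_k$ (both normal $k$-curves): for the closed point $0 \colon \Spec k \to \mathbb{A}^1_k$ we have $p^{-1}(0) = C_0$, hence $\langle C_0, \beta\rangle = \langle 0, p_*\beta\rangle$; similarly $\langle C_1, \beta\rangle = \langle 1, p_*\beta\rangle$. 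Here $p_*\beta \in H^1_{\mathrm{tame, ur}}(\mathbb{A}^1_k/k, \mathcal{G})$ because the pushforward along a finite surjective morphism of normal curves preserves the tame unramified subgroup by the last Lemma of Section \ref{section unramified} (applied together with the fact that $H^1_{\mathrm{ct, ur}}(C/k,\mathcal{G}) = H^1_{\mathrm{tame, ur}}(C/k,\mathcal{G})$ for the curve $C$).

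Finally, invoke the hypothesis: the projection $\pi \colon \mathbb{A}^1_k \to \Spec k$ induces an isomorphism $H^1(k, \mathcal{G}) \xrightarrow{\sim} H^1_{\mathrm{tame, ur}}(\mathbb{A}^1_k/k, \mathcal{G})$, with inverse given by pullback along either $0$ or $1$ (both are sections of $\pi$, and $0^* \circ \pi^* = \mathrm{id} = 1^* \circ \pi^*$). Hence every element of $H^1_{\mathrm{tame, ur}}(\mathbb{A}^1_k/k, \mathcal{G})$ is of the form $\pi^*(\gamma)$ for a unique $\gamma \in H^1(k,\mathcal{G})$, and then $0^*(\pi^*\gamma) = \gamma = 1^*(\pi^*\gamma)$. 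Applying this to $p_*\beta$ and unwinding the definition of $\langle 0, -\rangle$ and $\langle 1, -\rangle$ (which are just $0^*$ and $1^*$ followed by the corestriction $c_{k/k} = \mathrm{id}$, since $0$ and $1$ are $k$-rational points) yields $\langle 0, p_*\beta\rangle = \langle 1, p_*\beta\rangle$, completing the argument.

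The main obstacle is the bookkeeping in the reduction step: one must check carefully that passing to the normalization $C$ of the correspondence $W$ is compatible with the formula $s_i^*(W) = f_*(C_i)$ at the level of zero-cycles (including the multiplicities/lengths coming from non-reducedness of the fibers of $W \to \mathbb{A}^1_k$ over $0$ and $1$), and that the two auxiliary lemmas apply verbatim to the (possibly singular, but normalized) curve in play. This is exactly the point where Otabe's argument in \cite[Section 5]{Otabe2023} has to be transcribed, and the only new input needed beyond his setup is that $p_*\beta$ stays tame unramified, which is handled by the pushforward result of Section \ref{section unramified}; everything else is formal manipulation using \textbf{R1b}, \textbf{R2b}, and the residue compatibilities already established.
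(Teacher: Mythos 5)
Your argument is correct and follows essentially the same route as the paper's proof: reduce to an elementary correspondence, pass to the normalization of the curve, apply Lemma \ref{well_definedness_aux_1} and Lemma \ref{well_definedness_aux_2} to rewrite $\langle s_i^*(W),\alpha\rangle$ as $s_i^*$ of a class in $H^1_{\mathrm{tame,ur}}(\mathbb{A}^1_k/k,\mathcal{G})$, and conclude from the hypothesis that $\pi^*$ is an isomorphism so that $s_0^*=s_1^*$ there. The multiplicity bookkeeping you flag is handled in the paper by the identity $\phi^{-1}(0)-\phi^{-1}(1)=\pi_*\bigl(\widetilde{\phi^{-1}(0)}-\widetilde{\phi^{-1}(1)}\bigr)$ coming from $\Div(t/(t-1))$ and its compatibility with pushforward from the normalization, which is exactly the check you describe.
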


\begin{proof}
Let $\alpha \in H^1_{\mathrm{ct, ur}}(X/k, \mathcal{G})$ and let $\Gamma \in \Cor_k(\mathbb{A}^1_k, X)$ be an elementary correspondence. Let $\phi: \Gamma \rightarrow \mathbb{A}^1_k = \Spec k[t]$ and $\psi: \Gamma \rightarrow X$ be the projections to $\mathbb{A}_k^1$ and $X$ respectively. By definition, $\phi$ is finite and surjective. In the notation of Definition \ref{Suslin_homology}, $(s_0^*-s_1^*)(\Gamma) = \psi_*(\phi ^{-1}(0) - \phi ^{-1}(1))$. 

Therefore, to show that the pairing $\langle-,-\rangle$ factors through $H^S_0(X)$, it suffices to show that $\langle\psi_*(\phi ^{-1}(0) - \phi ^{-1}(1)), \alpha\rangle = 0$. Note that we have
\[
\begin{split}
\phi ^{-1}(0) - \phi ^{-1}(1) = \phi^*\Div_{\mathbb{A}^1_k}(t/t-1) = \Div_{\Gamma}(\phi^*t/\phi^*t-1) & = \pi_*\Div_{\widetilde{\Gamma}}(\phi^*t/\phi^*t-1) \\ 
& = \pi_*(\widetilde{\phi ^{-1}(0)} - \widetilde{\phi ^{-1}(1)}),
\end{split}
\] 
where $\pi: \widetilde{\Gamma} \rightarrow \Gamma$ is the normalisation and $\widetilde{\phi ^{-1}(i)}$ is the pullback of $\phi ^{-1}(i)$ along $\pi$. Therefore, by Lemma \ref{well_definedness_aux_1}, we have \[
\langle\psi_*(\phi ^{-1}(0) - \phi ^{-1}(1)), \alpha\rangle = \langle\psi_*(\pi_*(\widetilde{\phi ^{-1}(0)} - \widetilde{\phi ^{-1}(1)})), \alpha\rangle = \langle \widetilde{\phi ^{-1}(0)} - \widetilde{\phi ^{-1}(1)}, (\psi\circ\pi)^*\alpha\rangle.
\]

So, it suffices to show that $\langle \widetilde{\phi ^{-1}(0)}, (\psi\circ\pi)^*\alpha\rangle = \langle \widetilde{\phi ^{-1}(1)}, (\psi\circ\pi)^*\alpha\rangle.$ By Lemma \ref{well_definedness_aux_2} , we have $\langle \widetilde{\phi ^{-1}(i)}, (\psi\circ\pi)^*\alpha\rangle = s_i^*(\phi\circ\pi)_*(\alpha)$.

Each $s_i$ is a section of the projection morphism $\mathbb{A}^1_k \rightarrow \Spec k$, therefore, by out assumption on $\mathcal{G}$ that $H^1(k,\mathcal{G}) \xrightarrow{\sim} H^1_{\mathrm{tame, ur}}(\mathbb{A}^1_k/k, \mathcal{G})$ is an isomorphism, we get that $s_0^* = s_1^*$ on $H^1_{\mathrm{tame, ur}}(\mathbb{A}^1_k/k,\mathcal{G})$, thus proving the proposition.

\end{proof}

We prove the following compatibility properties for the pairing $\langle-,-\rangle$, which will be used to prove Proposition \ref{curve_tame_equals_cohomology_of_base_field}.

\begin{lemma}\label{curve_tame_generic_compatibility}
Let $K = k(X)$ be the function field of $X$ and let $\eta\in (X_K)_{(0)}$ be the point corresponding to the generic point of $X$. Let $\mathcal{G}$ be as in Proposition \ref{well_definedness_of_pairing}. Then the composite map\[
H^1_{\mathrm{ct, ur}}(X/k, \mathcal{G}) \rightarrow H^1_{\mathrm{ct, ur}}(X_K/K, \mathcal{G}) \xrightarrow{\langle \eta,-\rangle} H^1(K,\mathcal{G})
\]
coincides with the inclusion $H^1_{\mathrm{ct, ur}}(X/k, \mathcal{G}) \subset H^1(K,\mathcal{G})$.
\end{lemma}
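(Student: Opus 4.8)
The plan is to unwind the three maps making up the composite in the statement and reduce it to the functoriality of unramified cohomology (Theorem \ref{functoriality_unramified}) together with the fact that corestriction along an identity field extension is the identity.

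First I would note that the point $\eta \in (X_K)_{(0)}$ is the image of the section $s\colon \Spec K \to X_K$ of the structure morphism $X_K \to \Spec K$ which, by the universal property of the fibre product $X_K = X\times_k \Spec K$, is determined by the generic point $g\colon \Spec k(X) = \Spec K \to X$ of $X$ and the identity of $\Spec K$. In particular the residue field $k(\eta)$ is canonically $K$, so by Definition \ref{pairing_on_cycles} the map $\langle \eta, -\rangle$ is simply $c_{k(\eta)/K}\circ \eta^* = c_{K/K}\circ \eta^* = \eta^*$, where $\eta^*\colon H^1_{\mathrm{ur}}(X_K,\mathcal{G}) \to H^1(K,\mathcal{G})$ is the pullback attached to $s$ by Theorem \ref{functoriality_unramified}. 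Here one uses that $X$ is geometrically integral (being smooth and geometrically connected), so that $X_K$ is a regular integral scheme and the theorem applies; note also that $\eta$ is a point of codimension $d = \dim X$, so defining $\eta^*$ on unramified classes already uses the specialization property, which is in force throughout this section.

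Next, by the construction in Lemma \ref{functoriality_curve_tame}, the first arrow $H^1_{\mathrm{ct, ur}}(X/k,\mathcal{G}) \to H^1_{\mathrm{ct, ur}}(X_K/K,\mathcal{G})$ is the restriction of the pullback $\mathrm{pr}^*\colon H^1_{\mathrm{ur}}(X,\mathcal{G}) \to H^1_{\mathrm{ur}}(X_K,\mathcal{G})$ along the projection $\mathrm{pr}\colon X_K \to X$. Hence the composite in the statement equals the restriction to $H^1_{\mathrm{ct, ur}}(X/k,\mathcal{G})$ of $\eta^*\circ \mathrm{pr}^*$, which by the contravariant functoriality of $H^1_{\mathrm{ur}}(-,\mathcal{G})$ is $(\mathrm{pr}\circ s)^*$. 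By the very construction of $s$ one has $\mathrm{pr}\circ s = g$, the generic point of $X$ (on an affine chart $X = \Spec A$ this is the composite $A \to A\otimes_k k(X) \to k(X)$, that is, the localization $A\hookrightarrow k(X)$, whose kernel is the generic point). Since $g$ is dominant and induces the identity on $k(X)$, and $H^1_{\mathrm{ur}}(X,\mathcal{G})$ is by definition the subgroup of $H^1(k(X),\mathcal{G})$ cut out by the images of the $H^1(\mathcal{O}_{X,x},\mathcal{G})$, the pullback $g^* = (\mathrm{pr}\circ s)^*$ is the tautological inclusion $H^1_{\mathrm{ur}}(X,\mathcal{G}) \hookrightarrow H^1(k(X),\mathcal{G}) = H^1(K,\mathcal{G})$; restricting to $H^1_{\mathrm{ct, ur}}(X/k,\mathcal{G})$ gives exactly the assertion.

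The only step requiring genuine care is the last one: one must check that the functorial pullback of Theorem \ref{functoriality_unramified} along a dominant morphism — in particular along the generic point $g$ — restricts to the naive restriction of cohomology classes of function fields (and hence, for $g$, to the tautological inclusion), rather than merely agreeing with it a posteriori. This is a property of Colliot-Thel\`ene's construction; everything else is a formal diagram chase with the chain of inclusions $H^1_{\mathrm{ct, ur}}(-/-,\mathcal{G}) \subseteq H^1_{\mathrm{ur}}(-,\mathcal{G}) \subseteq H^1(\text{function field},\mathcal{G})$ and the compatibility of the restriction and corestriction maps with them.
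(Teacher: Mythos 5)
Your proposal is correct and follows essentially the same route as the paper: both identify $\langle\eta,-\rangle$ with $\eta^*$ (since $k(\eta)=K$), and then use the functoriality of the (curve-tame) unramified pullback to identify $\eta^*\circ\pi^*$ with the pullback along the generic point $\Spec K \to X$, i.e.\ the tautological inclusion. Your extra remark about checking that Colliot-Th\'el\`ene's functorial pullback along a dominant morphism is the naive restriction on function fields is a reasonable point of care, but it is subsumed in the paper's appeal to Lemma \ref{functoriality_curve_tame}.
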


\begin{proof}
Let $\pi: X_K \rightarrow X$ be the projection. Consider the following diagram. Clearly, the lemma follows from the commutativity of the diagram.\[\begin{tikzcd}
	{H^1_{\mathrm{ct, ur}}(X/k, \mathcal{G})} & {H^1_{\mathrm{ct, ur}}(X_K/K, \mathcal{G})} \\
	{H^1_{\mathrm{ur}}(X, \mathcal{G})} & {H^1_{\mathrm{ur}}(X_K, \mathcal{G})} \\
	& {H^1(K, \mathcal{G})}
	\arrow["{\pi^*}", from=1-1, to=1-2]
	\arrow[hook, from=1-1, to=2-1]
	\arrow[hook, from=1-2, to=2-2]
	\arrow["{\pi^*}", from=2-1, to=2-2]
	\arrow[hook, from=2-1, to=3-2]
	\arrow["{\eta^*}", from=2-2, to=3-2]
\end{tikzcd}\]

The upper square is commutative by Lemma \ref{functoriality_curve_tame}. By applying the same lemma to the morphisms $\Spec K \xrightarrow{\eta} \Spec X_K \rightarrow X$, we get the commutativity of the bottom triangle.
\end{proof}

\begin{lemma}\label{curve_tame_special_compatibility}
Let $x \in X_{(0)}$ be such that $k(x)$ is separable over $k$. Let $\mathcal{G}$ be as in Proposition \ref{well_definedness_of_pairing}. Let $\alpha \in H^1_{\mathrm{ct, ur}}(X/k, \mathcal{G})$. Then for any finitely generated field extension $K/k$, we have $\langle x,\alpha\rangle_K = \langle x_K, \alpha_K\rangle$, where $x_K \in Z_0(X_K)$ and $\alpha_K \in H^1_{\mathrm{ct, ur}}(X_K/K, \mathcal{G})$ are the pullbacks of $x$ and $\alpha$ respectively along the projection morphism $\pi: X_K\rightarrow X$. 
\end{lemma}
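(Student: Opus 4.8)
The statement is a base-change compatibility for the pairing: pulling back a zero-cycle $x$ and a curve-tame class $\alpha$ to $X_K$ and then pairing equals first pairing over $k$ and then restricting to $K$. The natural approach is to reduce to a diagram chase among residue/corestriction maps, invoking the explicit description of the pairing in Definition \ref{pairing_on_cycles} together with the cycle-premodule axioms \textbf{(R1a--c)} established in Theorem \ref{theorem existence_Kato_complex}. First I would unwind both sides: by definition $\langle x,\alpha\rangle = c_{k(x)/k}(x^*\alpha)$, so $\langle x,\alpha\rangle_K = r_{K/k}\bigl(c_{k(x)/k}(x^*\alpha)\bigr)$; on the other hand $x_K$ is the pullback cycle $\pi^{-1}(x)$ on $X_K$, which decomposes (possibly with multiplicities, using the length function $l(R_\mathfrak{p})$) over the points of $\Spec(k(x)\otimes_k K)$, and $\langle x_K,\alpha_K\rangle = \sum_{\mathfrak{p}} l(R_\mathfrak p)\, c_{k(x_\mathfrak p)/K}\bigl((x_\mathfrak p)^*\pi^*\alpha\bigr)$.

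The key step is that this is exactly the content of the projection-formula-type axiom \textbf{R1c}: for the finite extension $\varphi\colon k\to k(x)$ and the extension $\psi\colon k\to K$, with $R = K\otimes_k k(x)$, one has $\psi_*\circ\varphi^* = \sum_{\mathfrak p\in\Spec R} l(R_\mathfrak p)\,\varphi^*_\mathfrak p(\psi_\mathfrak p)_*$. Applying both sides to the class $x^*\alpha \in H^1(k(x),\mathcal{G})$ and then matching $(\psi_\mathfrak p)_*(x^*\alpha)$ with $(x_\mathfrak p)^*\pi^*\alpha$ via functoriality of restriction (axiom \textbf{R1a}, since both are restrictions of $\alpha$ along $\Spec k(x_\mathfrak p)\to X$), the two expressions coincide. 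I would write this as a short commutative-diagram chase: the outer rectangle is the base change of the diagram $Z_0(X)\times H^1_{\mathrm{ur}}(X,\mathcal G)\to \bigoplus_x H^1(k(x),\mathcal G)\to H^1(k,\mathcal G)$ along $K/k$, and each square commutes by one of the cited axioms. The hypothesis that $k(x)/k$ is separable is what guarantees $k(x)\otimes_k K$ is reduced, so the length multiplicities are all $1$ and $\pi^{-1}(x)$ really is the reduced fiber; this keeps the bookkeeping clean and lets one quote \textbf{R1c} in its simplest form.

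The main obstacle is purely organizational rather than deep: making precise the identification of the cycle $x_K = \pi^*x$ with the formal sum $\sum_\mathfrak{p} l(R_\mathfrak p)[\Spec k(x_\mathfrak p)]$ and checking that the restriction maps appearing in the two computations of the pairing genuinely agree with the field-theoretic restriction maps $\psi_*$ of the cycle premodule — i.e.\ that $x^*$ (pullback along $\Spec k(x)\to X$) composed with base change to $K$ equals $x_\mathfrak p^*$ composed with $\pi^*$. This follows from the compatibility of $\mathcal G$-cohomology restriction with composition of morphisms of schemes, but it must be stated carefully because $\alpha$ lives a priori in $H^1_{\mathrm{ur}}(X,\mathcal G)$ and we are using that its image in $H^1(k(X),\mathcal G)$ restricts compatibly; the functoriality in Lemma \ref{functoriality_curve_tame} and Conventions \ref{conventions_homology_section}(4) handle this. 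I would also remark that, since $\alpha$ is curve-tame, $\alpha_K = \pi^*\alpha$ indeed lands in $H^1_{\mathrm{ct, ur}}(X_K/K,\mathcal G)$ by Lemma \ref{functoriality_curve_tame}, so the pairing $\langle x_K,\alpha_K\rangle$ is defined; with these identifications in hand the equality is immediate from \textbf{R1c}.
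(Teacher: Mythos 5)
Your proposal is correct and takes essentially the same route as the paper's proof: unwind $\langle x,\alpha\rangle_K = r_{K/k}(c_{k(x)/k}(x^*\alpha))$, apply axiom \textbf{R1c} to the finite extension $k\to k(x)$ and the extension $k\to K$ (with separability of $k(x)/k$ ensuring $K\otimes_k k(x)$ is a product of fields so all multiplicities are $1$), and identify the resulting restricted classes with $y^*\alpha_K$ by functoriality. No substantive differences.
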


\begin{proof}
Since $k(x)/k$ is a separable extension, the fibre $\pi^{-1}(x) = \Spec(K\otimes_k k(x))$ is a product of fields. Therefore, $x_K = \sum_{\pi(y)=x} y$ in $Z_0(X_K)$.We have \begin{align*}\langle x, \alpha \rangle_K &=  r_{K/k}(c_{k(x)/k}(x^*\alpha))\\
&=\sum_{\pi(y)=x} c_{K(y)/K}(r_{K(y)/K(x)}(x^*\alpha))\\
&=\sum_{\pi(y)=x} c_{K(y)/K}(y^*\alpha_K)\\
&=\langle x_K,\alpha_K\rangle,
\end{align*} where the second equality comes from axiom \textbf{R1c}.
\end{proof}

\begin{theorem}\label{curve_tame_equals_cohomology_of_base_field}
Let $X$ be a smooth geometrically connected variety over $k$. Let $\mathcal{G}$ be as in Proposition \ref{well_definedness_of_pairing}. Suppose that for any finitely generated field $K$ over $k$, the degree map $H^S_0(X_K) \rightarrow \mathbb{Z}$ is an isomorphism. Then the pullback $H^1(k, \mathcal{G}) \rightarrow H^1_{\mathrm{ct, ur}}(X/k,\mathcal{G}) $ is an isomorphism.
\end{theorem}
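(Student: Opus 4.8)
The plan is to exhibit an explicit two-sided inverse to the pullback $p^*\colon H^1(k,\mathcal{G})\to H^1_{\mathrm{ct, ur}}(X/k,\mathcal{G})$, where $p\colon X\to\Spec k$ is the structure morphism, built from the pairing of Proposition \ref{well_definedness_of_pairing} and a fixed zero-cycle of degree one. Applying the degree hypothesis with $K=k$, the isomorphism $H^S_0(X)\xrightarrow{\sim}\mathbb{Z}$ produces a zero-cycle $z\in Z_0(X)$ of degree $1$, and any two such cycles agree in $H^S_0(X)$. The candidate inverse is $\alpha\mapsto\langle z,\alpha\rangle$: this is well defined because $H^1_{\mathrm{ct, ur}}(X/k,\mathcal{G})\subset H^1_{\mathrm{ur}}(X,\mathcal{G})$, on which the pairing of Definition \ref{pairing_on_cycles} is defined, and by Proposition \ref{well_definedness_of_pairing} the result depends only on the class of $z$ in $H^S_0(X)$.

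The first step is a projection formula: for every $\beta\in H^1(k,\mathcal{G})$ one has $\langle z,p^*\beta\rangle=\beta$. Indeed $p^*\beta\in H^1_{\mathrm{ur}}(X,\mathcal{G})$ by functoriality of unramified cohomology (Theorem \ref{functoriality_unramified}), and writing $z=\sum_x n_x x$ the composite $\Spec k(x)\to X\xrightarrow{p}\Spec k$ is the structure map of $k(x)$, so $x^*(p^*\beta)=r_{k(x)/k}(\beta)$. Hence, using the standard identity $c_{k(x)/k}\circ r_{k(x)/k}=[k(x):k]\cdot\mathrm{id}$ (cf. axiom \textbf{R2c}),
\[
\langle z,p^*\beta\rangle=\sum_x n_x\,c_{k(x)/k}\bigl(r_{k(x)/k}\beta\bigr)=\sum_x n_x[k(x):k]\,\beta=(\deg z)\,\beta=\beta.
\]
In particular $p^*$ is injective: if $p^*\beta=0$ then $\beta=\langle z,0\rangle=0$.

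For surjectivity, fix $\alpha\in H^1_{\mathrm{ct, ur}}(X/k,\mathcal{G})$ and set $\beta=\langle z,\alpha\rangle$; I claim $p^*\beta=\alpha$. Both $p^*\beta$ and $\alpha$ lie in $H^1_{\mathrm{ur}}(X,\mathcal{G})$, a subgroup of $H^1(k(X),\mathcal{G})$, so it suffices to check that they have the same image in $H^1(k(X),\mathcal{G})$. Set $K=k(X)$, let $\pi\colon X_K\to X$ be the projection and $\eta\in(X_K)_{(0)}$ the point corresponding to the generic point of $X$ (a $K$-rational point of $X_K$). By Lemma \ref{curve_tame_generic_compatibility} the image of $\alpha$ in $H^1(K,\mathcal{G})$ is $\langle\eta,\pi^*\alpha\rangle$, while the image of $p^*\beta$ is $r_{K/k}(\beta)=r_{K/k}\langle z,\alpha\rangle$. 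Since $k$ is perfect, each residue field $k(x)$ with $x\in X_{(0)}$ is separable over $k$, so Lemma \ref{curve_tame_special_compatibility} (applied to each point of $z$ and extended $\mathbb{Z}$-linearly) gives $r_{K/k}\langle z,\alpha\rangle=\langle z_K,\pi^*\alpha\rangle$, where $z_K\in Z_0(X_K)$ is the pullback of $z$ along $\pi$; separability also yields $\deg z_K=\deg z=1=\deg\eta$. By the degree hypothesis $H^S_0(X_K)\xrightarrow{\sim}\mathbb{Z}$, so $z_K$ and $\eta$ represent the same class in $H^S_0(X_K)$, and Proposition \ref{well_definedness_of_pairing} applied over $K$ gives $\langle z_K,\pi^*\alpha\rangle=\langle\eta,\pi^*\alpha\rangle$. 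Chaining these equalities shows that $\alpha$ and $p^*\beta$ have the same image in $H^1(k(X),\mathcal{G})$, hence $\alpha=p^*\beta$; in particular $p^*$ surjects onto $H^1_{\mathrm{ct, ur}}(X/k,\mathcal{G})$, and the two computations together say that $\alpha\mapsto\langle z,\alpha\rangle$ is a two-sided inverse of $p^*$.

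The conceptual content is concentrated in Proposition \ref{well_definedness_of_pairing} and the compatibility Lemmas \ref{curve_tame_generic_compatibility} and \ref{curve_tame_special_compatibility}; what remains for the theorem is assembly. The points needing a little care are: that $\pi^*\alpha$ again lies in $H^1_{\mathrm{ct, ur}}(X_K/K,\mathcal{G})$, so that the pairings over $K$ make sense (Lemma \ref{functoriality_curve_tame}); that Proposition \ref{well_definedness_of_pairing} still applies after base change from $k$ to $K$, which uses the $\mathbb{A}^1_K/K$-analogue of the hypothesis $H^1(k,\mathcal{G})\xrightarrow{\sim}H^1_{\mathrm{tame, ur}}(\mathbb{A}^1_k/k,\mathcal{G})$ — already implicit in the proofs of the two compatibility lemmas; and the separability bookkeeping used to identify $z_K$ and compute its degree. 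I expect no genuine obstacle beyond these routine verifications.
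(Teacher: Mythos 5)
Your proof is correct and follows essentially the same route as the paper: inverse given by pairing with a degree-one zero-cycle, injectivity via $c_{k(x)/k}\circ r_{k(x)/k}=[k(x):k]$, and surjectivity by comparing $\langle z_K,\pi^*\alpha\rangle$ with $\langle\eta,\pi^*\alpha\rangle$ in $H^1(k(X),\mathcal{G})$ using Lemmas \ref{curve_tame_special_compatibility} and \ref{curve_tame_generic_compatibility} together with the degree hypothesis over $k(X)$. The only difference is that you use perfectness of $k$ to get separability of the residue fields $k(x)$, whereas the paper invokes a theorem of Gabber--Liu--Lorenzini to choose $z$ with separable support; under the paper's standing convention that $k$ is perfect, your shortcut is legitimate.
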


\begin{proof}
Since the degree map from Suslin homology is surjective, there exists an element $z\in Z_0(X)$ of degree $1$. To show that $H^1(k, \mathcal{G}) \rightarrow H^1_{\mathrm{ct, ur}}(X/k,\mathcal{G}) $ is injective, it suffices to show that the composition \[
H^1(k, \mathcal{G}) \rightarrow H^1_{\mathrm{ct, ur}}(X/k, \mathcal{G}) \xrightarrow{\langle z,-\rangle} H^1(k, \mathcal{G})
\] is the identity.
We have the following commutative diagram: \[\begin{tikzcd}
	{H^1(k,\mathcal{G)}} & {H^1_{\mathrm{ct, ur}}(k,\mathcal{G)}} \\
	{H^1(k,\mathcal{G)}} & {H^1(k(z),\mathcal{G)}}
	\arrow[from=1-1, to=1-2]
	\arrow[from=1-1, to=2-1]
	\arrow["{r_{k(z)/k}}", from=1-1, to=2-2]
	\arrow["{z^*}", from=1-2, to=2-2]
	\arrow["{c_{k(z)/k}}"', from=2-2, to=2-1]
\end{tikzcd}.\] Since $c_{k(z)/k} \circ r_{k(z)/k} = \deg(z) =1$, we are done.

We now prove the surjectivity of the map. Let $\alpha \in H^1_{\mathrm{ct, ur}}(X/k, \mathcal{G})$. Let $K=k(X)$ be the fraction field of $X$ and let $\eta \in (X_K)_{(0)}$ be the point corresponding to the generic point of $X$. Since the degree map $H^S_0(X)\rightarrow \mathbb{Z}$ is surjective, by \cite[Theorem 9.2]{Gabber-Liu-Lorenzini}, there exists $z \in Z_0(X)$ such that $\deg(z) = 1$ and such that for every $x$ in the support of $z$, the extension $k(x)/k$ is separable.  Therefore, by Lemma \ref{curve_tame_special_compatibility}, we have \[\langle z_K, \alpha_K \rangle = \langle z, \alpha\rangle_K.\] Since $H^0_S(X_K) \rightarrow \mathbb{Z}$ is injective, and $\deg(z_K)=1$, we have $z_K = \eta$. Therefore, we have \[\langle z_K,\alpha_K\rangle = \langle \eta, \alpha_K\rangle = \alpha,\] where the last equality follows from Lemma \ref{curve_tame_generic_compatibility}. Therefore, $\alpha = \langle z,\alpha\rangle_K$, which lies in the image of $H^1(k,\mathcal{G}) \rightarrow H^1(k(X),\mathcal{G})$.
\end{proof}

In the case of smooth proper varieties, we get the following result.

\begin{corollary}
Let $X$ be a smooth proper variety over $k$ such that for any field extension $K$ of $k$, the degree map $CH_0(X_K) \rightarrow \mathbb{Z}$ is an isomorphism. Let $\mathcal{G}$ be as in Proposition \ref{well_definedness_of_pairing}. Then the natural map $H^1(k, \mathcal{G}) \rightarrow H^1_{\mathrm{ur}}(k(X)/k,\mathcal{G})$ is an isomorphism.
\end{corollary}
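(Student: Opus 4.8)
The plan is to deduce this corollary from Theorem \ref{curve_tame_equals_cohomology_of_base_field} by checking its two hypotheses in the proper case and identifying the relevant cohomology groups. First I would recall from Proposition \ref{proper_equality} that for a proper smooth geometrically connected $X$ over $k$ one has
\[
H^1_{\mathrm{ct, ur}}(X/k,\mathcal{G}) = H^1_{\mathrm{ur}}(X,\mathcal{G}) = H^1_{\mathrm{ur}}(k(X)/k,\mathcal{G}),
\]
so that the target of the map in question is exactly the curve-tame cohomology group appearing in Theorem \ref{curve_tame_equals_cohomology_of_base_field}, and the map $H^1(k,\mathcal{G}) \to H^1_{\mathrm{ur}}(k(X)/k,\mathcal{G})$ is precisely the pullback studied there.

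Next I would verify the hypothesis on Suslin homology. By \cite[Theorem 3.1]{Spiess-Szamuely} (Suslin–Voevodsky type comparison; in any case, the standard fact that $H^S_0$ of a smooth \emph{proper} variety agrees with $\CH_0$), for $X$ smooth and proper the canonical map $H^S_0(X_K) \to \CH_0(X_K)$ is an isomorphism for every finitely generated extension $K/k$; since the same holds after base change, the assumed isomorphism $\CH_0(X_K) \xrightarrow{\sim} \mathbb{Z}$ translates into the isomorphism $H^S_0(X_K) \xrightarrow{\sim} \mathbb{Z}$ required in Theorem \ref{curve_tame_equals_cohomology_of_base_field}. Together with the standing assumption (inherited from Proposition \ref{well_definedness_of_pairing}) that $\mathcal{G}$ is such that $\mathbb{A}^1_k \to \Spec k$ induces $H^1(k,\mathcal{G}) \xrightarrow{\sim} H^1_{\mathrm{tame, ur}}(\mathbb{A}^1_k/k,\mathcal{G})$, all hypotheses of that theorem are met.

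Applying Theorem \ref{curve_tame_equals_cohomology_of_base_field} then gives that the pullback $H^1(k,\mathcal{G}) \to H^1_{\mathrm{ct, ur}}(X/k,\mathcal{G})$ is an isomorphism, and combining this with the identification $H^1_{\mathrm{ct, ur}}(X/k,\mathcal{G}) = H^1_{\mathrm{ur}}(k(X)/k,\mathcal{G})$ from Proposition \ref{proper_equality} yields the claim. The only genuine point requiring care is the comparison $H^S_0(X_K) \cong \CH_0(X_K)$ for proper smooth $X$; this is where I expect the main (albeit standard) work to lie, and I would cite it rather than reprove it, noting that properness is exactly what makes zero-cycles up to Suslin equivalence coincide with zero-cycles up to rational equivalence.
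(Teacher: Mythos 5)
Your proposal is correct and follows essentially the same route as the paper: both reduce the statement to Theorem \ref{curve_tame_equals_cohomology_of_base_field} via the standard identification $H^S_0(X_K)\cong \CH_0(X_K)$ for smooth proper varieties and the equality $H^1_{\mathrm{ct, ur}}(X/k,\mathcal{G}) = H^1_{\mathrm{ur}}(k(X)/k,\mathcal{G})$ from Proposition \ref{proper_equality}. The only difference is cosmetic: you spell out the verification of the hypotheses and supply an external reference for the Suslin homology comparison, whereas the paper simply asserts it.
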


\begin{proof}
Since $X$ is a smooth proper variety, the natural map $H_0^S(X_K) \rightarrow CH_0(X_K)$ is an isomorphism for any field extension $K$. The desired isomorphism follows immediately from Theorem \ref{curve_tame_equals_cohomology_of_base_field} and Proposition \ref{proper_equality}.
\end{proof}

\subsection*{Acknowledgements}

We thank Kay R\"ulling for pointing out a gap in the previous version of this article and for very helpful discussions and email correspondences.  We also thank Federico Binda, Amit Hogadi and Alberto Merici for their helpful comments and discussions.

\end{document}